\def\si{\par\smallskip\noindent}
\def\bi{\par\bigskip\noindent}
\def\pr{\textup{ P\/}}
\def\ex{\textup{E\/}}
\def\eps{\varepsilon}
\def\la{\lambda}
\def\part{\partial}
\newcommand{\beq}{\begin{equation}}
\newcommand{\eeq}{\end{equation}}
\newtheorem{Theorem}{Theorem}[section]
\newtheorem{Lemma}[Theorem]{Lemma}
\newtheorem{Corollary}[Theorem]{Corollary}
\theoremstyle{remark}
\numberwithin{equation}{section}
\date{\today}
\begin{document}

\title[Stable matchings]{On likely solutions of the stable matching problem with unequal numbers of
men and women}

\author{Boris Pittel}
\address{Department of Mathematics, The Ohio State University, Columbus, Ohio 43210, USA}
\email{bgp@math.ohio-state.edu}

\keywords
{stable matching,  random preferences, asymptotics }

\subjclass[2010] {05C30, 05C80, 05C05, 34E05, 60C05}

\begin{abstract}
Following up a recent work by Ashlagi, Kanoria and Leshno, we study a stable matching problem with unequal numbers of men and  women, and independent uniform preferences. The asymptotic formulas for the expected number of stable matchings, and for the probabilities of one point--concentration
for the range of husbands' total ranks and for the range of wives'  total ranks are obtained.
\end{abstract}

\maketitle

\section{Introduction and main results} Consider the set of $n_1$ men and $n_2$ women facing a problem of selecting a marriage partner. For $n_1=n_2=n$,  a marriage $\mathcal M$ is a matching (bijection) between the two sets.  It is assumed that each man and each woman has his/her preferences for a marriage partner, with no ties allowed. That is, there are given $n$ permutations $\sigma_j$ of the men set and $n$ permutations $\rho_j$ of the women set, each $\sigma_j$ ($\rho_j$ resp.) ordering the women set (the men set resp.) according to the desirability degree of a woman
(a man) as a marriage partner for man $j$ (woman $j$). A marriage is called stable if there is
no unmarried pair (a man, a woman) who prefer each other to their respective partners in the marriage. A classic theorem, due to Gale and Shapley \cite{GalSha}, asserts that, given any system of preferences $\{\rho_j,\sigma_j\}_{j\in [n]}$, there exists at least one stable marriage $\mathcal M$. 

The proof of this theorem is algorithmic.
A bijection is constructed in steps such that at each step every man not currently on hold makes
a proposal to his best choice among women who haven't rejected him before, and the chosen woman either provisionally puts the man on hold or rejects him, based on comparison of him to her  current suitor if she has one already. Since a woman who once gets proposed to always has a man on hold
afterwards, after finally many steps every woman has a suitor, and the resulting bijection turns out to be stable. Of course the roles can be reversed, with women proposing and each man selecting  between the current proponent and a woman whose proposal he currently holds, if there is such a woman.
In general, the two resulting matchings, $\mathcal M_1$ and $\mathcal M_2$ are different, one man-optimal, another woman-optimal. ``Man/woman-optimal'' means that each man/woman is matched with the best woman/
man among all his/her stable women/men, i.e. those who are the man's/woman's partner in at least one stable matching. Strikingly, the man-optimal (woman-optimal) stable matching is woman-pessimal (man-pessimal), meaning that every woman (man) is matched to her/his worst stable husband (wife). The interested reader is encouraged to consult Gusfield and Irving \cite{GusIrv} for
a rich, detailed analysis of the algebraic (lattice) structure of stable matchings set, and a collection of proposal algorithms for
determination of stable matchings in between the two extremal matchings $\mathcal M_1$ and
$\mathcal M_2$.

A decade after the Gale-Shapley paper, McVitie and Wilson \cite{McVWil} developed an alternative, sequential, algorithm in which proposals by one side to another are made one at a time. This procedure delivers the same
matching as the Gale-Shapley algorithm; the overall number of proposals made, say by men to
women, is clearly the total rank of the women in the terminal matching. 

This purely combinatorial, numbers-free, description  
begs for a probabilistic analysis of the problem chosen uniformly at random among all the instances, whose total number is $(n!)^{2n}$. Equivalently the $2n$ preference permutations
$s_j$ and $\sigma_j$ are uniform, and independent. In a pioneering paper \cite{Wil} Wilson reduced
the work of the sequential algorithm to a classic urn scheme (coupon-collector problem) and proved that the expected running time, whence the expected total rank of wives in the man-optimal
matching, is at most $nH_n\sim n\log n$, $H_n=\sum_{j=1}^n 1/j$.

Few years later  Knuth \cite{Knu}, among other results,  found a better upper bound $(n-1)H_n+1$, and established a matching lower bound $nH_n-O(\log^4n)$. He also posed a series of open problems, one of them
on the {\it expected\/} number of the stable matchings. Knuth pointed out that an answer might be found via his formula for the probability $P(N)$ that a generic matching $\mathcal M$ is stable:
\begin{equation}\label{Pn=}
P(n)=\overbrace {\idotsint}^{2n}_{\bold x,\,\bold y\in [0,1]^n}\,\prod_{1\le i\neq j\le n}
(1-x_iy_j)\, d\bold x d\bold y.
\end{equation}
(His proof relied on an inclusion-exclusion formula, and interpretation of each summand as the value of a $2n$-dimensional integral, with the integrand equal to the corresponding summand in the expansion of the integrand in \eqref{Pn=}.) And then the expected value of $S(n)$, the total number of stable matchings, would then be determined from $\ex[S(n)]=n! P(n)$. 

Following Don Knuth's suggestion, in \cite{Pit1} we used the equation \eqref{Pn=} to obtain an asymptotic formula 
\begin{equation}\label{Pnsim}
P(n)=(1+o(1))\frac{e^{-1}n\log n}{n!},
\end{equation}
which implied that $\ex[S(n)]\sim e^{-1}n\log n$. We also found the integral formulas  for $P_k(n)$ ($P_{\ell}(n)$ resp.) the 
probability that the generic matching $\mathcal M$ is stable {\it and\/} that the total man-rank $R(\mathcal M)$
(the total woman-rank $Q(\mathcal M)$ is $\ell$ resp.).
These integral formulas implied that with high probability (w.h.p. from now) 
for each stable matching $\mathcal M$  the ranks $R(\mathcal M)$, $Q(\mathcal M)$ are 
between $(1+o(1))n\log n$
and  $(1+o(1))n^2/\log n$.  It followed, with some work, that w.h.p.
$R(\mathcal M_1)\sim n^2/\log n$, $Q(\mathcal M_1)\sim n\log n$ and 
$R(\mathcal M_2)\sim n\log n$, $Q(\mathcal M_2)\sim n^2/\log n$. In particular, w.h.p. 
$R(\mathcal M_j)Q(\mathcal M_j)$ $\sim n^3$, ($j=1,2$).  In a joint paper with Knuth and Motwani
\cite{KnuMotPit} we used a novel extension of Wilson's proposal algorithm to show that every woman w.h.p.
has at least $(1/2-o(1))\log n$ stable husbands.

Spurred by these results, in \cite{Pit2} we studied the likely behavior of the full random set $\{(R(\mathcal M),Q(\mathcal M))\}$, where $\mathcal M$ runs through all stable matchings for the random instance of $\{\rho_j,\sigma_j\}_{j\in [n]}$. The key ingredient was the more general formula
for $P_{k,\ell}(n)$, the probability that the generic matching $\mathcal M$ is stable and $Q(\mathcal M)=k$, $R(\mathcal M)=\ell$. We also showed that, for a generic woman, the number of stable husbands 
is  normal in the limit, with mean and variance asymptotic to $\log n$.

The key element of the proofs of the integral representations for these probabilities, which also imply
the Knuth formula \eqref{Pn=}, was a refined, background, probability space. Its sample point is a pair of two
$n\times n$ matrices $\bold X=\{X_{i,j}\}$, $\bold Y=\{Y_{i,j}\}$ with all $2n^2$  entries being independent, $[0,1]$-uniform random variables. Reading each row of $\bold X$ and each column of $\bold Y$ in increasing order we recover the independent, uniform preferences of each of $n$
men and of each of $n$ women respectively. And, for instance, the integrand in \eqref{Pn=} turns out equal to the
probability that a generic matching $M$ is stable, {\it conditioned\/} on the values $x_i=X_{i,M(i)}$,
$y_j=Y_{M^{-1}(j),j}$

Using the formula for $P_{k,\ell}(n)$,
we proved a {\it law of hyperbola\/}:
for every $\la\in (0,1/4)$,  quite surely (q.s) $\max_{\mathcal M}|n^{-3}Q(\mathcal M)R(\mathcal M)-1|\le n^{-\la}$;  ``quite surely'' means with probability $1-O(n^{-K})$, for every $K$, a notion
introduced in Knuth, Motwani and Pittel \cite{KnuMotPit}. 

Moreover, q.s. every point on the hyperbolic arc $\{(u,v):\, uv=1;\,u,v\in [n^{-\la},n^{\la}]\}$ is within distance $n^{-(1/4-\la)}$ from
$n^{-3/2}(Q(\mathcal M), R(\mathcal M))$ for some stable matching $\mathcal M$. In particular, q.s. $S(n)\ge n^{1/2-o(1)}$, a significant  improvement of  the logarithmic bound in \cite{KnuMotPit}, but still far below $n\log n$, the asymptotic order of $\ex[S_n]$.

Thus, for a large number of participants,  a typical instance of the preferences $\{\rho_j,\sigma_j\}_{j\in [n]}$ has multiple stable
matchings very nearly obeying the preservation law for the product of the total man-rank and
the total woman-rank. In a way this law is not unlike thermodynamic laws in physics of gases. However those laws are usually of phenomenological nature, while the product law is a rigorous
corollary of the {\it local\/} stability conditions for the random instance of the preferences
$\{\rho_j,\sigma_j\}_{j\in [n]}$.
 
The hyperbola law implied that w.h.p. the minimum value of  $R(\mathcal M)+Q(\mathcal M)$
(by definition attained at an egalitarian marriage $\mathcal M_3$) is asymptotic to $2n^{3/2}$, 
and the worst spouse rank in $\mathcal M_3$ is of order $n^{1/2}\log n$, while the worst spouse
rank  in the
extremal $\mathcal M_1$ and $\mathcal M_2$ is much larger, of order $n/\log n$.

Recently Lennon and Pittel \cite{LenPit} extended the techniques in \cite{Pit1}, \cite{Pit2} to
show that $\ex[S(n)^2]\sim (e^{-2}+0.5e^{-3})(n\log n)^2$. Combined with \eqref{Pnsim}, this
result implied that $S(n)$ is of order $n\log n$ with probability $0.84$, at least. Jointly with Shepp
and Veklerov \cite{Pit3} we proved that, for a fixed $k$, the expected number of women with $k$ stable husbands is asymptotic to $(\log n)^{k+1}/$ $(k-1)!$.

We hope the reader shares our view that the case of the uniform preferences turned out to be  surprisingly
amenable to the asymptotic analysis, and as such it can serve a benchmark for more general models that might be closer to ``real-life'' situations.   

In this paper we will consider a matching model with sets of men and women of different 
cardinalities $n_1$ and $n_2$, say $n_1<n_2$. In this case Gusfield and Iriving \cite{GusIrv}
defined a stable matching as an {\it injection\/} $\mathcal M: [n_1]\to [n_2]$ such that there is {\it no\/}
unmatched pair $(m,w)$, ($m\in [n_1]$, $w\in [n_2]$), meeting a condition:

\si $m$ prefers $w$ to his partner in $\mathcal M$, {\it and\/} if $w$ is matched in $\mathcal M$ then $w$
prefers $m$ to her partner in $\mathcal M$.

\si It was demonstrated in \cite{GusIrv} that, for any preference lists, at least one stable matching 
(injection) exists, and the women set $[n_2]$ is partitioned into two subsets $A_1$ and $A_2$, ($|A_1|=n_1$, $|A_2|=n_2-n_1$) such that 
 the women from $A_1$ are matched in {\it all\/} stable matchings, and the women from $A_2$  in {\it none\/}.  
 
Few years ago Ashlagi, Kanoria and Leshno \cite{AshKanLes} (see Online Appendices A, B 
and C for the proofs) discovered that the mere positivity
of $n_2-n_1$ drastically changes the likely structure of the stable matchings. Let $R(\mathcal M)$ and
$Q(\mathcal M)$ continue to stand for the total rank of husbands and the total rank of wives in a stable matching
$\mathcal M$. Their main result states:

\begin{Theorem}\label{1} (Ashlagi, Kanoria, Leshno, (AKL)) Let $n_1,\,n_2\to\infty$ and $n_2-n_1>0$. For every $\eps>0$, w.h.p. (1) for every two stable matchings $M$ and $M'$ both
$R(M)/R(M')$ and $Q(M)/Q(M')$ are $1+O(\eps)$, uniformly over $M$ and $M'$; (2) denoting
$s(\bold n)=\log\tfrac{n_2}{n_2-n_1}$,
\[
\max_{\mathcal M}Q(\mathcal M)\le (1+\eps) n_2s(\bold n),\quad \min_{\mathcal M}R(M)\ge
\frac{n_1^2}{1+(1+\eps)\tfrac{n_2}{n_1}s(\bold n)}.
\]
(3) the fraction of men and the fraction of women who have multiple stable partners are each no more than $\eps$.
\end{Theorem}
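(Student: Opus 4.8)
The plan is to reduce all three assertions to the analysis of the two extremal stable matchings and then to control those two. Write $\mathcal M_1$ for the man-optimal (woman-pessimal) stable matching and $\mathcal M_2$ for the woman-optimal (man-pessimal) one, both bijections $[n_1]\to A_1$. Since for every stable $\mathcal M$ one has $Q(\mathcal M_1)\le Q(\mathcal M)\le Q(\mathcal M_2)$ and $R(\mathcal M_2)\le R(\mathcal M)\le R(\mathcal M_1)$, part~(1) is the statement that $Q(\mathcal M_2)/Q(\mathcal M_1)$ and $R(\mathcal M_1)/R(\mathcal M_2)$ are $1+O(\eps)$ w.h.p.; part~(2) asks for an upper bound on $Q(\mathcal M_2)$ and a lower bound on $R(\mathcal M_2)$; and part~(3) follows once $\mathcal M_1$ and $\mathcal M_2$ are shown to agree on all but $\eps n_1$ men and all but $\eps n_2$ women, because a man $m$ has more than one stable partner exactly when $\mathcal M_1(m)\ne\mathcal M_2(m)$, and a woman $w\in A_1$ has more than one stable husband exactly when $\mathcal M_1^{-1}(w)\ne\mathcal M_2^{-1}(w)$ (women in $A_2$ have none). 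In regimes where $Q(\mathcal M_2)-Q(\mathcal M_1)=o(n_1)$ the men's half of this agreement is automatic, since $\#\{m:\mathcal M_1(m)\ne\mathcal M_2(m)\}\le Q(\mathcal M_2)-Q(\mathcal M_1)$ (each discrepant $m$ contributes a positive integer to a nonnegative sum), and similarly $\#\{w\in A_1:\mathcal M_1^{-1}(w)\ne\mathcal M_2^{-1}(w)\}\le R(\mathcal M_1)-R(\mathcal M_2)$; in general the agreement statement is delivered by the rotation argument below. So the quantitative core to establish is
\[
Q(\mathcal M_1)\sim Q(\mathcal M_2)\sim n_2\,s(\bold n),\qquad R(\mathcal M_1)\sim R(\mathcal M_2)\sim \frac{n_1^2}{1+(n_2/n_1)s(\bold n)}\quad\text{w.h.p.}
\]

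The transparent half is the bound $Q(\mathcal M_1)\le(1+\eps)\,n_2 s(\bold n)$. Run the man-proposing McVitie--Wilson sequential algorithm on the background matrices $(\bold X,\bold Y)$. By the principle of deferred decisions each proposal is addressed to a woman uniform among those who have not yet rejected the proposer, and an engaged woman stays engaged with a weakly improving partner; hence the number of currently matched men always equals the number of distinct women who have so far received a proposal, and the algorithm halts exactly when this number reaches $n_1$. Coupling with Wilson's amnesiac variant, in which men may re-propose to women who rejected them (a pure waste), $Q(\mathcal M_1)$ is stochastically at most the coupon-collector time to collect $n_1$ of $n_2$ coupons, whose mean is $n_2(H_{n_2}-H_{n_2-n_1})=(1+o(1))\,n_2 s(\bold n)$ and which concentrates; the matching lower bound $Q(\mathcal M_1)\ge(1-\eps)\,n_2 s(\bold n)$ follows by estimating the amnesiac waste as in Knuth's sharpening of Wilson's bound. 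For $R(\mathcal M_1)$, each $w\in A_1$ receives $\tau_w\ge1$ proposals with $\sum_{w\in A_1}\tau_w=Q(\mathcal M_1)$, and, again by deferred decisions, the rank of her husband on her list behaves like the minimum of about $\tau_w$ nearly-uniform draws from $[n_1]$, so $R(\mathcal M_1)\approx\sum_{w\in A_1}n_1/(\tau_w+1)$; convexity of $t\mapsto 1/t$ together with $|A_1|=n_1$ gives the lower bound of the asserted order, and a concentration estimate for the $\tau_w$ the matching upper bound. This already exhibits the ``stark effect'': when $n_2/n_1$ is bounded away from $1$, the typical man makes only $O(1)$ proposals.

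The hard half is the matching control of the other extreme $\mathcal M_2$, i.e.\ the thinness of the stable lattice, and I would attack it by a first-moment computation in the spirit of \cite{Pit1,Pit2}. Using the background space one obtains an integral analogue of Knuth's formula \eqref{Pn=},
\[
\E\,[S(\bold n)]=\frac{n_2!}{(n_2-n_1)!}\idotsint_{\bold x,\bold y\in[0,1]^{n_1}}\prod_{i=1}^{n_1}(1-x_i)^{n_2-n_1}\!\!\prod_{1\le i\ne j\le n_1}\!\!(1-x_iy_j)\,d\bold x\,d\bold y,
\]
together with its refinement carrying the events $\{Q(\mathcal M)=k\}$ and $\{R(\mathcal M)=\ell\}$ (conditionally on the chosen matching, $\mathrm{rank}_m(\mathcal M(m))-1\sim\mathrm{Bin}(n_2-1,x_m)$ and $\mathrm{rank}_w(\mathcal M^{-1}(w))-1\sim\mathrm{Bin}(n_1-1,y_w)$). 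The new feature relative to the balanced case is the factor $\prod_i(1-x_i)^{n_2-n_1}$, which forces the Laplace saddle to $x_i\asymp 1/(n_2-n_1)$ rather than $x_i\asymp(\log n)/n$; carrying the analysis through, one expects the mass to localize at a single pair $(k^{\ast},\ell^{\ast})$ with $k^{\ast}\sim n_2 s(\bold n)$ and $\ell^{\ast}\sim n_1^2/(1+(n_2/n_1)s(\bold n))$, so that w.h.p.\ every stable matching has $(Q,R)$ within a $1+o(1)$ factor of $(k^{\ast},\ell^{\ast})$; this delivers parts (1) and (2). For part (3) in full generality (in particular when $n_2-n_1=o(n_1)$, where the ``additive gap'' bound of the first paragraph is too weak, especially on the women's side) one adds the rotation argument of Ashlagi--Kanoria--Leshno and Gusfield--Irving: starting from $\mathcal M_1$, every stable matching is reached by eliminating rotations, so the number of agents with more than one stable partner is at most the total size of all rotations, and a union bound over the possible small rotations --- each demanding a rare, pinned configuration of the entries of $(\bold X,\bold Y)$ near their $\mathcal M_1$-values --- shows that w.h.p.\ this total size is at most $\eps n_1$.

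I expect the main obstacle to be precisely this last step --- bounding how far the lattice extends above $\mathcal M_1$. In the first-moment route the difficulty is the two-dimensional Laplace analysis of the $2n_1$-fold integral, which must be made uniform across the whole range $1<n_2/n_1$: the ``almost balanced'' regime $n_2/n_1\to1$, where $s(\bold n)\to\infty$ and the balanced-case asymptotics are recovered, and the ``genuinely unbalanced'' regime, where $s(\bold n)=\Theta(1)$ and the saddle has a completely different shape; moreover first moments, while they pin down $(Q,R)$, do not by themselves yield the agent-level statement~(3), for which either the rotation enumeration or a second-moment/structural refinement is needed. Once the displayed quantitative core and the agreement between $\mathcal M_1$ and $\mathcal M_2$ on all but $\eps n_1$ men and $\eps n_2$ women are in hand, parts (1)--(3) follow as indicated, with ``w.h.p.'' in the stated form supplied by standard concentration inequalities (or direct coupon-collector tail estimates) for the quantities involved.
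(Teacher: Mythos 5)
First, a point of orientation: the paper does not prove Theorem \ref{1} at all --- it is quoted from Ashlagi, Kanoria and Leshno, whose proof lives in their online appendices and proceeds by an analysis of the proposal algorithm. What the paper proves are quantified analogues (Theorems \ref{Q(M)appr}, \ref{R(M)appr}, \ref{frctns}) by exactly the route you sketch: the integral identities of Lemma \ref{extbasic}, Chernoff/Laplace analysis of $P_k(\bold n)$ and $P_\ell(\bold n)$, and a first-moment bound over Irving--Leather rotations. So your plan reproduces this paper's program rather than the cited proof, and it inherits its limitations, which is where the genuine gap lies. For part (3) your only tool in the regime $n_2-n_1=o(n_1)$ is the ``union bound over rotations.'' The paper carries out precisely this computation (Lemma \ref{P(nr)=}, Corollary \ref{P(n,r)<,1}) and obtains $\ex[\mathcal R(\bold n)]=O\bigl(n_1^{3/2}/(s(\bold n)(n_2-n_1))\bigr)$ for the expected total rotation length, whence a bound $O\bigl(n_1^{1/2}/(s(\bold n)(n_2-n_1))\bigr)$ on the fraction of agents with several stable partners; this is $o(1)$ only when $n_2-n_1\gtrsim n_1^{1/2}(\log n_1)^{-\gamma}$, which is exactly the hypothesis of Theorem \ref{frctns}, and the author states explicitly that pushing this approach down to $n_2-n_1\ge 1$ is problematic. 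For $n_2-n_1=1$ the first moment is of order $n_1^{3/2}/\log n_1\gg\eps n_1$, so the union bound simply fails. Your fallback via the additive gap does not rescue this: the concentration you would extract from the first-moment analysis localizes $Q(\mathcal M)$ only to a window of width $\delta(\bold n)\,n_2 s(\bold n)$, which is $\gg n_1$ when $n_2-n_1$ is small, so $Q(\mathcal M_2)-Q(\mathcal M_1)=o(n_1)$ is not available. Hence part (3), in the full range $n_2-n_1>0$ asserted by the theorem, is not delivered by your argument; AKL's own proof uses different (algorithmic/structural) machinery.

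Two further, smaller issues. Your ``quantitative core'' for $R$ is misstated: the actual concentration point is $n_1^2 f(s(\bold n))$ with $f(x)=\tfrac{e^x-1-x}{x(e^x-1)}$ (Theorem \ref{R(M)appr}), and this coincides with $n_1^2/(1+(n_2/n_1)s(\bold n))$ only in the extremes $s(\bold n)\to\infty$ and $s(\bold n)\to 0$; e.g.\ for $n_2=2n_1$ one has $f(\log 2)\approx 0.443$ versus $1/(1+2\log 2)\approx 0.419$, so the claimed asymptotic equality is false in mid-range regimes (it costs you nothing for AKL's one-sided bound in part (2), but it is not what the first-moment analysis localizes at). Second, the Laplace/Chernoff analysis that is the technical heart of the argument is only gestured at (``one expects the mass to localize''), and when carried out in this paper the $R$-side required the unexpected restriction $n_2\le n_1^{3/2-d}$; for the AKL statement this can be bypassed because when $n_2/n_1\to\infty$ one has $\ex[S(\bold n)]\to 1$, hence $S(\bold n)=1$ w.h.p.\ and parts (1), (3) become vacuous, but your sketch neither notices the obstruction nor the shortcut. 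In summary: the reduction to extremal matchings, the coupon-collector control of $Q(\mathcal M_1)$, and the first-moment/rotation framework are all sound and match this paper's methodology, but as a proof of Theorem \ref{1} the proposal has a real hole at part (3) for small $n_2-n_1$, and its stated asymptotics for $R$ are incorrect.
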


The contrast with the case $n_1=n_2=n$ is stark indeed. There w.h.p. a generic member  has
$(1+o_p(1))\log n$ stable partners  (see \cite{KnuMotPit}, \cite{Pit2}), and the ratios $R(\mathcal M)/R(\mathcal M')$,
$Q(\mathcal M)/Q(\mathcal M')$ reach the values asymptotic to $n/(\log n)^2$ and $(\log n)^2/n$, \cite{Pit2}. As stressed in \cite{AshKanLes}, Theorem \ref{1} implies that, as long as the focus is on the 
global parameters $R(\mathcal M)$ and $Q(\mathcal M)$ (``centralized markets''),  the likely dependence
on which side proposes {\it almost\/} vanishes. In the outline, I learned of Theorem \ref{1} first from Jennifer Chayes \cite{Cha}, and later, with more details, from Gil Kalai's blog \cite{Kal}. 

As a promising sign, the  basic integral identities for the probabilities $P(n)$ (Knuth \cite{Knu}), $P_k(n)$ and
$P_{\ell}(n)$ (\cite {Pit1}, \cite{Pit2}) for the ``$n_1=n_2=n$''  have natural counterparts for the ``unbalanced'' probabilities $P(\bold n)$,
$P_k(\bold n)$ and $P_{\ell}(\bold n)$, $\bold n:=(n_1,n_2)$. Here $P(\bold n)$ is
the probability that a generic injection $\mathcal M: [n_1]\to [n_2]$ is stable;
(2) $P_k(\bold n)$ ($P_{\ell}  (\bold n)$ resp.) is the probability that a generic injection $\mathcal M: [n_1]\to [n_2]$ is stable and  $Q(\mathcal M)=k$, ($R(\mathcal M)=\ell$ resp.). All three formulas are implied by the integral formula
for $P_{k,\ell}(\bold n)$, the probability that $\mathcal M$ is stable and $Q(\mathcal M)=k$, $R(\mathcal M)=\ell$. 
\vfill
\begin{Lemma}\label{extbasic}
\begin{align*}
P(\bold n)&=\!\overbrace {\idotsint}^{2n_1}_{\bold x,\,\bold y\in [0,1]^{n_1}}\prod_{ i\neq j}(1-x_iy_j)\!\prod_h\bar x_h^{n_2-n_1}\,d\bold x d\bold y,\\
P_k(\bold n)&=\!\overbrace {\idotsint}^{2n_1}_{\bold x,\,\bold y\in [0,1]^{n_1}}\!\!
[\xi^{k-n_1}] \prod_{i\neq j}\bigl(1-x_i(1-\xi+\xi y_j)\bigr)\!\prod_h\bar x_h^{n_2-n_1}\,d\bold x d\bold y,\\
P_{\ell}(\bold n)&=\!\overbrace {\idotsint}^{2n_1}_{\bold x,\,\bold y\in [0,1]^{n_1}}\!\!
[\eta^{\ell-n_1}]\prod_{i\neq j}\bigl(1-y_j(1-\eta+\eta x_i)\bigr)\!\prod_h \bar x_h^{n_2-n_1}\,d\bold x d\bold y,\\
P_{k,\ell}(\bold n)&=\!\overbrace {\idotsint}^{2n_1}_{\bold x,\,\bold y\in [0,1]^{n_1}}\![\xi^{k-n_1}\eta^{\ell-n_1}]
\prod_{ i\neq j}(\bar x_i\bar y_j+x_i\bar y_j\xi+\bar x_iy_j\eta)
\prod_h\bar x_h^{n_2-n_1}\,d\bold x d\bold y;
\end{align*}
here $i,\,j,\,h\in [n_1]$ and $\bar x_i=1-x_i$, $\bar y_j=1-y_j$.  
\end{Lemma}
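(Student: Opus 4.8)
The plan is to set up the "refined background probability space" described in the introduction and compute the stability probability by conditioning on the values of the matrix entries along the matching. Fix the generic injection to be $\mathcal M(i)=i$ for $i\in[n_1]$, so the matched women are $\{1,\dots,n_1\}$ and the women $\{n_1+1,\dots,n_2\}$ are unmatched. Let $X_{i,j}$ (men's scores for women) and $Y_{i,j}$ (women's scores for men) be independent $[0,1]$-uniform; man $i$ prefers smaller $X_{i,\cdot}$, woman $j$ prefers smaller $Y_{\cdot,j}$. Set $x_i=X_{i,i}$ and $y_j=Y_{j,j}$. The key observation is that, conditioned on $\bold x=(x_1,\dots,x_{n_1})$ and $\bold y=(y_1,\dots,y_{n_1})$, the remaining relevant entries are mutually independent uniforms, so the conditional probability of stability factorizes; then one integrates over $\bold x,\bold y$.

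Next I would translate the Gusfield--Irving stability condition into independent events. A blocking pair would be either (a) a matched woman $j\in[n_1]$ together with some man $i\ne j$ with $X_{i,j}<x_i$ (man $i$ prefers $j$ to his wife $i$) and $Y_{i,j}<y_j$ (woman $j$ prefers $i$ to her husband $j$); or (b) an unmatched woman $h\in\{n_1+1,\dots,n_2\}$ together with some man $i\in[n_1]$ with $X_{i,h}<x_i$ (man $i$ prefers the unmatched $h$ to his wife; no condition on $h$'s side since she is unmatched). For a fixed matched pair $(i,j)$, $i\ne j$, the probability of \emph{no} block there is $\Pr(X_{i,j}\ge x_i \text{ or } Y_{i,j}\ge y_j)=1-x_iy_j$; these events are independent across the $n_1(n_1-1)$ ordered pairs $(i,j)$ with $i\ne j$ because they involve disjoint sets of matrix entries. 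For a fixed unmatched woman $h$ and man $i$, the probability of no block is $\Pr(X_{i,h}\ge x_i)=\bar x_i$; over all $n_2-n_1$ unmatched women and, for each, over all $i\in[n_1]$, these contribute $\prod_h\bar x_h^{\,n_2-n_1}$ — this is exactly the extra factor distinguishing the unbalanced case. Multiplying and integrating over $\bold x,\bold y\in[0,1]^{n_1}$ gives the formula for $P(\bold n)$.

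For the refined probabilities I would introduce generating-function markers tracking the ranks. The woman-rank $Q(\mathcal M)=\sum_j q_j$, where $q_j$ is the rank of husband $j$ in woman $j$'s list, i.e. $1$ plus the number of men $i$ she prefers to $j$; given $y_j$, such a man contributes precisely when $Y_{i,j}<y_j$, an event of probability $y_j$, independently in $i$. So, attaching a marker $\xi$ to each such inversion, the no-block-and-rank-bookkeeping weight for the pair $(i,j)$ becomes $\bar x_i + x_i\bigl(\bar y_j + y_j\xi\bigr) = 1 - x_i(1-\xi+\xi y_j)$: the term $\bar x_i$ is "man $i$ does not prefer $j$ at all", $x_i\bar y_j$ is "man $i$ prefers $j$ but $j$ does not prefer $i$" (no block, no inversion), and $x_iy_j\xi$ is "both prefer each other" — but wait, that would be a blocking pair; the resolution is that the marker algebra is set up so that the \emph{coefficient extraction} $[\xi^{k-n_1}]$ simultaneously enforces stability and counts, because the genuinely forbidden configuration $x_iy_j$ (an actual block) is the one carrying $\xi$, and it is only the contribution where \emph{no} pair is blocking that survives with total $\xi$-degree equal to the number of inversions; more precisely one checks that the $\xi$-degree equals $Q(\mathcal M)-n_1$ exactly on stable matchings and the non-stable ones get killed. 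A parallel $\eta$-marker on men's inversions gives $P_\ell(\bold n)$, and running both markers simultaneously gives the joint weight $\bar x_i\bar y_j + x_i\bar y_j\xi + \bar x_i y_j\eta$ for each pair $(i,j)$, yielding $P_{k,\ell}(\bold n)$ after $[\xi^{k-n_1}\eta^{\ell-n_1}]$; the unmatched-women factor $\prod_h\bar x_h^{\,n_2-n_1}$ is unaffected since unmatched women contribute nothing to either rank.

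The step I expect to be the main obstacle is the precise justification that coefficient extraction in $\xi$ (and $\eta$) correctly encodes stability rather than merely bookkeeping ranks of \emph{some} matching: one must argue that on any non-stable matching the polynomial's contribution cancels or falls outside the targeted coefficient, and that on a stable matching the $\xi$-degree is forced to be $Q(\mathcal M)-n_1$. This is the same subtlety present in \cite{Pit1,Pit2} for the balanced case, and I would handle it by the same device — rewriting $1-x_i(1-\xi+\xi y_j)=\bar x_i+x_i\bar y_j\cdot 1+x_iy_j\xi$ and observing that a pair $(i,j)$ contributing the $\xi$-term is exactly a "potential block that is neutralized because woman $j$'s actual husband $j$ is preferred less than $i$ only after the stability surgery"; the clean combinatorial statement is that the product expands into a sum over subsets of ordered pairs, each subset corresponding to a consistent assignment, and the extreme coefficient picks out the unique stable one with its correct rank. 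Once that bijective/cancellation bookkeeping is pinned down, the integration and independence arguments are routine, and all four displayed identities follow at once from the single master computation for $P_{k,\ell}(\bold n)$ by specialization ($\xi=1$ and/or $\eta=1$, then summing over $k$ and/or $\ell$).
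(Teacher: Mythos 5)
Your derivation of the first identity is correct and is exactly the paper's argument: fix $\mathcal M(i)=i$, condition on the diagonal values $x_i=X_{i,i}$, $y_j=Y_{j,j}$, note that the no-blocking events attached to distinct pairs involve disjoint matrix entries and hence are conditionally independent, multiply the factors $1-x_iy_j$ (matched women) and $\bar x_i$ for each (man $i$, unmatched woman) pair, and integrate over the cube.

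The rank-refined formulas are where your proposal genuinely breaks down, and the breakdown is concrete: the identity you rely on, $\bar x_i+x_i(\bar y_j+y_j\xi)=1-x_i(1-\xi+\xi y_j)$, is false. The left side equals $1-x_iy_j(1-\xi)$, whereas the factor in the lemma expands as $1-x_i(1-\xi+\xi y_j)=\bar x_i+\xi\,x_i\bar y_j$ (equivalently, put $\eta=1$ in $\bar x_i\bar y_j+x_i\bar y_j\xi+\bar x_iy_j\eta$); the same slip recurs in your closing paragraph, where you again rewrite this factor as $\bar x_i+x_i\bar y_j+x_iy_j\xi$. This error leads you to attach the marker $\xi$ to the mutual-preference configuration $x_iy_j$ -- i.e.\ to a blocking pair -- and then to hope that stability is enforced by some cancellation under the extraction $[\xi^{k-n_1}]$. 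No such mechanism exists (with your factor, $[\xi^m]$ of the product would give the probability of exactly $m$ blocking pairs, which has nothing to do with $P_k(\bold n)$), and none is needed. The correct bookkeeping, which is the paper's, is: for each ordered pair $(i,j)$, $i\neq j\le n_1$, the four configurations of $(X_{i,j}\lessgtr x_i,\,Y_{i,j}\lessgtr y_j)$ have conditional probabilities $\bar x_i\bar y_j$, $x_i\bar y_j$, $\bar x_iy_j$, $x_iy_j$; stability is enforced pointwise by giving the last configuration weight $0$ in the per-pair factor, while $\xi$ marks $x_i\bar y_j$ (man $i$ prefers $j$ to his wife, counted in $Q(\mathcal M)-n_1$) and $\eta$ marks $\bar x_iy_j$ (woman $j$ prefers $i$ to her husband, counted in $R(\mathcal M)-n_1$). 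By conditional independence, $\ex\bigl[1_{\{\mathcal M\text{ stable}\}}\xi^{Q(\mathcal M)-n_1}\eta^{R(\mathcal M)-n_1}\mid\bold x,\bold y\bigr]$ equals $\prod_{i\neq j}(\bar x_i\bar y_j+x_i\bar y_j\xi+\bar x_iy_j\eta)\prod_h\bar x_h^{n_2-n_1}$ -- here one also uses that on the stable event every woman a man prefers to his wife lies among the matched women, so the unmatched women only contribute the unchanged factor $\bar x_h^{n_2-n_1}$ -- and then coefficient extraction plus Fubini gives $P_{k,\ell}(\bold n)$, with the other three identities obtained by setting $\xi$ and/or $\eta$ equal to $1$. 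In short, the ``main obstacle'' you flag (showing non-stable matchings are killed by coefficient extraction) is a phantom created by the algebra error; once the markers are attached to the correct one-sided-preference configurations there is nothing to cancel, and the rest of the argument is the routine conditioning you already set up.
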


Thus the condition $n_1<n_2$  leads to insertion of the extra factor $\prod_h\bar x_h^{n_2-n_1}$ into
the corresponding integrands for $P(n)$, $P_k(n)$, $P_{\ell}(n)$, and $P_{k,\ell}(n)$ for the
the ``$n_1=n_2=n$'' case in \cite{Pit2}.\\

Using the formula for $P(\bold n)$, and the fact that  
$\ex[S(\bold n)]=\binom{n_2}{n_1}n_1!\,P(\bold n)$, we will prove
\begin{Theorem}\label{E(S(n))sim} If $n_2>n_1\to\infty$ then
\[
\ex[S(\bold n)]\sim \frac{n_1\exp\Bigl(-\tfrac{e^{s(\bold n)}-1-s(\bold n)}{e^{s(\bold n)}-1}\Bigr)}{(n_2-n_1)s(\bold n)},\qquad s(\bold n):=\log\frac{n_2}{n_2-n_1}.
\]
Consequently, if $n_2\gg n_2-n_1>0$, i.e. $n_2/n_1\to 1$ and $s(\bold n)\to\infty$,  then
\[
\ex\bigl[S(\bold n)] \sim e^{-1}\frac{n_1}{(n_2-n_1)\log n_1},
\]
and if $\tfrac{n_2}{n_1}\to\infty$, then $\ex[S(\bold n)]\to 1$. Finally if  $\lim \tfrac{n_2}{n_1}=c>1$ is finite, then
\[
\lim \ex\bigl[S(\bold n)\bigr]=e^{-1}\frac{\lambda(c)}{\log\lambda(c)},\quad \lambda(c)=\bigl(\tfrac{c}{c-1}\bigr)^{c-1}.
\]
\end{Theorem}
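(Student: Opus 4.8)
The plan is to feed the integral identity for $P(\bold n)$ from Lemma~\ref{extbasic} into $\ex[S(\bold n)]=\binom{n_2}{n_1}n_1!\,P(\bold n)$ and to evaluate the resulting integral by Laplace's method. Write $m=n_1$, $\mu=n_2-n_1$, $s=s(\bold n)$, and $g(a)=(1-e^{-a})/a$. Grouping the factors of $\prod_{i\neq j}(1-x_iy_j)$ according to the second index $j$ and integrating out each $y_j$ separately gives
\[
P(\bold n)=\int_{[0,1]^{m}}J(\bold x)\,\prod_{h}(1-x_h)^{\mu}\,d\bold x,\qquad
J(\bold x):=\prod_{j=1}^{m}\int_0^1\prod_{i\neq j}(1-x_iy)\,dy .
\]
The weight $\prod_h(1-x_h)^{\mu}$ pushes the coordinates of $\bold x$ to be small. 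Setting $a:=\sum_i x_i$ and letting $\varphi_m$ be the density of a sum of $m$ independent $[0,1]$-uniforms (so $\varphi_m(a)\le a^{m-1}/(m-1)!$, with equality up to a factor $1+o(1)$ for $a=O(\log m)$), one checks that the $a$-marginal of the integrand is concentrated near $a=s$, i.e.\ near the point where $e^{s}-1=m/\mu$. For $a$ in that range the conditional law of $\bold x$ given $\sum_i x_i=a$ is, off a negligible set, uniform on the simplex $\{\sum x_i=a\}$, whence $\sigma_2:=\sum_i x_i^2$ concentrates near $2a^2/m\to0$ and the higher power sums are smaller still.

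The heart of the argument is to expand $J(\bold x)$ and $\prod_h(1-x_h)^{\mu}$ accurately enough to pin down their $\Theta(1)$ prefactors, since each is a product of $m$ factors whose individual $O(1/m)$ corrections accumulate. Using $\sum_i\log(1-x_iy)=-ay-\tfrac{y^{2}}{2}\sigma_2+O((\log m)^{3}/m^{2})$ and $(1-x_jy)^{-1}=1+x_jy+O((\log m)^{2}/m^{2})$ in the $j$-th factor of $J(\bold x)$ gives
\[
\int_0^1\prod_{i\neq j}(1-x_iy)\,dy=g(a)+x_j\,g_1(a)-\tfrac{\sigma_2}{2}\,g_2(a)+(\text{smaller}),\qquad g_1:=-g',\ g_2:=g'' ,
\]
and multiplying over $j$ (with $\sigma_2$ replaced by its conditional mean $\sim2a^2/m$, whose fluctuations cost only $o(1)$ in the exponent) yields $J(\bold x)\sim g(a)^{m}\exp\!\bigl(\tfrac{a g_1(a)}{g(a)}-\tfrac{a^{2}g_2(a)}{g(a)}\bigr)$; likewise $\prod_h(1-x_h)^{\mu}=\exp\!\bigl(\mu\sum_h\log(1-x_h)\bigr)\sim e^{-\mu a}\exp(-\mu a^{2}/m)$. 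With the explicit $g$ the combined correction exponent is
\[
C(a):=\frac{a g_1(a)}{g(a)}-\frac{a^{2}g_2(a)}{g(a)}-\frac{\mu a^{2}}{m}=-1+\frac{a(1+a)}{e^{a}-1}-\frac{\mu a^{2}}{m},
\]
which at $a=s$ equals exactly $-1+\tfrac{s\mu}{m}=-\tfrac{e^{s}-1-s}{e^{s}-1}$. Since $g(a)^{m}a^{m-1}=(1-e^{-a})^{m}/a$, integrating against $\varphi_m(a)\,da$ and using $\varphi_m(a)\sim a^{m-1}/(m-1)!$ gives
\[
(m-1)!\,P(\bold n)\sim\int_0^{\infty}e^{\Psi(a)}\,da,\qquad \Psi(a):=m\log(1-e^{-a})-\mu a-\log a+C(a).
\]
I expect this to be the main obstacle: making the product-of-$m$ expansions rigorous and uniform over the relevant range of $a$, controlling the conditional concentration of $\sigma_2$ finely enough that $e^{C(a)}$ (rather than merely $e^{O(1)}$) is the correct prefactor, and disposing of the tails $a\notin[\tfrac12 s,2s]$ (e.g.\ via $J(\bold x)\le g(a-1)^{m}$ and $\varphi_m(a)\le a^{m-1}/(m-1)!$). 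This is the counterpart, with the extra $(1-x_h)^{\mu}$ weight, of the delicate estimates in \cite{Pit1,Pit2}.

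It then remains to evaluate $\int_0^{\infty}e^{\Psi(a)}\,da$ by Laplace's method about its maximizer $a^{*}=s+o(s)$. Since $C'=O(1)$ while $\tfrac{d}{da}\bigl(m\log(1-e^{-a})-\mu a\bigr)$ is of larger order near $s$, the correction $C$ does not move the maximizer to leading order, and $|\Psi''(s)|\sim n_2(n_2-n_1)/n_1$. If $\mu\to\infty$ the peak is Gaussian, contributing a factor $\sqrt{2\pi/|\Psi''(s)|}$; if $\mu$ stays bounded the peak is ``fat'', and the substitution $u=\mu e^{-(a-s)}$ turns the local integral into a Gamma integral of order $\Gamma(\mu)/\mu^{\mu}$. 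Either way $(m-1)!\,P(\bold n)\sim\bigl(\tfrac{n_1}{n_2}\bigr)^{n_1}\bigl(\tfrac{n_2-n_1}{n_2}\bigr)^{n_2-n_1}\tfrac{e^{C(s)}}{s}\cdot(\text{local factor})$, using $1-e^{-s}=n_1/n_2$ and $e^{-s}=(n_2-n_1)/n_2$. Multiplying by $\binom{n_2}{n_1}n_1!/(m-1)!=n_1\binom{n_2}{n_1}$ and applying Stirling, all the powers of $n_1,n_2,n_2-n_1$ together with the local factor cancel, leaving
\[
\ex[S(\bold n)]\sim\frac{n_1}{(n_2-n_1)\,s}\,e^{C(s)}=\frac{n_1\exp\!\bigl(-\tfrac{e^{s}-1-s}{e^{s}-1}\bigr)}{(n_2-n_1)\,s},
\]
the asserted formula. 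The three limiting statements follow by elementary analysis of it: $s\to\infty$ gives $e^{C(s)}\to e^{-1}$ (with $s\sim\log n_1$ when $n_2\gg n_2-n_1$); $n_2/n_1\to c$ gives $s\to\log\tfrac{c}{c-1}$, $e^{C(s)}\to e^{-1}\lambda(c)$ and $\tfrac{n_1}{(n_2-n_1)s}\to1/\log\lambda(c)$; and $n_2/n_1\to\infty$ forces $s\sim n_1/(n_2-n_1)$, whence $\ex[S(\bold n)]\to1$.
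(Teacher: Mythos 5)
Your proposal follows essentially the same route as the paper: start from the integral identity for $P(\bold n)$ in Lemma \ref{extbasic}, integrate out $\bold y$, expand the $m$-fold products to second order in the $x_i$'s, use the fact that conditionally on $a=\sum_i x_i$ the normalized vector behaves like uniform spacings so that $\sum_i x_i^2$ concentrates at $2a^2/n_1$, and localize the resulting one-dimensional integral at $a\approx s(\bold n)$; your correction exponent $C(s)=-1+\tfrac{s}{e^s-1}=-\tfrac{e^s-1-s}{e^s-1}$ agrees exactly with the paper's $1-h(\bold n)$, and your limiting cases are handled the same way. Two differences in execution are worth noting. First, the paper does not run a single two-sided expansion: it proves separate upper and lower bounds, the upper one by splitting into the regions $t/s^2\ge 3/n_1$ and $t/s^2\le 3/n_1$ and using the joint-density bound $f_n(s,t)\le \tfrac{s^{n-1}}{(n-1)!}g_n(t)$ together with $n_1T_{n_1}\to2$ and bounded convergence (so that $e^{C}$, not just $e^{O(1)}$, is the prefactor), and the lower one by restricting to an explicit good region $D$ and decoupling $u=\sum_j x_j$ from $v_j=x_j/u$ via a change of variables whose Jacobian reproduces the spacings density. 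This is precisely the "main obstacle" you flag but do not carry out, so your sketch would need that machinery (or an equivalent) to become a proof. Second, your endgame is heavier than necessary: after pulling out $e^{C(s)}/s$, the remaining integral is, via $z=e^{-a}$, the exact Beta integral $\int_0^1 z^{n_2-n_1-1}(1-z)^{n_1}\,dz=\bigl[(n_2-n_1)\binom{n_2}{n_1}\bigr]^{-1}$, so the paper never needs Stirling nor your case split between a Gaussian peak ($n_2-n_1\to\infty$) and a fat Gamma-type peak ($n_2-n_1=O(1)$); your Laplace computation does reproduce the right answer in both regimes, but the exact evaluation is both simpler and uniform in $n_2-n_1$, and it is also what the paper reuses later for the tail estimates in Theorems \ref{Q(M)appr} and \ref{R(M)appr}.
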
 
{\bf Note.\/} Recall that for $n_2=n_1$ we had proved that $\ex\bigl[S(\bold n)\bigr]\sim e^{-1} n_1\log n_1$. Thus increasing the cardinality of one of the sides just by $1$ reduces the asymptotic expected number of stable matchings by the factor of $\log^2 n_1$, but  the resulting number is a sizable $e^{-1}\tfrac{n_1}{\log n_1}$. 

We conjecture that  for $n_2/n_1\to 1$, i.e. when $\ex\bigl[S(\bold n)\bigr]\to\infty$,
the second order moment $\ex\bigl[S^2(\bold n)\bigr]=O\bigl(\ex^2\bigl[S(\bold n)\bigr]\bigr)$, so that,
with a positive limiting probability, $S(\bold n)=\theta\bigl(\ex\bigl[S(\bold n)\bigr]\bigr)$, in a complete
analogy with the case $n_1=n_2$, see \cite{LenPit}.

On the other hand, once the difference $n_2-n_1$ becomes  comparable to  the smaller cardinality, the limiting expected number is finite--still above $1$ as it should be--implying that w.h.p. there are ``just a few'' stable matchings. Finally, if $n_2/n_1\to\infty$ then $\lim \ex\bigl[S(\bold n)\bigr]=1$, implying that $\lim \pr(S(\bold n)=1)=1$.

Our next two theorems establish sharp concentration of the likely ranges $\{Q(\mathcal M)\}_{\mathcal M}$ and $\{R(\mathcal M)\}_{\mathcal M}$ of husbands' and wives' ranks around certain deterministic functions of $\bold n$, when $\mathcal M$ runs through the set of all stable matchings. Let $a,\,b,\,c,\,d<1/2$, and 
\begin{align}
\delta(\bold n)&:=\left\{\begin{aligned}
&s(\bold n)^{-b},&&\text{if }s(\bold n)\to\infty,\\
&n_1^{-a},&&\text{if }s(\bold n)=O(1),\end{aligned}\right.\label{delta(n)=expl}\\
 \mathcal P(\bold n)&:=\left\{\begin{aligned}
&\exp\Bigl(\!- c(n_2-n_1)s(\bold n)^{2(1-b)}\Bigr),&&\text{if }s(\bold n)\to\infty,\\
&\!\exp\Bigl(-\theta\bigl(n_1^{1-2a}\bigr)\Bigr),&&\text{if }s(\bold n)=O(1).
\end{aligned}\right.\notag
\end{align}
\begin{Theorem}\label{Q(M)appr} For $n_2>n_1$ and $n_1$ sufficiently large,
\[
\pr\left(\max_{\mathcal M}\left|\frac{Q(\mathcal M)}{n_2s(\bold n)}-1\right|\ge \delta(\bold n)\right)
\le \mathcal P(\bold n).
\]
\end{Theorem}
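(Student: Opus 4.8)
\smallskip
\noindent\emph{Proof proposal.} The plan is to reduce the assertion to a first--moment estimate and then control the truncated first moment via the generating function supplied by the formula for $P_k(\bold n)$ in Lemma~\ref{extbasic}. Put $\mu=\mu(\bold n):=n_2s(\bold n)$ and let $\mathcal N_k=\mathcal N_k(\bold n)$ be the number of stable matchings $\mathcal M$ with $Q(\mathcal M)=k$; by the symmetry of the model $\ex[\mathcal N_k]=\binom{n_2}{n_1}n_1!\,P_k(\bold n)$, so $\sum_{k\ge n_1}\ex[\mathcal N_k]=\ex[S(\bold n)]$. Since $Q(\mathcal M)\ge n_1$ always, the event in the theorem is contained in $\bigcup_{k\,:\,|k-\mu|\ge\delta(\bold n)\mu}\{\mathcal N_k\ge1\}$, so by Markov's inequality and a union bound over the at most $n_1n_2$ relevant values of $k$,
\[
\pr\!\left(\max_{\mathcal M}\Bigl|\tfrac{Q(\mathcal M)}{\mu}-1\Bigr|\ge\delta(\bold n)\right)\le\sum_{k\le(1-\delta)\mu}\ex[\mathcal N_k]+\sum_{k\ge(1+\delta)\mu}\ex[\mathcal N_k],
\]
and it suffices to bound each tail by $\tfrac12\mathcal P(\bold n)$.

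I would next pass to the generating function
\[
G(\xi):=\sum_{k\ge n_1}\ex[\mathcal N_k]\,\xi^{\,k-n_1}=\binom{n_2}{n_1}n_1!\idotsint_{\bold x,\bold y\in[0,1]^{n_1}}\prod_{i\neq j}\bigl(\bar x_i+\xi x_i\bar y_j\bigr)\prod_h\bar x_h^{\,n_2-n_1}\,d\bold x\,d\bold y,
\]
the closed form coming from Lemma~\ref{extbasic} via $1-x_i(1-\xi+\xi y_j)=\bar x_i+\xi x_i\bar y_j$; thus $G$ is a polynomial in $\xi$ with non--negative coefficients and $G(1)=\ex[S(\bold n)]$. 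A Chernoff--type bound then reduces the two tails to estimating $G$ near $\xi=1$: for $\xi>1$, $\sum_{k\ge(1+\delta)\mu}\ex[\mathcal N_k]\le\xi^{-(1+\delta)\mu+n_1}G(\xi)$, and for $\xi\in(0,1)$, $\sum_{k\le(1-\delta)\mu}\ex[\mathcal N_k]\le\xi^{-(1-\delta)\mu+n_1}G(\xi)$. So everything comes down to a sufficiently sharp asymptotic upper bound on $G(\xi)$ for $\log\xi$ in a small interval about $0$.

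The core of the argument is the asymptotic evaluation of $G(\xi)$, carried out as for $P(\bold n)$ in the proof of Theorem~\ref{E(S(n))sim} (and as for $P_k(n)$ in the balanced case in \cite{Pit1},\cite{Pit2}). Integrating out $\bold y$ first --- writing $\prod_{i\neq j}(\bar x_i+\xi x_i\bar y_j)=\bigl(\prod_{i,j}(\bar x_i+\xi x_i\bar y_j)\bigr)\big/\prod_i(\bar x_i+\xi x_i\bar y_i)$, discarding the diagonal factor off a small exceptional set of $\bold x$, and using $\idotsint_{\bold y\in[0,1]^{n_1}}\prod_j\prod_i(\bar x_i+\xi x_i\bar y_j)\,d\bold y=\bigl(\int_0^1\prod_i(\bar x_i+\xi x_i v)\,dv\bigr)^{n_1}$ --- turns $G(\xi)$ into a Laplace--type integral over $\bold x\in[0,1]^{n_1}$. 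Passing to the empirical distribution of the $x_i$ (equivalently, a scalar order parameter such as $\sum_i x_i/\bar x_i$), applying the Laplace method, and using Stirling's formula on the $\binom{n_2}{n_1}n_1!$ prefactor yields $\log G(\xi)=\Psi(\xi)+(\text{lower order})$ with a smooth convex $\Psi$: $\Psi(1)=\log\ex[S(\bold n)]$ (consistency with Theorem~\ref{E(S(n))sim}), $\Psi'(1)=\mu-n_1+o(\mu)$ (so the weighted--typical value of $k$ is $\sim\mu=n_2s(\bold n)$, the right centering), and $\Psi''(\xi)=O\bigl(n_2^2/(n_2-n_1)\bigr)$ uniformly near $\xi=1$. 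Feeding this into the Chernoff bounds, with $\log\xi_\pm$ taken of the near--optimal order $\pm\delta(\bold n)\mu(n_2-n_1)/n_2^2$ and $\Psi$ expanded to second order, each tail is at most
\[
\exp\Bigl(\log\ex[S(\bold n)]-\theta\bigl((n_2-n_1)\,s(\bold n)^2\,\delta(\bold n)^2\bigr)\Bigr).
\]
Here $(n_2-n_1)s(\bold n)^2\delta(\bold n)^2$ equals $(n_2-n_1)s(\bold n)^{2(1-b)}$ when $s(\bold n)\to\infty$ and $\theta(n_1^{1-2a})$ when $s(\bold n)=O(1)$ with $n_2/n_1$ bounded (when $n_2/n_1\to\infty$ the bound on $\Psi''$ is no longer tight, but there the left tail is vacuous and the right tail is even smaller --- deviations of Poisson rather than Gaussian type --- so a cruder $G(\xi)$ bound suffices). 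In every case, because $b<\tfrac12$ makes $s(\bold n)^{2(1-b)}\gg\log n_1$ already in the worst case $n_2-n_1=1$ (where $s(\bold n)\sim\log n_2$), and $a<\tfrac12$ makes $n_1^{1-2a}$ a genuine power of $n_1$, this negative exponent dominates both $\log\ex[S(\bold n)]=O(\log n_1)$ (by Theorem~\ref{E(S(n))sim}) and the $O(\log n_1)$ cost of the union bound; hence each tail is $\le\tfrac12\mathcal P(\bold n)$, which is the claim.

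I expect the principal obstacle to be the uniform--in--$\xi$ Laplace (saddle--point) evaluation of the $2n_1$--dimensional integral $G(\xi)$ with explicit error control: in particular (i) justifying the replacement of $\prod_{i\neq j}$ by $\prod_{i,j}$ and bounding the contribution of $\bold x$ near the boundary of the cube $[0,1]^{n_1}$, where the decaying factor $\bar x_h^{\,n_2-n_1}$ and the growing inner integral $\int_0^1\prod_i(\bar x_i+\xi x_i v)\,dv$ work against one another; and (ii) tracking the parameter $n_2-n_1$, which may range from $1$ to $\theta(n_1)$ --- this is exactly why $s(\bold n)$ interpolates between $\theta(\log n_1)$ and $\theta(1)$ and why the statement splits into the two regimes of \eqref{delta(n)=expl} --- so that the Laplace estimate, the centering $\Psi'(1)\sim\mu-n_1$, and the variance bound on $\Psi''$ all hold uniformly across this whole range. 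The remaining ingredients --- the first--moment reduction, the Chernoff step, and the Stirling bookkeeping --- are routine refinements of what is already done for $n_1=n_2=n$ in \cite{Pit1},\cite{Pit2}.
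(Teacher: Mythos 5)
Your skeleton (first--moment bound via $\ex[\mathcal N_k]=\binom{n_2}{n_1}n_1!P_k(\bold n)$, then a Chernoff--type bound extracted from the formula for $P_k(\bold n)$ in Lemma~\ref{extbasic}) is exactly the paper's strategy, and the lower--tail half of your argument is in the right spirit. But there is a genuine gap in the upper tail, and it is not just a matter of unexecuted technicalities: your key claim about $G(\xi)$ is false at the tilt you need, precisely in the main regime $s(\bold n)\to\infty$ (e.g.\ $n_2=n_1+1$). Your prescription is $\log\xi_+\asymp\delta(\bold n)\mu(n_2-n_1)/n_2^2\asymp\delta(\bold n)s(\bold n)(n_2-n_1)/n_2$, which for $n_2-n_1=1$ is $\asymp(\log n_1)^{1-b}/n_1$. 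Now the profile $k\mapsto\ex[\mathcal N_k]$ has a far tail that is only exponentially small in $n_1$: restricting the integral for $P(\bold n)$ to $x_i\asymp\alpha$ and $y_j\lesssim 1/n_1$ shows that the expected number of stable matchings with $Q\ge\tfrac{\alpha}{2} n_1^2$ is at least $e^{-C(\alpha)n_1(n_2-n_1)}$. Hence
\[
G(\xi_+)\;\ge\;e^{-C n_1}\,\xi_+^{\,\alpha n_1^2/2-n_1}\;=\;\exp\Bigl(\tfrac{\alpha}{2}\,n_1(\log n_1)^{1-b}(1-o(1))-Cn_1\Bigr)\longrightarrow\infty
\]
much faster than your claimed $\exp\bigl(\Psi(\xi_+)+\text{lower order}\bigr)=\exp\bigl(O((\log n_1)^{2-b})\bigr)$; the same happens for any $n_2-n_1\ll n_2$, since the tilt gain on the far tail is $e^{\theta(\delta s\,n_1(n_2-n_1))}$ against a cost of only $e^{-\theta(n_1(n_2-n_1))}$ and $\delta(\bold n)s(\bold n)\to\infty$. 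So the quadratic expansion of $\log G$ near $\xi=1$ with $\Psi''=O(n_2^2/(n_2-n_1))$ does not hold uniformly at the required scale, and the resulting bound $\xi_+^{-(1+\delta)\mu+n_1}G(\xi_+)$ is vacuous (it exceeds $1$ by a factor $e^{\theta(n_1(\log n_1)^{1-b})}$). Note that the far tail itself is harmless for the statement --- it is far below $\mathcal P(\bold n)$, which is only $\exp(-(\log n_1)^{2(1-b)})$ when $n_2-n_1=O(1)$ --- the failure is entirely an artifact of using one global Chernoff parameter for the whole sum.

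The repair is exactly the point where the paper departs from your plan: the exponential tilt must not be applied where it amplifies the large--$Q$ (equivalently large $s=\sum_i x_i$) contribution. The paper takes the infimum over $\xi$ \emph{inside} the $(\bold x,\bold y)$--integral, chooses $\xi=\xi(s)$ depending only on $s$, namely $\xi(s)=t/s$ with $t=(1\pm\delta(\bold n))s(\bold n)$ on the side where this is admissible and the fall--back $\xi(s)=1$ elsewhere, and then bounds the resulting one--dimensional integrals by elementary concavity and second--derivative estimates for the explicit functions $\psi,\psi_1$ at their maximizers --- no saddle--point expansion of $G(\xi)$ in $\xi$, and no asymptotics of $G$ away from $\xi=1$, are ever needed. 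A version of your global scheme could be salvaged by first truncating the upper $k$--range (bounding the remainder by the untilted first moment) and only then tilting, but as written the central estimate you rely on is incorrect, and the remaining analytic claims ($\Psi'(1)\sim\mu-n_1$, the $\Psi''$ bound, uniform error control over the whole range of $n_2-n_1$) are asserted rather than proved.
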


Next, introduce 
\[
f(x)=\frac{e^x-1-x}{x(e^x-1)},
\]
and define
\[
\delta^*(\bold n):=\frac{\delta(\bold n)}{s(\bold n)f(s(\bold n))}.
\]
By \eqref{delta(n)=expl}, and $s(\bold n)\ge \tfrac{n_1}{n_2}$,
\[
\delta^*(\bold n)=\left\{\begin{aligned}
&O\bigl(s(\bold n)^{-b}\bigr),&&\text{if }s(\bold n)\to\infty,\\
&O\bigl(n_2n_1^{-1-a}\bigr),&&\text{if }s(\bold n)=O(1),\end{aligned}\right.
\]
where $a,\,b<1/2$. The case $s(\bold n)=O(1)$ forces us to impose the condition $n_2\le n_1^{3/2-d}$, 
$d<1/2$. Under this condition, $\delta^*(\bold n)$ tends to zero for $s(\bold n)=O(1)$ as well,  if we
choose $a<1/2 - d$, which we do.
\begin{Theorem}\label{R(M)appr} Suppose that $n_1<n_2\le n_1^{3/2-d}$ and $n_1$ is sufficiently 
large. Then 
\[
\pr\left(\max_{\mathcal M}\left|\frac{R(\mathcal M)}{n_1^2f(s(\bold n))}-1\right|\ge 1.01\delta^*(\bold n)\right)\le \mathcal P(\bold n).
\]
\end{Theorem}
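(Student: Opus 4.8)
The plan is a first--moment argument, sharpened by an exponential tilt of the generating function for $R(\mathcal M)$ supplied by Lemma~\ref{extbasic}, paralleling the treatment of the balanced case in \cite{Pit2}. Put $\ell_\pm:=n_1^2 f(s(\bold n))\bigl(1\pm 1.01\,\delta^*(\bold n)\bigr)$; the event to be bounded is exactly that some stable injection $\mathcal M$ has $R(\mathcal M)\ge\ell_+$ or $R(\mathcal M)\le\ell_-$, and since $\binom{n_2}{n_1}n_1!\,P_\ell(\bold n)=\E\bigl[\#\{\mathcal M\text{ stable}:R(\mathcal M)=\ell\}\bigr]$, Markov's inequality gives
\[
\pr\Bigl(\max_{\mathcal M}\bigl|\tfrac{R(\mathcal M)}{n_1^2 f(s(\bold n))}-1\bigr|\ge 1.01\,\delta^*(\bold n)\Bigr)\ \le\ \binom{n_2}{n_1}n_1!\Bigl(\sum_{\ell\ge\ell_+}+\sum_{\ell\le\ell_-}\Bigr)P_\ell(\bold n).
\]
Summing the $P_\ell(\bold n)$--formula of Lemma~\ref{extbasic} against $z^{\ell-n_1}$ term by term (all terms nonnegative for $z>0$) produces the generating polynomial
\[
\Phi(z):=\sum_{\ell\ge n_1}P_\ell(\bold n)\,z^{\ell-n_1}=\overbrace{\idotsint}^{2n_1}_{\bold x,\,\bold y\in[0,1]^{n_1}}\prod_{i\ne j}\bigl(1-y_j+z\,y_j\bar x_i\bigr)\prod_h\bar x_h^{\,n_2-n_1}\,d\bold x\,d\bold y ,
\]
whose every factor $1-y_j+zy_j\bar x_i$ is $\ge0$ for $z\ge0$; hence $\Phi$ is positive, increasing and analytic on $(0,\infty)$, with $\Phi(1)=P(\bold n)$, $\binom{n_2}{n_1}n_1!\,\Phi(1)=\E[S(\bold n)]$, and $\Psi(w):=\log\Phi(e^w)$ smooth and convex.

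By the Chernoff inequality $\sum_{\ell\ge\ell_+}P_\ell(\bold n)\le e^{-w(\ell_+-n_1)}\Phi(e^w)$ for $w>0$ and $\sum_{\ell\le\ell_-}P_\ell(\bold n)\le e^{-w(\ell_--n_1)}\Phi(e^w)$ for $w<0$, so the displayed bound is at most $\E[S(\bold n)]$ times $\exp\bigl[-w_+(\ell_+-n_1)+\Psi(w_+)-\Psi(0)\bigr]+\exp\bigl[-w_-(\ell_--n_1)+\Psi(w_-)-\Psi(0)\bigr]$ for any $w_+>0>w_-$. The proof then reduces to two estimates near $z=1$: \emph{(i)} the mean, $\Psi'(0)+n_1=\E\bigl[\sum_{\mathcal M\text{ stable}}R(\mathcal M)\bigr]/\E[S(\bold n)]=\bigl(1+o(\delta^*(\bold n))\bigr)\,n_1^2 f(s(\bold n))$; and \emph{(ii)} the spread, $\sup_{|w|\le w_0}\Psi''(w)\le\bar\sigma^2(\bold n)$ for some $w_0(\bold n)>0$, with $\bigl(\delta^*(\bold n)\,n_1^2 f(s(\bold n))\bigr)^2\ge 2(1+o(1))\,\bar\sigma^2(\bold n)\,\log\mathcal P(\bold n)^{-1}$. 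Granting these: by Taylor's theorem $\Psi(w)-\Psi(0)-w\Psi'(0)\le\tfrac12 w^2\sup_{|w'|\le|w|}\Psi''(w')$; by \emph{(i)} both $\ell_+-n_1-\Psi'(0)$ and $\Psi'(0)+n_1-\ell_-$ exceed $\delta^*(\bold n)\,n_1^2 f(s(\bold n))=:\Delta$ once $n_1$ is large (the constant $1.01$ absorbing the $o(\delta^*)$--error in the mean); and optimizing $w_\pm=\pm\min\{\Delta/\bar\sigma^2(\bold n),\,w_0\}$ bounds each exponential by $\exp\bigl(-\tfrac{\Delta^2}{2\bar\sigma^2(\bold n)}\bigr)\le\mathcal P(\bold n)^{1+o(1)}$. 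Since by Theorem~\ref{E(S(n))sim} and \eqref{delta(n)=expl} one has $\log\E[S(\bold n)]=O(\log n_1)=o\bigl(\log\mathcal P(\bold n)^{-1}\bigr)$ in both regimes $s(\bold n)\to\infty$ and $s(\bold n)=O(1)$, the whole bound is $\le\mathcal P(\bold n)$ for $n_1$ large, which is the assertion.

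It remains to prove \emph{(i)}--\emph{(ii)}, and this is where the work lies. Both come from the Laplace--type asymptotic analysis of the $2n_1$--fold integral $\Phi(e^w)$ for $w$ in a shrinking neighbourhood of $0$ --- the analysis that proves Theorem~\ref{E(S(n))sim} in the case $w=0$, now carried out after differentiating once and twice under the integral sign. Explicitly, $\Psi'(0)=\Phi'(1)/\Phi(1)=\E_\nu[T]$ and $\Psi''(0)=\mathrm{Var}_\nu[T]+\E_\nu[T-T_2]$, where $\nu$ is the probability measure on $[0,1]^{2n_1}$ with density $\propto\prod_{i\ne j}(1-x_iy_j)\prod_h\bar x_h^{\,n_2-n_1}$, $T:=\sum_{i\ne j}\tfrac{y_j\bar x_i}{1-x_iy_j}$, and $T_2:=\sum_{i\ne j}\bigl(\tfrac{y_j\bar x_i}{1-x_iy_j}\bigr)^2\le T$ (each summand lying in $[0,1]$); $\Psi''(w)$ is the analogous variance under the $e^w$--tilt of $\nu$. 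Under $\nu$ the coordinates $x_i$, respectively $y_j$, are to leading order i.i.d.\ exponentials truncated to $[0,1]$ on the scales $s(\bold n)/n_1$, respectively $1/s(\bold n)$; substituting $\tfrac{y_j\bar x_i}{1-x_iy_j}=y_j-x_iy_j\bar y_j+\cdots$ then gives $T\approx n_1\sum_j y_j$ with $\E_\nu[y_j]\approx f(s(\bold n))$ --- this is \emph{(i)} --- while the near--independence yields a variance estimate comfortably inside the budget of \emph{(ii)}. In the regime $s(\bold n)=O(1)$ the hypothesis $n_2\le n_1^{3/2-d}$ enters exactly here: it is what keeps $\delta^*(\bold n)\to0$ and the relative spread $\mathrm{Var}_\nu[T]/\E_\nu[T]^2\to0$.

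\textbf{Main obstacle.} The reductions leading to \emph{(i)}--\emph{(ii)} are routine Chernoff bookkeeping; the real difficulty is making the asymptotics of $\Phi(e^w)$ rigorous and \emph{uniform} for $w$ near $0$ --- controlling the truncation of the exponential factors to $[0,1]$, the non--product corrections $1/(1-x_iy_j)$, and the coupling between the $x$-- and $y$--blocks through $\sum_i x_i$ and $\sum_j y_j$ --- precisely enough to pin the mean down to relative error $o(\delta^*(\bold n))$ and to bound $\Psi''$ by the required quantity across both ranges $s(\bold n)\to\infty$ and $s(\bold n)=O(1)$.
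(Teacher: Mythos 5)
Your outline is a legitimate strategy in spirit (first moment plus an exponential tilt in the rank variable, as in the paper), but as written it has a genuine gap: the entire technical core is deferred. Everything is reduced to the two estimates \emph{(i)} (the mean $\Psi'(0)+n_1=(1+o(\delta^*(\bold n)))\,n_1^2f(s(\bold n))$) and \emph{(ii)} (a uniform bound on $\Psi''(w)$ for $|w|\le w_0$ of the right size), and these are only supported by heuristics (``to leading order i.i.d.\ exponentials'', ``near--independence yields a variance estimate comfortably inside the budget''), with your own closing paragraph conceding that the uniform control of the tilted integral is the real difficulty. Worse, the one quantitative claim you do make is not actually comfortable: under the measure $\nu$ the dominant contribution to $\Psi''(0)$ comes from the fluctuation of $s=\sum_i x_i$ (std.\ $\asymp\sqrt{n_1/(n_2(n_2-n_1))}$) propagated through the conditional mean $\approx n_1^2 f(s)$, and a short computation shows this contribution is of \emph{exactly the same order} as your budget $\Delta^2/\log\mathcal P(\bold n)^{-1}$ in both main regimes ($s(\bold n)\to\infty$, and $s(\bold n)=O(1)$ with $n_2\asymp n_1$); the same is true of the $\bold y$--fluctuation term when $s(\bold n)\asymp1$. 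So the quadratic (variance) Chernoff bound is borderline: whether it reproduces $\mathcal P(\bold n)$ depends on constants and, more seriously, on controlling $\Psi''(w)$ all the way out to the optimizing tilt, i.e.\ at the edge of validity of the Taylor expansion -- and the mean must simultaneously be pinned down to relative error $o(\delta^*(\bold n))$ so that the factor $1.01$ can absorb it. None of this is carried out, and it is precisely where the condition $n_2\le n_1^{3/2-d}$ and the two regimes of $\delta(\bold n)$ must enter.

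For comparison, the paper avoids the global mean/variance route altogether: it applies Chernoff's bound \emph{inside} the $2n_1$--fold integral, choosing the tilt $\eta^{\pm}(s)$ as an explicit function of $s=\sum_i x_i$ (so that $s(\eta^{\pm}(s))$ is pinned at $s^{\pm}=s(\bold n)/(1\pm\delta(\bold n))$), after first discarding peripheral regions $C_1,C_2$ of the cube. This reduces everything to univariate Laplace--type integrals of $e^{\psi_1(s)}$, and the tail is then controlled by the \emph{global concavity} of $\psi_1$ (a tangent-line bound at $s^{\pm}$), not by a quadratic approximation at the optimum -- which is exactly what yields the stated exponents without the delicate uniform second-derivative control your plan would require. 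If you want to salvage your route, you would have to prove \emph{(i)} and a sharp, uniform version of \emph{(ii)}, which in practice means redoing the paper's Laplace analysis anyway; as it stands, the proposal is an unproved reduction, not a proof.
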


{\bf Notes.\/}  These two theorems together can be viewed as a quantified analogue of the AKL theorem.
They effectively show how the width of an interval enclosing the scaled concentration point
determines the probability that the full range of the corresponding total rank is contained in this
interval. They demonstrate that while the likely bounds for the range of $Q(\mathcal M)$ in the AKL
theorem are rather sharp in the full range of $n_1,\,n_2$, those for $R(\mathcal M)$ are sharp only in the extreme cases, namely $n_2/n_1\to 1$ and $n_2/n_1\to\infty$.  

That we had  to impose the constraint $n_2\le n_1^{3/2-d}$ came as a surprise.  Most likely, it is an artifact  of our method, and we have no reason to doubt that only the condition
$n_2/n_1\to\infty$ is needed for 
\[
\max_{\mathcal M}\bigl|n_1^{-2} R(\mathcal M)-\tfrac{1}{2}\bigr|\to 0,
\]
in probability. For $n_2\gg n_1^2$, for instance, with probability 
\[
\frac{(n_2)_{n_1}}{n_2^{n_1}}\sim \exp\left(-\frac{n_1^2}{2n_2}\right)\to 1,
\]
 the best marriage candidates for $n_1$ men are all distinct. So w.h.p. in $n_1$ steps of the men-to-women proposal algorithm the men will propose to, and will be accepted by their respective best choices.
By Theorem \ref{E(S(n))sim}, w.h.p. this stable matching $\mathcal M_1$, with $Q(\mathcal M_1)=n_1$, is unique. As for $R(\mathcal M_1)$, it equals the sum of $n_1$ independent, 
$[n_1]$--Uniforms $U_j$, whence in probability
\[
n_1^{-2}R(\mathcal M_1)=n_1^{-2}\sum_{j=1}^{n_1}U_j\to\int_0^1 x\,dx=\tfrac{1}{2}.
\]
It is possible though that appearance of the growth bound for $n_2$ in our argument signals an 
abrupt change in the likely structure of the already unique stable matching, when $n_2$ passes through the threshold $n_1^{3/2}$. Coincidentally, I learned from Yash Kanoria's e-mail
that they also might have tacitly assumed that $n_2$ did not grow too fast with $n_1$.

Finally, we use the powerful result of Irving and Leather \cite{IrvLea} on the lattice of stable matchings,
in combination with an analogue of the formula for $P(\bold n)$ in Lemma \ref{extbasic}, to prove
\begin{Theorem}\label{frctns} Let $m(\bold n)$ and $w(\bold n)$ stand for the fraction of men and
for the fraction of women with more than one stable partner. If $n_2-n_1\ge n_1^{1/2}(\log n_1)^{-\gamma}$, $(\gamma<1)$, then $\lim m(\bold n)=\lim w(\bold n)=0$ in probability.
\end{Theorem}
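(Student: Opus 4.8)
The plan is to reduce the claim, via Markov's inequality, to the bound $\ex[D]=o(n_1)$, where $D$ is the number of men with more than one stable partner, and then to compute $\ex[D]$ with the help of the rotation structure of the set of stable matchings together with an integral identity of the type in Lemma \ref{extbasic}. Let $\mathcal M_1$ be the man-optimal ($=$ woman-pessimal) and $\mathcal M_2$ the woman-optimal ($=$ man-pessimal) stable matching. A man $m$ has more than one stable partner iff $\mathcal M_1(m)\neq\mathcal M_2(m)$, and the same holds for a woman; since $\mathcal M_1,\mathcal M_2$ are two injections with common image $A_1$, their edge sets differ along vertex-disjoint alternating cycles, so $D$ also equals the number of women with $\ge 2$ stable partners. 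Hence it suffices to prove $\ex[D]=o(n_1)$: then $\pr\bigl(m(\bold n)>\eps\bigr)=\pr\bigl(D>\eps n_1\bigr)\le \ex[D]/(\eps n_1)\to0$ for each $\eps>0$, and the same bound controls $w(\bold n)$. Writing $d_m$ ($d_w$) for the number of stable partners of $m$ ($w$) and $T=\sum_m d_m=\sum_w d_w$ for the number of (man, woman) pairs occurring in some stable matching, the Irving--Leather fact that every man lies in exactly $d_m-1$ rotations gives $D\le\sum_m(d_m-1)=T-n_1$. So it is enough to show $\ex[T]=n_1\bigl(1+o(1)\bigr)$.

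By the permutation symmetry of the model, $\ex[T]=n_1n_2\,\pr\bigl((m_1,w_1)\text{ is a stable pair}\bigr)$, and, $\mathcal M_1$ having exactly $n_1$ edges, the same symmetry gives $\pr\bigl(\mathcal M_1(m_1)=w_1\bigr)=1/n_2$. Thus it remains to prove
\[
\pr\bigl((m_1,w_1)\text{ is a stable pair and }\mathcal M_1(m_1)\neq w_1\bigr)=o(1/n_2),
\]
i.e. $\ex\bigl[\#\{(m,w):\ (m,w)\text{ a stable pair},\ \mathcal M_1(m)\neq w\}\bigr]=o(n_1)$. Here I would invoke the promised analogue of the formula for $P(\bold n)$: on the refined probability space $(\bold X,\bold Y)$ underlying Lemma \ref{extbasic}, translating the event that $(m_1,w_1)$ is a stable pair with $\mathcal M_1(m_1)\neq w_1$ --- via the Irving--Leather description of such a pair as one created by eliminating an appropriate rotation, equivalently as one for which a suitable sub-instance has a stable matching pairing $m_1$ with $w_1$ --- into a product-form integrand, one obtains an integral representation of $\ex[T-n_1]$ of the same shape as the $P(\bold n)$-integral, carrying the factor $\prod_h\bar x_h^{\,n_2-n_1}$ together with an extra factor that records the forced pair and the attendant blocking constraints. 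One then evaluates this integral asymptotically by the Laplace/saddle-point method of \cite{Pit1}, \cite{Pit2}, \cite{Pit3}; the computation should show that $\ex[T-n_1]$ is of smaller order than $n_1$ exactly when $n_2-n_1$ exceeds $n_1^{1/2}$ up to a logarithmic factor, which is precisely the hypothesis $n_2-n_1\ge n_1^{1/2}(\log n_1)^{-\gamma}$, $\gamma<1$.

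The main obstacle, I expect, is this last estimate: obtaining the correct ``$P(\bold n)$-analogue'' --- that is, turning the rotation-theoretic characterization of a stable pair into an honest product over the coordinates of the conditioning point, compatible with the conditioning device behind Lemma \ref{extbasic} --- and then pushing the saddle-point evaluation through uniformly in $\bold n$ over the whole range $n_2/n_1>1$, so that the cut-off $n_1^{1/2}(\log n_1)^{-\gamma}$ falls out. The reduction above and the concluding limit are, by contrast, routine. (In the regime $n_2/n_1\to\infty$ the claim is in any case immediate, since then $\ex[S(\bold n)]\to1$ by Theorem \ref{E(S(n))sim}, so w.h.p. $S(\bold n)=1$ and $D=0$.)
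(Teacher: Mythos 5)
Your reduction is sound and follows the same route as the paper: bound the number of members with more than one stable partner by a rotation count, take expectations, and finish with Markov's inequality. (Your preliminary observations --- $D\le T-n_1$, the equality of the men's and women's counts via the alternating-cycle decomposition of $\mathcal M_1\triangle\mathcal M_2$ inside the common image $A_1$, and $\pr(\mathcal M_1(m_1)=w_1)=1/n_2$ --- are all fine, and the $n_2/n_1\to\infty$ remark is correct.) But the proof stops exactly where the theorem lives. The entire quantitative content is the estimate you defer: one needs (i) an explicit product-form integral for the probability $P(\bold n,r)$ that a prescribed cyclic sequence of $r$ matched pairs is a rotation exposed in a given stable injection --- in the paper this is the $P(\bold n)$-integrand with the factor $\prod_{k\le r}x_ky_k$ inserted and the corresponding $r$ stability factors removed; (ii) a reduction to a univariate integral bounded by $\bigl(\tfrac{e^s-1-s}{e^s-1}\bigr)^r(1-e^{-s})^{n_1}e^{-(n_2-n_1)s}s^{-1}$, whose evaluation (by the Section 3 analysis) contributes $\tfrac{1+o(1)}{s(\bold n)(n_2-n_1)}\binom{n_2}{n_1}^{-1}$; and (iii) the summation over the $\binom{n_1}{r}(r-1)!$ choices of the cycle and over $r\ge 2$, where the ratio $\tfrac{n_1!\,(n_1)_r}{(n_1+r-1)!}\le n_1\exp(-r^2/n_1)$ makes the sum over $r$ contribute an extra factor of order $n_1^{1/2}$. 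It is precisely this $\sum_{r\ge2}e^{-r^2/n_1}=O(n_1^{1/2})$ that produces the bound $\ex[\mathcal R(\bold n)]=O\bigl(n_1^{3/2}/(s(\bold n)(n_2-n_1))\bigr)$ and hence the threshold $n_2-n_1\ge n_1^{1/2}(\log n_1)^{-\gamma}$; your statement that the cutoff ``should fall out'' of an unspecified saddle-point computation assumes the conclusion rather than deriving it.

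A secondary point: replacing the rotation-length count by $\ex[T-n_1]$ and symmetrizing to the single pair $(m_1,w_1)$ does not buy you anything, because the event ``$(m_1,w_1)$ is a stable pair but $\mathcal M_1(m_1)\neq w_1$'' is not a locally checkable (product-form) event on the refined space $(\bold X,\bold Y)$; to get an integrable representation you must, as you half-acknowledge, re-expand it as a union over the stable matching and the exposed rotation creating that pair --- which is exactly the paper's $\ex[\mathcal R(\bold n)]$ computation. So the proposal is best read as a correct plan coinciding with the paper's strategy, with the decisive Lemma (the $P(\bold n,r)$ formula) and its asymptotic exploitation left unproved; as it stands it is a gap, not a proof.
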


With some effort, the logarithmic factor (not the power of $n_1$ though) could be improved.  Extension of our approach to the condition  ``$n_2-n_1\ge 1$'' in the AKL theorem is rather problematic.

\section{Proof of Lemma \ref{extbasic}}  First of all, it suffices to consider the injection $M$ such that $M(i)=i$ for all $i\in [n_1]$. 

Introduce the pair of two $n_1\times n_2$ matrices 
$\bold X=\{X_{i,j}\}$, $\bold Y=\{Y_{i,j}\}$ with all $2n_1n_2$ entries being independent, $[0,1]$-uniform random variables. Reading the entries of each row of $\bold X$ and of each column of $\bold Y$ in increasing order, we generate the independent, uniform preferences of each of $n_1$ men and each of $n_2$ women respectively

With probability $1$, $M$ is stable iff 
\begin{equation}\label{Brs=}
\begin{aligned}
&X_{r,s}>X_{r,r},\quad\forall\,r\le n_1,\,s>n_1,\\
&X_{r,s}>X_{r,r}\text{ or }Y_{r,s}>Y_{r,r},\quad\forall\, r,s\le n_1,\,r\neq s.
\end{aligned}
\end{equation}
Call the corresponding events $B_{r,s}$. Crucially, conditioned on the values $x_i=X_{i,i}$, $y_j=
Y_{j,j}$, $i\le n_1$, $j\le n_1$, the $n_1n_2-n_1$ events $B_{r,s}$ are independent, and so
the {\it conditional\/} probability that $M$ is stable equals
\[
\prod_{h\le n_1<j_1}\!\!\!(1-x_h)\prod_{i, j\le n_1,\,i\neq j}\!\!\!(1-x_iy_j)=
\prod_{i, j\le n_1,\,i\neq j}\!\!(1-x_iy_j)\prod_{h\le n_1}\!\bar x_h^{n_2-n_1}.
\]
Integrating this expression over the cube $[0,1]^{2n_1}$, i.e. using the Fubini theorem, we obtain the integral formula for $P(\bold n)$. As for $P_k(\bold n)$, $P_{k,\ell}(\bold n)$ and $P_{k,\ell}(\bold n)$, it suffices to consider
$P_{k,\ell}(\bold n)$. Indeed, the integral representation for $P_k(\bold n)$ will follow by setting
$\eta=1$ and dropping the $[\eta^{\ell-n_1}]$ (extraction) operator. (In fact the formula for $P(\bold n)$ is similarly obtained by setting $\xi=1$, $\eta=1$ and dropping the $[\xi^{k-n_1}\eta^{\ell-n_1}]$
operator.)

Notice that  the wives' and the husbands' total ranks are given by
\begin{align*}
Q(M)&=n_1+\sum_{i=1}^{n_1}\big|\{j\le n_2:X_{i,j}<X_{i,i}\}\bigr|,\\
R(M)&=n_1+\sum_{j=1}^{n_1}\bigl|\{i\le n_1:Y_{i,j}>Y_{j,j}\}\bigr|.
\end{align*}
Indeed, e.g. the number of women whom the man $i$ likes as much as he does his wife $i$ is $1$
plus the number of women $j$ such that $X_{i,j}<X_{i,i}$, whence the formula for $Q(M)$. Of
course, if $M$ is stable, all those women $j$ are among the first $n_1$ women. 
Our task is to compute the probability of the event $\{M\text{ is stable}\}\cap\{Q(M)=k,R(M)=\ell\}$.
Let us determine $P_{k,\ell}(M|\bold x,\bold y)$, the conditional probability of this event given $X_{i,i}=x_i$, $Y_{j,j}=y_j$, ($1\le i,j\le n_1$). Once it is done,  the unconditional $P_{k,\ell}(\bold n)$ is obtained via integrating $P_{k,\ell}(M|\bold x, \bold y)$ over the cube $[0,1]^{2n_1}$.

To this end, we resort to the generating functions  and write
\[
P_{k,\ell}(M|\bold x,\bold y)=[\xi^k\eta^{\ell}]\,\ex\bigl[1_{\{M\text{ is stable}\}}\xi^{Q(M)}\eta^{R(M)}\,|\bold x,\bold y\bigl].
\]
Notice at once that on the event $\{M\text{ is stable}\}$, $Q(M)$ nominally dependent on the
whole $\bold x$ is actually a function of $\{x_i\}_{i\le n_1}$. To determine the underlying polynomial of $\xi,\eta$, it suffices to consider $\xi,\eta\in [0,1]$, in
which case a probabilistic interpretation of this polynomial allows to speed up the otherwise
clumsy derivation.

Go through the unmatched pairs $(i,j)$, i.e. $j\neq i$. Let $j\le n_1$. Whenever $X_{i,j}>X_{i,i} (=x_i)$, {\it mark\/} $(i,j)$ with probability $\xi$; whenever $Y_{i,j}>Y_{j,j} (=y_j)$, {\it color\/}
$(i,j)$ with probability $\eta$, {\it independently\/} of all the previous mark/\linebreak color operations. If both
$X_{i,j}>X_{i,i}$ and $Y_{i,j}>Y_{j,j}$ we perform both mark and color operations on $(i,j)$, independently of each other. On the event $\{M\text{ is stable}\}$ no such pair exists, of course. Then 
\[
\ex\bigl[1_{\{M\text{ is stable}\}}\xi^{Q(M)}\eta^{R(M)}\,|\bold x,\bold y\bigl]=\xi^n\eta^n\pr(\mathcal B|\bold x,\bold y),
\]
where $\mathcal B$ is the event ``$M$ is stable, and all pairs $(i,j)$ eligible for mark/color operation
are marked/colored''. Now 
\[
\mathcal B=\left(\bigcap_{1\le i\neq j\le n_1}\!\!\!\!\mathcal B_{i,j}\right) \bigcap\left(\bigcap_{r\le n_1<s}B_{r,s}\right);
\]
here $B_{r,s}$ is defined in the first line of \eqref{Brs=}, and  $\mathcal B_{i,j}$ is the event
\begin{multline*}
\bigl\{(X_{i,i}<X_{i,j},Y_{j,j}<Y_{i,j})\\
\sqcup (X_{i,i}<X_{i,j}, Y_{j,j}>Y_{i,j},
\,(i,j)\text{ is marked})\\
\sqcup (X_{i,i}>X_{i,j}, Y_{j,j}<Y_{i,j},\,(i,j)\text{ is colored})\bigr\}.
\end{multline*}
Conditioned on the event $\{X_{i,i}=x_i\}_{i\le n_1}\cap \{Y_{j,j}=y_j\}_{j\le n_1}$, 
the $(n_1n_2-n_1)$ events $\mathcal B_{i,j}$, $B_{r,s}$ are all independent, and
\begin{align*}
\pr(\mathcal B_{i,j}|\bold x,\bold y)&=(1-x_i)(1-y_j)+\xi(1-x_i)y_j+\eta x_i(1-y_j)\\
&=\bar x_i\bar y_j+\xi\bar x_iy_j+\eta x_i\bar y_j,\\
\pr(B_{r,s}|\bold x,\bold y)&=1-x_r=\bar x_r.
\end{align*}
Collecting the pieces, and integrating over $(\bold x,\bold y)\in [0,1]^{2n_1}$, we obtain the desired formula for $P_{k,\ell}(\bold n)$.

\section{Proof of Theorem \ref{E(S(n))sim}} \label{P(n)close}
To estimate sharply the $2n_1$-dimensional integral representing $P(\bold n)$ in Lemma \ref{extbasic}, we will use the following facts collected and proved in \cite{Pit1}. Let $X_1,\dots, X_n$ be independent, $[0,1]$-uniform random variables. Denote
\[
\mathcal S_n=\sum_{j=1}^nX_j,\quad \mathcal T_n=\frac{\sum_{j=1}^nX_j^2}{\mathcal S_n^2}.
\]
Also, let $L_1,\dots,L_n$ denote the lengths of the $n$ consecutive subintervals of $[0,1]$ obtained by independently selecting $n-1$ points, each uniformly distributed on $[0,1]$; in particular,
$\sum_jL_j=1$. Define $T_n=\sum_jL_j^2$, $L_n^+=\max_j L_j$.
\begin{Lemma}\label{densities} Let $f_n(s)$, $f_n(s,t)$, $g_n(t)$ denote the
density of $\mathcal S_n$, $(\mathcal S_n,\mathcal T_n)$ and $T_n$ respectively. Then
\begin{equation}\label{fn(s)=}
f_n(s)=\frac{s^{n-1}}{(n-1)!}\pr(L_n^+\le s^{-1});
\end{equation}
in particular
\begin{equation}\label{fn(s)<}
f_n(s)\le \frac{s^{n-1}}{(n-1)!}.
\end{equation}
Furthermore,
\begin{equation}\label{fn(s,t)<}
f_n(s,t)\le \frac{s^{n-1}}{(n-1)!}\,g_n(t).
\end{equation}
\end{Lemma}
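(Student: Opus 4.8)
The plan is to derive the three statements from the classical theory of uniform order statistics and spacings, using the well-known duality between sums of uniforms and spacings. First I would recall that if $U_1,\dots,U_{n-1}$ are the order statistics of $n-1$ i.i.d. $[0,1]$-uniforms, the induced spacings $(L_1,\dots,L_n)$ are uniformly distributed on the simplex $\{\ell_j\ge 0,\ \sum_j \ell_j=1\}$, with density $(n-1)!$ with respect to $(n-1)$-dimensional Lebesgue measure on that simplex. The key structural fact is the Dirichlet/scaling representation: if $X_1,\dots,X_n$ are i.i.d. $[0,1]$-uniforms, then conditionally on $\mathcal S_n=s$ the normalized vector $(X_1/s,\dots,X_n/s)$ has the same law as $(L_1,\dots,L_n)$ conditioned on all coordinates being $\le 1/s$ — equivalently, one obtains the $X_j$'s by first sampling spacings $L_j$, then scaling by $s$, subject to $sL_j\le 1$ for all $j$.

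For \eqref{fn(s)=}, I would compute $f_n(s)$ directly. The density of $(X_1,\dots,X_n)$ is the indicator of the unit cube; passing to the variables $(s,\ell_1,\dots,\ell_{n-1})$ with $x_j=s\ell_j$ (and $\ell_n=1-\sum_{j<n}\ell_j$) has Jacobian $s^{n-1}$, and the constraint $x_j\in[0,1]$ becomes $\ell_j\in[0,1/s]$ for every $j$ together with $\ell\in$ simplex. Integrating out the spacing variables against their $(n-1)!$ density gives
\[
f_n(s)=\frac{s^{n-1}}{(n-1)!}\,\pr\!\left(\max_j L_j\le \frac1s\right)=\frac{s^{n-1}}{(n-1)!}\,\pr\bigl(L_n^+\le s^{-1}\bigr),
\]
which is \eqref{fn(s)=}; the bound \eqref{fn(s)<} is immediate since a probability is at most $1$ (and is exact for $s\le1$, where $L_n^+\le1\le1/s$ automatically).

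For \eqref{fn(s,t)<} I would run the same change of variables but now track the second coordinate. Under $x_j=s\ell_j$ one has $\sum_j x_j^2 = s^2\sum_j\ell_j^2$, hence $\mathcal T_n=\sum_j X_j^2/\mathcal S_n^2 = \sum_j \ell_j^2 = T_n$ evaluated at the spacing vector; so the map sends $(s,\text{spacings})$ to $(\mathcal S_n,\mathcal T_n)=(s,T_n)$. Therefore the joint density factors as
\[
f_n(s,t)=\frac{s^{n-1}}{(n-1)!}\;\pr\bigl(L_n^+\le s^{-1}\,\big|\,T_n=t\bigr)\,g_n(t)\le \frac{s^{n-1}}{(n-1)!}\,g_n(t),
\]
again using that the conditional probability is at most $1$. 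The one point needing a little care — and the step I expect to be the main (though modest) obstacle — is the justification of the conditional-density bookkeeping: one must check that the pushforward of the simplex density under the spacing-to-$T_n$ map is exactly $g_n(t)\,dt$ and that the factor $s^{n-1}/(n-1)!$ genuinely separates, i.e. that the Jacobian depends only on $s$. Both are true because the change of variables $(x_1,\dots,x_n)\mapsto(s,\ell_1,\dots,\ell_{n-1})$ is the product of the pure scaling $s$ and the projection onto the simplex, so the argument is a routine application of the disintegration theorem; I would phrase it cleanly by first conditioning on $\mathcal S_n=s$, noting $(X_j/s)_j \overset{d}{=} (L_j)_j\mid\{L_n^+\le 1/s\}$, and then reading off the density of $\mathcal T_n=T_n$ under that conditional law.
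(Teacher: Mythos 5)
Your proposal is correct, and it is essentially the argument the paper relies on: the paper defers the proof to \cite{Pit1}, noting that both \eqref{fn(s)=} and \eqref{fn(s,t)<} follow from the fact that the spacings $L_1,\dots,L_{n-1}$ have joint density $(n-1)!$, which is exactly what your scaling change of variables $x_j=s\ell_j$ (Jacobian $s^{n-1}$) together with the disintegration over $T_n$ spells out. The derivation of \eqref{fn(s)=}, the bound \eqref{fn(s)<}, and the factorization $f_n(s,t)=\tfrac{s^{n-1}}{(n-1)!}\pr(L_n^+\le s^{-1}\mid T_n=t)\,g_n(t)$ are all sound.
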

We will also need 
\begin{Lemma}\label{Asym:Ln^+,Tn} (1) In probability, 
\[
\lim_{n\to\infty} \tfrac{L_n^+}{n^{-1}\log n}=1,\quad \lim_{n\to\infty}n\, T_n=2,
\]
and (2)
\[
\pr(nT_n\ge 3)=O(n^{-1}).
\]
\end{Lemma}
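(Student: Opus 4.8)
The plan is to reduce everything to the classical exponential representation of uniform spacings, after which part (1) and part (2) follow from elementary tail bounds for sums of i.i.d.\ exponentials together with the exact Dirichlet moments of $(L_1,\dots,L_n)$. Concretely, if $E_1,\dots,E_n$ are i.i.d.\ standard exponentials and $S:=E_1+\dots+E_n$, then $(L_1,\dots,L_n)\overset{d}{=}(E_1/S,\dots,E_n/S)$; equivalently $(L_1,\dots,L_n)$ is uniform on the simplex $\{\ell_j\ge0,\ \sum_j\ell_j=1\}$ (the Dirichlet$(1,\dots,1)$ law). Hence $L_n^+\overset{d}{=}S^{-1}\max_j E_j$ and $T_n\overset{d}{=}S^{-2}\sum_j E_j^2$. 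The only fact about $S$ I will use is $S/n\to1$ in probability, immediate from $\E[S/n]=1$, $\Var(S/n)=1/n$ and Chebyshev's inequality (or the strong law).

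For the maximal spacing, fix $\varepsilon\in(0,1)$. A union bound gives $\Pr\bigl(\max_j E_j>(1+\varepsilon)\log n\bigr)\le n\Pr(E_1>(1+\varepsilon)\log n)=n^{-\varepsilon}\to0$, and by independence $\Pr\bigl(\max_j E_j\le(1-\varepsilon)\log n\bigr)=(1-n^{-(1-\varepsilon)})^{n}\le\exp(-n^{\varepsilon})\to0$; so $(\log n)^{-1}\max_j E_j\to1$ in probability. Combining this with $n/S\to1$ yields $\dfrac{nL_n^+}{\log n}=\dfrac{\max_j E_j}{\log n}\cdot\dfrac{n}{S}\to1$ in probability, which is the first limit in (1). (Equivalently one may substitute $t=(1\pm\varepsilon)n^{-1}\log n$ into the inclusion–exclusion identity $\Pr(L_n^+>t)=\sum_{k\ge1}(-1)^{k+1}\binom{n}{k}(1-kt)_+^{\,n-1}$ and estimate the alternating sum by its leading term.)

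For $nT_n$ I would use the exact moments $\E\bigl[\prod_j L_j^{m_j}\bigr]=(n-1)!\,\prod_j m_j!\,\big/\,(n-1+\textstyle\sum_j m_j)!$ of the uniform law on the simplex. These give $\E[T_n]=\sum_j\E[L_j^2]=\tfrac{2}{n+1}$, and, expanding $\E[T_n^2]=\sum_j\E[L_j^4]+\sum_{i\ne j}\E[L_i^2L_j^2]$ with $\E[L_j^4]=\tfrac{24}{n(n+1)(n+2)(n+3)}$ and $\E[L_i^2L_j^2]=\tfrac{4}{n(n+1)(n+2)(n+3)}$ ($i\ne j$), a one-line computation yields $\Var(T_n)=\tfrac{4(n-1)}{(n+1)^2(n+2)(n+3)}$. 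Hence $\E[nT_n]=\tfrac{2n}{n+1}\to2$ while $\Var(nT_n)=\tfrac{4n^2(n-1)}{(n+1)^2(n+2)(n+3)}<\tfrac4n\to0$, so Chebyshev's inequality gives $nT_n\to2$ in probability, the second limit in (1). For (2), since $\E[nT_n]<2$ we have $\{nT_n\ge3\}\subseteq\{\,|nT_n-\E[nT_n]|\ge1\,\}$, and Chebyshev once more gives $\Pr(nT_n\ge3)\le\Var(nT_n)<4/n=O(n^{-1})$.

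The one non-mechanical point is pinning the constant $1$ — as opposed to merely the order $n^{-1}\log n$ — in the maximal-spacing limit; the lower tail bound $\Pr(\max_j E_j\le(1-\varepsilon)\log n)\le\exp(-n^{\varepsilon})$ is precisely what secures it. Everything else is bookkeeping: the estimate in (2) is deliberately wasteful (the true order of $\Pr(nT_n\ge3)$ is far below $n^{-1}$), but the crude $O(n^{-1})$ is all that the sequel requires and it drops out of the variance bound with no extra effort.
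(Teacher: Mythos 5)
Your proof is correct, and for the parts the paper actually argues it takes a genuinely different route. For part (2) the paper also passes to the exponential representation, writes $nT_n$ as $n\mathcal W^{(2)}/(\mathcal W^{(1)})^2$ with $\mathcal W^{(\ell)}=\sum_j W_j^{\ell}$, splits $\{nT_n\ge 3\}$ into the two events $\{\mathcal W^{(2)}\ge an\}\cup\{\mathcal W^{(1)}<bn\}$ for constants $a>2$, $b<1$ with $a/b^2<3$, and applies Chebyshev twice, once to each sum; you instead stay with the spacings themselves, use the exact Dirichlet$(1,\dots,1)$ moments to get $\E[T_n]=\tfrac{2}{n+1}$ and $\Var(T_n)=\tfrac{4(n-1)}{(n+1)^2(n+2)(n+3)}$ (both computations check out), and apply Chebyshev once directly to $nT_n$, which simultaneously yields the limit $nT_n\to 2$ in probability and the bound $\pr(nT_n\ge3)<4/n$. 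Your version is tighter bookkeeping: no auxiliary constants $a,b$, a single inequality, and the second limit in (1) comes for free. The paper's two-sum route is looser but more robust: bounding $\mathcal W^{(1)}$ and $\mathcal W^{(2)}$ separately is what lets one upgrade $O(n^{-1})$ to the much smaller bounds (of order $\exp(-cn^{1/3})$ or better) that the paper mentions in passing, via exponential or higher-moment estimates on i.i.d.\ sums — something the crude variance of $T_n$ does not give. For the first limit in (1) the paper simply cites \cite{Pit1}, whereas you supply a complete and standard argument ($n^{-\eps}$ union bound for the upper tail, $\exp(-n^{\eps})$ for the lower tail of $\max_j E_j$, combined with $S/n\to1$); that is a legitimate self-contained substitute for the citation.
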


The relation \eqref{fn(s)=} can be found in Feller \cite{Fel}, Ch. 1, for instance, but the 
inequality \eqref{fn(s,t)<} was new. Both of these relations were proved in \cite{Pit1} by using the
fact that the joint density of $L_1,\dots,L_{n-1}$ is $(n-1)!$ whenever the density is positive. As for
Lemma \ref{Asym:Ln^+,Tn}, (1), its proof was based on a classic equidistribution of $(L_1,\dots,L_n)$
and $\{W_j/\sum_k W_k: 1\le j\le n\}$, where $W_j$ are independent exponentials with parameter $1$, see \cite{Pit1} for the references. This equidistribution delivers the part (2) as follows.
Observe that $\ex[W]=1$, $\ex[W^2]=2$. Choose $a>2$ and $b<1$ such that $a/b^2<3$; for instance $a=2+1/7$ and $b=1-1/7$. Then, denoting $\mathcal W^{(\ell)}=\sum_j W_j^{\ell}$,
\begin{align*}
\pr(nT_n\ge 3)&=\pr\left(\frac{\mathcal W^{(2)}}{(\mathcal W^{(1)})^2}\ge \frac{3}{n}\right)
\le\! \pr\!\left(\mathcal W^{(2)}\ge an\text{ or }\mathcal W^{(1)}<bn\!\right)\\
&\le \!\pr\left(\!\mathcal W^{(2)}\ge an\!\right) +\pr\!\left(\!W^{(1)}<bn\!\right).
\end{align*}
By Chebyshev's inequality, each of these probabilities is of order $O(n^{-1})$, and then so is 
$\pr(nT_n\ge 3)$, which proves the part (2). (We note in passing that this probability is, in fact, much
smaller, certainly below $\exp(-c n^{1/3})$.)

\subsection{Upper bound for $P(\bold n)$}\label{upper}
To bound $P(\bold n)$ from above, we evaluate, asymptotically, the $2n_1$-dimensional integral  in Lemma \ref{extbasic}, integrating first over $\bold y$, and second over $\bold x$. Introduce $s=\sum_ix_i$,
$t=\sum_ix_i^2$, and $s_j=\sum_{i\neq j}x_i=s-x_j$, $t_j=\sum_{i\neq j}x_i^2$. Let $\int_1$,
$\int_2$
denote the contribution to $P(\bold n)$ coming from $(\bold x,\bold y)$ with $\tfrac{t}{s^2}\ge 3/n_1$,
and with $\tfrac{t}{s^2}\le 3/n_1$ respectively. Since $1-\alpha\le e^{-\alpha}$, we have
\begin{align*}
\int_1&\le \overbrace {\idotsint}^{n_1}_{\tfrac{t}{s^2}\ge 3/n_1}\left(\prod_j \int_0^1
e^{-ys_j}\,dy\right)e^{-(n_2-n_1)s}\, d\bold x\\
&=\overbrace {\idotsint}^{n_1}_{\tfrac{t}{s^2}\ge 3/n_1}\left(\prod_j\frac{1-e^{-s_j}}{s_j}\right)\cdot
e^{-(n_2-n_1)s}\,d\bold x.
\end{align*}
Now it is easy to check that
\begin{equation}\label{log,easy}
\left(\log\frac{1-e^{-\xi}}{\xi}\right)' \ge -\min\bigl\{\tfrac{1}{2},\tfrac{1}{\xi}\bigr\};
\end{equation}
therefore
\begin{equation*}
\sum_j\log\frac{1-e^{-s_j}}{s_j}\le n_1\log\frac{1-e^{-s}}{s}+2,\quad\forall\,s>0.
\end{equation*}
Applying Lemma \ref{densities}, \eqref{fn(s,t)<}, we obtain then
\begin{align}
\int_1&\le e^2\frac{\pr\bigl(T_{n_1}\ge 3/n_1\bigr)}{(n_1-1)!}\int_0^{\infty} s^{-1} I(s)\,ds,\notag\\
I(s)&:=e^{-(n_2-n_1)s}\, (1-e^{-s})^{n_1}.\label{I(s)=}
\end{align}
By Lemma \ref{Asym:Ln^+,Tn}, (2), the front probability is $O(n_1^{-1})$,
at most. In addition
\begin{equation}\label{(1-e^{-s})^{n1-1}}
\begin{aligned}
&\int_0^{\infty} s^{-1} I(s)\,ds\le \int_0^{\infty} e^{-(n_2-n_1)s}\,(1-e^{-s})^{n_1-1}\,ds\\
&=\int_0^1 z^{n_2-n_1-1}(1-z)^{n_1-1}\,dz=\frac{1}{(n_2-1)\binom{n_2-2}{n_1-1}}\\
&=\frac{n_2}{n_1}\cdot\frac{1}{(n_2-n_1)\binom{n_2}{n_1}}.
\end{aligned}
\end{equation}
Therefore
\begin{equation}\label{int1<expl}
\int_1\le_b \frac{1}{n_1\,n_1!}\cdot\frac{1}{(n_2-n_1)\binom{n_2}{n_1}}.
\end{equation}
(We use $F_n\le_b G_n$ to indicate that $F_n=O(G_n)$ when the expression for $G_n$ is too bulky.)

Turn to $\int_2$, i.e. the contribution to $P(\bold n)$ from $\bold x$ with $ts^{-2}<3n^{-1}$.  Using $1-\alpha\le e^{-\alpha-\alpha^2/2}$ this time, we need to bound, sharply,
\[
\prod_j\int_0^1\exp\bigl(-ys_j-y^2t_j/2\bigr)\,dy.
\]
Introduce a new variable $z=ys_j$ in the $j$-th factor of the product.  Using $s_j\le s$, $t_j\le t$, where appropriate, and integrating by parts once,  we have
\begin{multline*}
\int_0^1\exp\left(-ys_j-\frac{y^2t_j}{2}\right)\,dy\le \frac{1}{s_j}\int_0^{s_j}\!\exp\left(-z-z^2
\frac{t_j}{2s^2}\right)\,dz\\
\le  \frac{1}{s_j}\left(\int_0^s\!\exp\left(-z-z^2\frac{t_j}{2s^2}\right)\,dz-(s-s_j)e^{-s-t/2}\right)\\
=\frac{1}{s_j}\left(\!1-e^{-s-t_j/2}-\frac{t_j}{s^2}\int_0^{s}\!z\exp\left(-z-z^2\frac{t_j}{2s^2}\right)\,dz
-x_je^{-s-t/2}\!\right)\\
\le\frac{1-e^{-s-t/2}}{s_j}\left(\!1-\frac{\tfrac{t_j}{s^2}}{1-e^{-s-t/2}}\int_0^{s}\!z\exp\left(-z-z^2\frac{t_j}{2s^2}\right)\,dz-\frac{x_j}{e^{s+t/2}-1}\!\right)\\
\le\frac{1-e^{-s-t/2}}{s_j}\left(1-\frac{t_j}{s^2}F\bigl(s,\tfrac{t}{s^2}\bigr)-\frac{x_j}{e^{s+t/2}-1}\right)\\
\le\frac{1-e^{-s-t/2}}{s_j}\exp\left(-\frac{t_j}{s^2}F\bigl(s,\tfrac{t}{s^2}\bigr)-\frac{x_j}{e^{s+t/2}-1}\right),
\end{multline*}
where
\begin{equation}\label{F(s,u)=}
F(s,u):=\frac{1}{1-\exp\bigl(-s-s^2\tfrac{u}{2}\bigr)}\int_0^{s}\!z\exp\bigl(-z-z^2\tfrac{u}{2}\bigr)\,dz;
\end{equation}
in particular,
\begin{equation}\label{F(s,0)=}
F(s,0)=1-\frac{s}{e^s-1},\quad F(s,u)=F(s,0)(1+O(u)),\quad u\downarrow 0,
\end{equation}
uniformly for $s>0$.  Next,
\begin{align}
1-e^{-s-t/2}&=(1-e^{-s})\left(1+\frac{1-e^{-t/2}}{e^s-1}\right)\le
(1-e^{-s})\left(1+\frac{t/2}{e^s-1}\right)\notag\\
&\le (1-e^{-s})\exp\left(\frac{s^2}{2(e^s-1)}\,\frac{t}{s^2}\right),\label{e^{-s-t/2}}
\end{align}
so that
\[
\bigl(1-e^{-s-t/2}\bigr)^{n_1}\le (1-e^{-s})^{n_1}\exp\left(\frac{s^2}{2(e^s-1)}\,\frac{n_1t}{s^2}\right),
\]
with the last exponent bounded as $n_1\to\infty$, uniformly for all $\bold x$ in question, i.e.
meeting the constraint  $\tfrac{n_1t}{s^2}\le 3$. Also, as $\sum_j x_j=s$,
\begin{align*}
\prod_j s_j&=s^{n_1}\prod_j\left(1-\frac{x_j}{s}\right)=s^{n_1}\frac{\prod_{j'}\bigl(1-\tfrac{x_{j'}^2}{s^2}\bigr)}{\prod_{j}\bigl(1+\tfrac{x_{j}}{s}\bigr)}\\
&\ge e^{-1}\,s^{n_1} \left(1-\frac{t}{s^2}\right)\ge e^{-1}\,s^{n_1} \left(1-\frac{3}{n_1}\right).
\end{align*}
Collecting the bounds, and using $\sum_j t_j=(n_1-1)t$,
we get 
\begin{multline}\label{bound1}
\prod_j \int_0^1\exp\left(\!-ys_j-\frac{y^2t_j}{2}\right)\,dy\\
\lesssim e\left(\!\frac{1-e^{-s}}{s}\!\right)^{n_1}
\exp\left(\!-\frac{(n_1-1)t}{s^2}F(s,0)-\frac{s}{e^{s+t/2}-1}+\frac{s^2}{2(e^s-1)}\frac{n_1t}{s^2}\right).
\end{multline}
(We use $F_n\lesssim G_n$ to mean that $\limsup F_n/G_n\le 1$.)
And for the remaining factor $\prod_h \bar x_h$ in the integrand for $P(\bold n)$ we have 
\begin{align}
\prod_h \bar x_h^{n_2-n_1}&\le\exp\left(-(n_2-n_1)\sum_hx_h -\frac{n_2-n_1}{2}\sum_h x_h^2\right)\notag\\
&=\exp\left(-(n_2-n_1)s -\frac{(n_2-n_1)s^2}{2} \frac{t}{s^2}\right).\label{bound2}
\end{align}
Now, $s$ and $t$  are the generic values of the random variables $\mathcal S_{n_1}=\sum_jX_j$
and $\sum_j X_j^2$ respectively. By \eqref{fn(s,t)<}, the joint density of $\mathcal S_{n_1}$ and $\mathcal T_{n_1}=\mathcal S_{n_1}^{-2}\sum_j X_j^2$ is bounded above by $\tfrac{s^{n_1-1}}{(n_1-1)!}$ times the density of $T_{n_1}=\sum_j L_j^2$.  So integrating the product of the
bounds in \eqref{bound1} and \eqref{bound2} over $\bold x\in [0,1]^{n_1}$, meeting the
constraint $\tfrac{t}{s^2}\le 3$, we obtain
\begin{equation}\label{Intleq,<n1}
\begin{aligned}
&\qquad\int_2\le \frac{e}{(n_1-1)!}\int\limits_{0}^{n_1}\frac{I(s)}{s}\,\ex[U_{n_1}(s)]\,ds;\\
&U_{n_1}(s):=1_{\bigl\{T_{n_1}\le\tfrac{3}{n_1}\bigr\}}\exp\!\left(\!-\frac{(n_2-n_1)s^2}{2n_1}n_1T_{n_1}
\right.\\
&\left.-(n_1-1)T_{n_1}F(s,0)
-\frac{s}{\exp\bigl(s+s^2T_{n_1}/2\bigr)-1}+\frac{s^2}{2(e^s-1)}\,n_1T_{n_1}\!\!\right)\!;
\end{aligned}
\end{equation}
recall that $I(s)$ was defined in \eqref{I(s)=}.

The function $I(s)$ attains its  maximum at  $s(\bold n)=\log\tfrac{n_2}{n_2-n_1}$. We anticipate, but will have to prove, that the dominant contribution to the integral in \eqref{Intleq,<n1} comes from  
\begin{equation}\label{mathcalI=}
s\in \mathcal I(\bold n):=\bigl[(1-\delta(\bold n))s(\bold n), (1+\delta(\bold n))s(\bold n)\bigr], 
\end{equation}
for some $\delta(\bold n)\to 0$.

To this end, we need to have a close look at $\ex\bigl[U_{n_1}(s)\bigr]$. First, throwing out the
negative summands from the exponent in the expression \eqref{Intleq,<n1} for $U_{n_1}(s)$,
we have
\begin{align*}
U_{n_1}(s)&\le 1_{\bigl\{T_{n_1}\le\tfrac{3}{n_1}\bigr\}}\cdot\sup_{s>0}\exp\left(\frac{s^2}{2(e^s-1)}n_1T_{n_1}\right)\\
&\le \sup_{s>0}\exp\left(\frac{3s^2}{2(e^s-1)}\right)<\infty.
\end{align*}
Consider $s\in \mathcal I(\bold n)$. For the first term 
in the exponent for $U_{n_1}(s)$,
\begin{align*}
\frac{(n_2-n_1)s^2}{2}T_{n_1}&=\frac{(n_2-n_1)s^2(\bold n)}{2n_1}n_1T_{n_1}
+n_1T_{n_1}O\left(\delta(\bold n)\frac{s^2(\bold n)(n_2-n_1)}{n_1}\right)\\
&=\frac{(n_2-n_1)s^2(\bold n)}{2n_1}(n_1T_{n_1})+o(1),
\end{align*}
because 
\[
\frac{s^2(\bold n)(n_2-n_1)}{n_1}=\frac{(n_2-n_1)\log^2\tfrac{n_2}{n_2-n_1}}{n_1}=O(1)
\]
for $n_2>n_1\to\infty$ and $n_1T_{n_1}\le 3$. Furthermore
\begin{align*}
F(s,0)&=F(s(\bold n),0)+O\Bigl(\delta(\bold n)s(\bold n)e^{-s(\bold n)(1-\delta(\bold n))}\Bigr)
=F(s(\bold n),0)+o(1).
\end{align*}
For $T_{n_1}\le \tfrac{3}{n_1}$, two remaining terms in the exponent for $U_{n_1}(s)$ are bounded for all $s>0$ as well.
So, using $n_1T_{n_1}\to 2$ in probability, by  the bounded convergence theorem,
we obtain:
uniformly for $s\in \mathcal I(\bold n)$,
\begin{align*}
\lim \ex\bigl[U_{n_1}(s)\bigr]&=\exp\left(-\lim \frac{(n_2-n_1)s(\bold n)^2}{n_1}-2\lim F(s(\bold n),0)\right.\\
&\qquad\qquad\qquad+\left.\lim\frac{s^2(\bold n)-s(\bold n)}{e^{s(\bold n)}-1}\right),
\end{align*}
where $n_2>n_1\to \infty$ in such a way that all three limits on the RHS exist. Notice that,
since $s(\bold n)=\log\tfrac{n_2}{n_2-n_1}$, the
limits are bounded by absolute constants whether $\limsup s(\bold n)$ is finite or infinite.
Effectively this means that  $\ex\bigl[U_{n_1}(s)\bigr]\sim e^{-h(\bold n)}$, uniformly for $s\in \mathcal I(\bold n)$, where, by \eqref{F(s,0)=},
\begin{equation}\label{h(n)=}
\begin{aligned}
h(\bold n)&:=\frac{(n_2-n_1)s^2(\bold n)}{n_1}+2F(s(\bold n),0)+\frac{s(\bold n)-s^2(\bold n)}{\exp(s(\bold n))-1}\\
&= 2+\frac{(n_2-n_1)s^2(\bold n)}{n_1}-\frac{s(\bold n)+s^2(\bold n)}{e^{s(\bold n)}-1}\\
&=2-\frac{s(\bold n)}{e^{s(\bold n)}-1}.
\end{aligned}
\end{equation}
provided that  $n_2>n_1\to\infty$.  Consequently
\begin{equation}\label{IntI(s)}
\begin{aligned}
\int\limits_{s\in \mathcal I(\bold n)}\!\!\!\frac{I(s)}{s}\,\ex\bigl[U_{n_1}(s)\bigr]\,ds
&\sim\frac{e^{-h(\bold n)}}{s(\bold n)}
 \int\limits_{s\in \mathcal I(\bold n)}\!\!\!I(s)\,ds.
\end{aligned}
\end{equation}
The next step is to evaluate, asymptotically, the integral in \eqref{IntI(s)}.

Substituting $z=e^{-s}$, we rewrite
\begin{align*}
\int\limits_{s\in\mathcal I(\bold n)}\!\!\! I(s)\,ds&=\int\limits_{z\in [z_1,z_2]}\!\!\!K(z)\,dz,
\quad K(z):=z^{n_2-n_1-1}(1-z)^{n_1},\\
 z_{1,2}&:=\left(\!\frac{n_2-n_1}{n_2}\!\right)^{1\pm\delta(\bold n)}.
\end{align*}
Now
\begin{equation}\label{intK(z)=}
\int\limits_{z\in [0,1]}K(z)\,dz=\frac{1}{(n_2-n_1)\binom{n_2}{n_1}},
\end{equation}
and we need to bound the contributions of the two tail integrals, over $[0,z_1]$ and $[z_2,1]$. 

Consider first the case  $n_2-n_1\ll n_2$, i.e. $s(\bold n)\to\infty$. We have
\begin{equation}\label{weak}
\begin{aligned}
\int\limits_{z\in [0,z_1]}\!\!\!\!K(z)\,dz&\le \int\limits_{z\in [0,z_1]}\!\!z^{n_2-n_1-1}\,dz\\
&=\frac{z_1^{n_2-n_1}}{n_2-n_1}=\left(\frac{n_2-n_1}{n_2}\right)^{(1+\delta(\bold n))(n_2-n_1)}.
\end{aligned}
\end{equation}
So, using \eqref{intK(z)=} and $\binom{b}{a}\le \left(\tfrac{eb}{a}\right)^a$, $m^{1/m}\le e^{1/e}$,
we have 
\begin{equation}\label{[0,z1]ratio}
\frac{\int_0^{z_1} K(z)\,dz}{\int_0^1 K(z)\,dz} \le\eps_1(\bold n):=\exp\bigl(-0.99\delta(\bold n)s(\bold n)(n_2-n_1)\bigr)\to 0,
\end{equation}
if $\delta(\bold n)=s^{-b} (\bold n)$, $b<1$; ($\delta(\bold n)\to 0$ obviously). Next, since
for such  $\delta(\bold n)$ 
\[
n_1z_2=n_1\!\left(\!\frac{n_2-n_1}{n_2}\!\right)^{1-\delta(\bold n)}=\frac{n_1(n_2-n_1)}{n_2}
\left(\!\frac{n_2}{n_2-n_1}\!\right)^{\delta(\bold n)}\to\infty,
\]
we bound 
\begin{align*}
\int_{z_2}^1 K(z)\,dz&\le\int_{z_2}^{\infty} z^{n_2-n_1-1}\,e^{-n_1z}\,dz\le\frac{1}{n_1^{n_2-n_1}}\int_{n_1z_2}^{\infty} y^{n_2-n_1}\,e^{-y}\,dy\\
&=O\bigl(z_2^{n_2-n_1}e^{-n_1z_2}\bigr);
\end{align*}
the last estimate follows from
\[
\max_{y\ge n_1z_2}\frac{d \log(y^{n_2-n_1}\,e^{-y})}{dy}=-1+\frac{n_2-n_1}{n_1z_2}\to -1.
\]
It follows easily that
\begin{equation}\label{[z2,1]ratio}
\begin{aligned}
\frac{\int_{z_2}^1 K(z)\,dz}{\int_0^1 K(z)\,dz}&\le \exp\left(-0.99(n_2-n_1)\bigl(\tfrac{n_2}{n_2-n_1}\bigr)^{\delta(\bold n)}\right)\\
&=\exp\Bigl(-0.99(n_2-n_1)e^{\delta(\bold n)s(\bold n)}\Bigr)=:\eps_2(\bold n).
\end{aligned}
\end{equation}
By \eqref{[0,z1]ratio} and \eqref{[z2,1]ratio}, if $\delta(\bold n)s(\bold n)\to\infty$ then
\begin{equation}\label{znotinz1,z2}
\frac{\int_{z\notin [z_1,z_2]}K(z)\,dz}{\int_0^1 K(z)\,dz}\le\eps(\bold n):=\max\{\eps_1(\bold n),\eps_2(\bold n)\}=
\eps_1(\bold n)\to 0.
\end{equation}
So, by \eqref{intK(z)=},  for $n_2\gg n_2-n_1$ we have
\begin{equation}\label{equiv,error,n2>>}
\int\limits_{s\in \mathcal I(\bold n)}\!\!I(s)\,ds=\int\limits_{z\in [z_1,z_2]}\!\!K(z)\,dz\sim\int\limits_{z\in [0,1]}\!\!K(z)\,dz=\frac{1}{(n_2-n_1)\binom{n_2}{n_1}},
\end{equation}
and the {\it relative\/} contribution of the tail $s\notin \mathcal I(\bold n)$ is of order $\eps(\bold n)$.

Let us prove that, for an appropriate $\delta(\bold n)\to 0$, (1) the equation \eqref{equiv,error,n2>>} holds also when $n_2-n_1=\theta(n_2)$ and (2) the relative weight
of $s\notin \mathcal I(\bold n)$ is at most some alternative $\eps(\bold n)\to 0$.  The integrand
$K(z)$ attains its maximum at $z^*=\tfrac{n_2-n_1-1}{n_2-1}$, and
\begin{equation}\label{K=binom}
K(z^*)=\frac{(n_2-n_1-1)^{n_2-n_1-1}n_1^{n_1}}{(n_2-1)^{n_2-1}}=O\left(n_1^{-1/2}\binom{n_2}{n_1}^{-1}\right).
\end{equation}
Furthermore
\begin{align*}
z_{1,2}-z^*&=\left(\frac{n_2-n_1}{n_2}\right)^{1\pm \delta}-\frac{n_2-n_1-1}{n_2-1}\\
&=\mp\,\theta(\delta n_1/n_2)\bigl(1+O((\delta n_1)^{-1}+\delta n_1/n_2)\bigr);
\end{align*}
the remainder term is $o(1)$ if, in addition to $\delta(\bold n)\to 0$, we impose
the condition $\delta(\bold n)n_1\to\infty$. Simple calculus shows then that
\begin{align*}
\log K(z_{1,2})=\log K(z^*)-\theta(\delta^2n_1),\quad
\left.\frac{d \log K(z)}{dz}\right|_{z=z_{1,2}}=\pm\,\theta(\delta n_2).
\end{align*}
Since $\log K(z)$ is concave, we have then
\begin{align*}
\log K(z)& \le \log K(z_1)+\theta(\delta n_2)(z-z_1),\quad z\le z_1,\\
\log K(z)& \le \log K(z_2)-\theta(\delta n_2)(z-z_2),\quad z\ge z_2.
\end{align*}
Therefore
\begin{equation}\label{znotinz1,z2}
\int\limits_{z\notin [z_1, z_2]}K(z)\,dz\le K(z^*)\,\frac{\exp(-\theta(\delta^2(\bold n)n_1))}{\theta(\delta(\bold n)n_2)}.
\end{equation}
Using  \eqref{K=binom} and \eqref{znotinz1,z2},  we see that the ratio of the upper bound above to 
the integral in \eqref{intK(z)=} is of order
\begin{equation}\label{eps(n),2}
\eps(\bold n):=\frac{\exp\bigl(-\theta(\delta^2(\bold n)n_1)\bigr)}{\delta(\bold n)n_1^{1/2}},
\end{equation}
which tends to zero if, for instance, $\delta(\bold n)=n_1^{-a}$, $a<1/2$. So indeed the equation 
\eqref{equiv,error,n2>>} holds for $\delta(\bold n)=n_1^{-a}$, $a<1/2$, and the relative contribution of the tail $s\notin\mathcal I(\bold n)$ is at most this $\eps(\bold n)$. 

Invoking \eqref{IntI(s)}, we see that if $n_2>n_1\to\infty$, then for  $\mathcal I(\bold n)$ defined in
\eqref{mathcalI=}
\begin{equation}\label{intI(n)sim}
\int\limits_{s\in \mathcal I(\bold n)}\frac{I(s)}{s}\,\ex\bigl[U_{n_1}(s)\bigr]\,ds\sim \frac{e^{-h(\bold n)}}
{s(\bold n)(n_2-n_1)\binom{n_2}{n_1}},
\end{equation}
if, picking $a<1/2$ and $b<1$, we define
\begin{equation}\label{delta(n)=}
\delta(\bold n):=\left\{\begin{aligned}
&s(\bold n)^{-b},&&\text{if }s(\bold n)\to\infty,\\
&n_1^{-a},&&\text{if }s(\bold n)=O(1).\end{aligned}\right.
\end{equation}
Finally, since $\ex\bigl[U_{n_1}(s)\bigr]=O(1)$ uniformly for $s>0$, and $\tfrac{1-e^{-s}}{s}<1$,
we  get
\[
\int\limits_{s\notin \mathcal I(\bold n)}\frac{I(s)}{s}\,\ex\bigl[U_{n_1}(s)\bigr]\,ds\le_b \int\limits_{z\notin [z_1,z_2]}
z^{n_2-n_1-1}(1-z)^{n_1-1}\,dz.
\]
Like the integrals of $K(z)$, (for $\eps(\bold n)$ defined, correspondingly, for $n_2\gg
n_2-n_1$ and $n_2-n_1=\theta(n_2) $), the RHS integral is of order
\begin{multline}\label{intnewK<}
\eps(\bold n)\int_0^1z^{n_2-n_1-1}(1-z)^{n_1-1}\,dz=\eps(\bold n)\cdot\frac{n_2/n_1}{(n_2-n_1)\binom{n_2}{n_1}}\\
\le_b\eps(\bold n)\frac{n_2}{n_1}\log\frac{n_2}{n_2-n_1}\int_{s\in \mathcal I(\bold n)}\frac{I(s)}{s}\,\ex\bigl[U_{n_1}(s)\bigr]\,ds.
\end{multline}
For $n_2\gg n_2-n_1$, i.e. $s(\bold n)\to\infty$, the outside factor is asymptotic to $\eps(\bold n)s(\bold n)$. So, by definition of $\eps(\bold n)$ in \eqref{znotinz1,z2}, and $\delta(\bold n)$ in 
\eqref{delta(n)=}, this factor is of order 
\begin{equation}\label{factor,n2>>n2-n1}
s(\bold n)\exp\bigl(-0.5 s(\bold n)^{1-b}\bigr)=\exp\bigl(-\theta(s(\bold n)^{1-b})\bigr).
\end{equation}
For $n_2-n_1=\theta(n_2)$, by definition of $\eps(\bold n)$ in \eqref{eps(n),2}, and $\delta(\bold n)$ in \eqref{delta(n)=}, the factor is of order
\begin{equation}\label{factor,n2not>>n2-n1}
\eps (\bold n)=\exp\bigl(-\theta(n_1^{1-2a})\bigr). 
\end{equation}

We conclude that if $n_2>n_1\to\infty$ then the bound \eqref{Intleq,<n1} becomes 
\begin{equation}\label{Int2lesssim}
\int_2\lesssim \frac{e^{1-h(\bold n)}}{(n_1-1)!(n_2-n_1)s(\bold n)}\binom{n_2}{n_1}^{-1},
\end{equation}
where $s(\bold n)=\log\tfrac{n_2}{n_2-n_1}$, and $h(\bold n)$ is defined in \eqref{h(n)=}. In
particular 
\[
1-h(n)=-\frac{e^{s(\bold n)}-1-s(\bold n)}{e^{s(\bold n)}-1}.
\]

Combining \eqref{Int2lesssim} and \eqref{int1<expl}, and using $s(\bold n)=O(\log n_1)$,  we obtain
\begin{equation}\label{P(n)lesssim}
P(\bold n)=\int_1+\int_2\lesssim \frac{e^{1-h(\bold n)}}{(n_1-1)!(n_2-n_1)s(\bold n)}\binom{n_2}{n_1}^{-1}.
\end{equation}
Moreover, the contribution to $P(\bold n)$ coming from $s\notin\mathcal I(\bold n)$ and scaled by 
the RHS in \eqref{P(n)lesssim} is of order given by \eqref{factor,n2>>n2-n1} and \eqref{factor,n2not>>n2-n1}
for $n_2\gg n_2-n_1$ and $n_2\not\gg n_2-n_1$, respectively.\\

{\bf Note.\/} Observe that for $n_2/n_1\to\infty$ we have $s(\bold n)\sim\tfrac{n_1}{n_2}\to 0$, so 
$h(\bold n)\to 1$, and
\[
P(\bold n)\lesssim \left(\binom{n_2}{n_1}n_1!\right)^{-1},
\]
meaning that
\[
\ex\bigl[S(\bold n)\bigr] =\binom{n_2}{n_1}n_1!\cdot P(\bold n)\lesssim 1.
\]
Since $S(\bold n)\ge 1$, we see that $\pr\bigl(S(\bold n)=1\bigr)\to 1$.

\subsection{Lower bound for $P(\bold n)$}
It remains to prove a matching lower bound  for $P(\bold n)$.  For $\eps>0$, let $D=D(\eps)$ be a set of $\bold x=(x_1,\dots,x_{n_1})\ge\bold 0$ defined by the constraints
\begin{align}
&s(\bold n)(1-\delta(\bold n)) \le s \le s(\bold n)(1+\delta(\bold n)),\quad s:=\sum_{j=1}^{n_1} x_j,\label{b1}\\
&s^{-1}x_j\le (1+\eps)\frac{\log n_1}{n_1},\quad 1\le j\le n_1,\label{b2}\\
&(1-\eps)\frac{2}{n_1}\le s^{-2}t\le (1+\eps)\frac{2}{n_1},\quad t:=\sum_{j =1}^{n_1}x_j^2. \label{b3}
\end{align}
Here $\delta(\bold n)$ is defined in \eqref{delta(n)=}; so the constraint \eqref{b1} can be stated as $s\in\mathcal I(\bold n)$, see \eqref{mathcalI=}.  
As $s(\bold n)=O(\log n_1)$, the constraints \eqref{b1}, \eqref{b2} imply that
\begin{equation}\label{b4}
\max_j x_j\le (1+\eps)\frac{2 s(\bold n)\log n_1}{n_1}=O(\sigma_{n_1}), \quad \sigma_{n_1}:=n_1^{-1}\log^2 n_1. 
\end{equation}
Since $\sigma_{n_1}\to 0$, $D$ is a subset of the cube $[0,1]^{n_1}$ for $n_1$ large enough.
Likewise the constraints \eqref{b1} and \eqref{b3} imply that
\begin{equation}\label{b5}
t=O\bigl(n_1^{-1}\log^2 n_1\bigr).
\end{equation}

Clearly $P(\bold n)\ge P(\bold n,\eps)$ where $P(\bold n,\eps)$ is the contribution to the integral in the formula for $P(\bold n)$,
(see Lemma \ref{extbasic}), coming from $D$, i.e.
\[
P(\bold n,\eps)=\overbrace {\idotsint}^{n_1}_{\bold x\in D}
\left(\prod_{j=1}^{n_1}\int_0^1\prod_{i\neq j}(1-x_iy_j)\,dy_j\!\!\right)\!\prod_{h=1}^{n_1}\bar x_h^{n_2-n_1}\,d\bold x.
\]
Here, using \eqref{b4}, and then \eqref{b1}, \eqref{b3},
\begin{align*}
\prod_{h=1}^{n_1}\bar x_h^{n_2-n_1}&=\exp\Bigl(-(n_2-n_1)\sum_h\bigl(x_h +\tfrac{x_h^2}{2}+
O(x_h^3)\bigr)\Bigr)\\
&\ge\exp\Bigl(-(n_2-n_1)s +\frac{(n_2-n_1)t}{2}(1+O(\sigma_{n_1}))\Bigr)\\
&=\exp\Bigl(-(n_2-n_1)s+\frac{(n_2-n_1)s^2}{n_1}+O(\eps)\Bigr),
\end{align*}
as the last fraction is uniformly bounded for $s$ meeting \eqref{b1}. Likewise
\begin{align*}
\prod_{i\neq j}(1-x_iy_j)&\ge\prod_{i=1}^{n_1} \exp\Bigl(-y_js_j-y_j^2t\,\frac{1+O(\sigma_{n_1})}{2}\Bigr)\\
&=\prod_{i=1}^{n_1} \exp\Bigl(-y_js_j-y_j^2\,\frac{t}{2}+O(\sigma^2_{n_1})\Bigr),
\end{align*}
where 
\[
s_j:=\sum_{i\neq j}x_i=s-x_j=s\bigl(1-O(n_1^{-1}\log n_1)\bigr).
\]
Since
\[
s_j=s\exp\left(\log\left(1-\frac{x_j}{s}\right)\right)=s\exp\left(-\frac{x_j}{s}+O(x_j^2/s^2)\right),
\]
we have
\begin{equation}\label{prodsj=}
\prod_j s_j=e^{-1}s^{n_1}\exp\bigl(O(ts^{-2})\bigr)=e^{-1}s^{n_1}\bigl(1+O(n_1^{-1})\bigr).
\end{equation}
Therefore, for each $j$,
\begin{equation*}
\begin{aligned}
&I_j(\bold x):=\int_0^1\prod_{i\neq j}(1-x_iy_j)\,dy_j\ge\bigl(1+O(\sigma^2_{n_1})\bigr)\int_0^1\!\!\exp\bigl(\!-ys_j-y^2t/2\bigr)\,dy\\
&\ge\bigl(1+O(\sigma^2_{n_1})\bigr) s_j^{-1}\int_0^{s_j}\!\!\exp\left(\!\!-z-z^2\frac{t}{2s_j^2}\!\right) dz\\
&=\bigl(1+O(\sigma^2_{n_1})\bigr) s_j^{-1}\int_0^{s_j}\!\!\exp\left(\!\!-z-z^2\frac{t}{2s^2}\!\right) dz\\
&\ge\bigl(1+O(\sigma^2_{n_1})\bigr) s_j^{-1}\left(\int_0^{s}\exp\left(\!\!-z-z^2\frac{t}{2s^2}\!\right)
dz -x_je^{-s-t/2}\bigl(1+O(\sigma_{n_1})\bigr)\!\!\right)\!\!.
\end{aligned}
\end{equation*}
Here, as $ts^{-2}=O(n_1^{-1})$, analogously to \eqref{e^{-s-t/2}} we obtain 
\[
1-e^{-s-t/2}=(1-e^{-s})\exp\left(\frac{t}{s^2}\frac{s^2}{2(e^s-1)}+O(n_1^{-2})\right);
\]
also 
\[
\frac{x_je^{-s-t/2}}{1-e^{-s-t/2}}=\frac{x_j}{e^{s+t/2}-1}\le \frac{x_j}{e^s-1}=O(x_j/s)=O(n_1^{-1}\log n_1).
\]
So it follows easily that
\begin{align*}
I_j(\bold x)&\ge \bigl(1+O(\sigma^2_{n_1})\bigr)\left[1-\frac{t/s^2}{1-e^{-s}}\int_0^s ze^{-z}\,dz-
\frac{x_j}{e^s-1}+\frac{t}{s^2}\frac{s^2}{2(e^s-1)}\right].
\end{align*}
Thus, using \eqref{prodsj=},
\begin{align*}
\prod_jI_j(\bold x)&\gtrsim e\!\left(\!\frac{1-e^{-s}}{s}\!\right)^{n_1}\!\exp\!\left(\!\!-\frac{n_1t/s^2}{1-e^{-s}}\int_0^s\!\!ze^{-z}dz
-\frac{s}{e^s-1}+\frac{s^2}{2(e^s-1)}\frac{n_1t}{s^2}\!\right).
\end{align*}
Recalling the constraint \eqref{b3}, we have: uniformly for $\bold x\in D$,
\begin{align}
I(\bold x)&:=\prod_{j=1}^{n_1}\bar x_j^{n_2-n_1}I_j(\bold x)\gtrsim(1+O(\eps))\,e^{1-H(s)}I(s),
\label{I(x)gtrsim}\\
H(s)&:= \frac{n_2-n_1}{n_1}s^2-\frac{2}{1-e^{-s}}\int_0^s ze^{-z}dz +\frac{s^2-s}{e^s-1}\notag\\
&\,\,=\frac{n_2-n_1}{n_1} s^2-2+\frac{s+s^2}{e^s-1}.\notag
\end{align}
And we already proved in the subsection \ref{upper} that, uniformly for $s$ satisfying \eqref{b1},
$H(s)=h(\bold n)+o(1)$, with $h(\bold n)$ given by \eqref{h(n)=}.

To lower-bound $P(\bold n,\eps)$ we integrate the RHS of \eqref{I(x)gtrsim}, with $h(\bold n)$ 
instead of $H(s)$, over $D$. To do so, we switch to $n_1$ new variables $u,\,v_1,\dots, v_{n_1-1}$:
\[
u=\sum_{j=1}^{n_1} x_j, \quad v_j=x_js^{-1},\,\, 1\le j\le n_1-1.
\]
Define also $v_{n_1}=x_{n_1}s^{-1}$. Clearly $0\le v_j\le 1$ and $\sum_{j=1}v_j^{n_1}=1$. The
Jacobian of $(x_1,\dots,x_{n_1})$ with respect to $(u,v_1,\dots,v_{n_1-1})$ is $u^{n_1-1}$.
The constraints \eqref{b1}-\eqref{b3} become
\begin{align}
&(1-\delta(\bold n))s(\bold n)\le u\le (1+\delta(\bold n))s(\bold n), \label{b1new}\\
&v_j\le (1+\eps)\frac{\log n_1}{n_1},\,\,(1\le j\le n_1);\quad \sum_{j=1}^{n_1}v_j=1, \label{b2new}\\
&(1-\eps)\frac{2}{n_1}\le \sum_{j=1}^{n_1}v_j^2\le (1+\eps)\frac{2}{n_1}.\label{b3new}
\end{align}
Obviously, but crucially, none of these constraints involves both $u$ and $\bold v$. Therefore
\begin{equation*}
P(\bold n,\eps)\ge \frac{(1+O(\eps))\,e^{1-h(\bold n)}}{(n_1-1)!} \int\limits_{s\in \mathcal I(\bold n)}\!\!\!\!\!u^{-1}I(u)\,du\, \cdot\! \int\limits_{\bold v\in \mathcal D}\!(n_1-1)!\,\prod_{j=1}^{n_1-1}dv_j,
\end{equation*}
where $\mathcal D$ denotes the set of all $\bold v=(v_1,\dots,v_{n_1})$ meeting the constraints
\eqref{b2new} and \eqref{b3new}.
As we mentioned earlier, $(n_1-1)!$ is the joint density of the  subintervals lengths $L_1,\dots, L_{n_1-1}$ in the random partition of the interval $[0,1]$ by $n_1-1$ points chosen uniformly at random. Therefore
\begin{multline*}
\int_{\mathcal D}(n_1-1)!\,d\bold v\\
=\!\!\pr\!\Biggl(\!\!\left\{L^+_{n_1}\le (1+\eps)\frac{\log n_1}{n_1}\right\}
\bigcap\Bigl\{(1-\eps)\frac{2}{n_1}\le \sum_jL_j^2\le (1+\eps)\frac{2}{n_1}\Bigr\}\!\!\Biggr),
\end{multline*}
which tends to $1$, as $n_1\to\infty$. Furthermore, it was proved in Section \ref{upper}, \eqref{intI(n)sim},  that 
\[
\int\limits_{s\in \mathcal I(\bold n)}\!\!\! u^{-1}I(u)\,du\sim \frac{1}{s(\bold n)(n_2-n_1)\binom{n_2}{n_1}}.
\]
Thus, for every $\eps>0$,
\begin{align*}
P(\bold n)&\ge P(\bold n,\eps)\ge 
(1+O(\eps)) \frac{\exp\Bigl(-\tfrac{e^{s(\bold n)}-1-s(\bold n)}{e^{s(\bold n)}-1}\Bigr)}{(n_1-1)!(n_2-n_1)\binom{n_2}{n_1}s(\bold n)},
\end{align*}
implying that
\begin{equation*}
P(\bold n)\gtrsim \frac{\exp\Bigl(-\tfrac{e^{s(\bold n)}-1-s(\bold n)}{e^{s(\bold n)}-1}\Bigr)}{(n_1-1)!(n_2-n_1)\binom{n_2}{n_1}s(\bold n)}.
\end{equation*}

Combining this estimate with \eqref{P(n)lesssim} we have 
\begin{equation}\label{P(n)sim}
P(\bold n)\sim \frac{\exp\Bigl(-\tfrac{e^{s(\bold n)}-1-s(\bold n)}{e^{s(\bold n)}-1}\Bigr)}{(n_1-1)!(n_2-n_1)\binom{n_2}{n_1}s(\bold n)}.
\end{equation}

Since $\ex[S(\bold n)]$, the expected value of $S(\bold n)$, the number of stable matchings,
is $P(\bold n)\binom{n_2}{n_1}n_1!$, we proved Theorem \ref{E(S(n))sim}: if $n_2>n_1\to\infty$ then
\[
\ex[S(\bold n)]\sim \frac{n_1\exp\Bigl(-\tfrac{e^{s(\bold n)}-1-s(\bold n)}{e^{s(\bold n)}-1}\Bigr)}{(n_2-n_1)s(\bold n)},\qquad s(\bold n)=\log\frac{n_2}{n_2-n_1}.
\]

\section{Proof of Theorem \ref{Q(M)appr}}

Recall that $\min_{\mathcal M}Q(\mathcal M)$ is
the wives' rank $Q(\mathcal M_1)$ in the men-optimal stable matching. So $Q(\mathcal M_1)$ is distributed as the 
total number of proposals by men to women. Now, analogously to the balanced case (Wilson
\cite{Wil}), $Q(\mathcal M_1)$ is {\it stochastically dominated\/} by $N$, the number of consecutive random throws of balls, a ball per throw, into $n_2$ boxes till the moment when there are
exactly $n_1$ non-empty boxes.  (The difference between $N$ and $Q(\mathcal M_1)$ is the total
number of {\it redundant\/} proposals made by men to women who had rejected them earlier.)  $N$ is  distributed as the sum of $n_1$ independent
Geometrics with success probabilities $p_j: =\tfrac{n_2-j}{n_2}$, $0\le j\le n_1-1$. So
\[
\ex[N]=\sum_{j=0}^{n_1-1}\frac{1}{p_j}=n_2\bigl(H_{n_2}-H_{n_2-n_1}\bigr)\sim n_2s(\bold n),
\]
$s(\bold n)=\log\tfrac{n_2}{n_2-n_1}$, if $n_2>n_1\to\infty$. It can be proved that $N$ is sharply concentrated around $\ex[N]$. We will not do it,
but instead will use Lemma \ref{extbasic} to prove  that w.h.p. all $Q(\mathcal M)$ are sharply concentrated around $n_2s(\bold n)$.

Let us start with bounding $\min_{\mathcal M}Q(\mathcal M)$ from below.  To this
end, observe first that, for $k\ge n_1$,
\begin{equation}\label{P(Q(M1)<)<}
\pr(\min_{\mathcal M} Q(\mathcal M)\le k)\le \binom{n_2}{n_1}n_1! \sum_{\kappa<k}P_{\kappa}(\bold n); 
\end{equation}
$P_{\kappa}(\bold n)$ is the probability that a generic injection from $[n_1]$ to $[n_2]$ is stable.
We want to show the RHS is vanishing in the limit  for $k=(1-\delta(\bold n))(n_2-1)s(\bold n)$.
Here $\delta(\bold n)$ is defined in \eqref{delta(n)=}.

By Lemma \ref{extbasic},
\[
P_{\kappa}(\bold n)=\!\overbrace {\idotsint}^{2n_1}_{\bold x,\,\bold y\in [0,1]^{n_1}}\!\!
[\xi^{\kappa-n_1}] \prod_{i\neq j}\bigl(1-x_i(1-\xi+\xi y_j)\bigr)\!\prod_h\bar x_h^{n_2-n_1}\,d\bold x\, d\bold y.
\]
This identity is perfectly suited to application of  Chernoff's method. Denoting 
$P^-_{k}(\bold n):=\sum_{\kappa\le k}P_{\kappa}(\bold n)$, we have 
\begin{align}
P^-_{k}(\bold n)&\le 
\!\overbrace {\idotsint}^{2n_1}_{\bold x,\,\bold y\in [0,1]^{n_1}}
\inf\!\Big\{\!\Phi(\xi,\bold x,\bold y): \xi\in (0,1]\!\Bigr\}\,d\bold x\, d\bold y,\label{sum,kappa<Phi}\\
\Phi(\xi,\bold x,\bold y)&:=\xi^{n_1-k}\!\!\!\!\prod_{1\le i\neq j\le n_1}\!\!\!
\bigl(1-x_i(1-\xi+\xi y_j)\bigr)\!\prod_h\bar x_h^{n_2-n_1}\,d\bold x\, d\bold y.
\label{Phi=}
\end{align}
We will not try to determine the best $\xi=\xi(\bold x,\bold y)$, and focus instead on a judicious choice of $\xi$ dependent only on $s=\sum_ix_i$. Using 
\[
1-x_i(1-\xi+\xi y_j)\le \exp\bigl(-x_i(1-\xi+\xi y_j)\bigr),\quad \bar x_h\le e^{-x_h}, 
\]
and integrating over $\bold y\in [0,1]^{n_1}$, we obtain 
\[
P^-_{k}(\bold n)\le\!\overbrace {\idotsint}^{n_1}_{\bold x\in [0,1]^{n_1}}
\inf_{\xi\in (0,1]}\Bigl\{\xi^{n_1-k}\exp\bigl[s(\xi(n_1-1)-n_2+1)\bigr]\prod_{j=1}^{n_1}
\frac{1-e^{-\xi s_j}}{\xi s_j}\Bigr\}d\bold x,
\]
$(s_j=\sum_{i\neq j}x_i)$. Here, by \eqref{log,easy} and $\xi\le 1$,
\[
\prod_{j=1}^{n_1}\frac{1-e^{-\xi s_j}}{\xi s_j}\le e^2 \left(\frac{1-e^{-\xi s}}{\xi s}\right)^{n_1};
\]
so with the product replaced by its bound, the integrand becomes a function of $s$ only.
Applying \eqref{fn(s)<} in Lemma \ref{densities}, we obtain then
\begin{align}
P^-_{k}(\bold n)&=O(1)\frac{1}{(n_1-1)!}\int_0^{n_1}\!\!\inf_{\xi\in (0,1]}\exp\bigl(H(s,\xi)\bigr)\,ds,
\label{Pk^-simple}\\
H(s,\xi)&:=s\bigl[\xi(n_1-1)-n_2+1\bigr]+n_1\log(1-e^{-\xi s})\notag\\
&\quad\,\,-\log s-k\log \xi.\label{H(s,xi)}
\end{align}
We are willing to bound $\inf_{\xi<1}e^{H(s,\xi)}$
by the value of $e^{H(s,\xi)}$ at  
a stationary point of $H(s,\xi)$ considered as a function of $\xi\in [0,1)$, hoping that this will
the minimum point of this function. Now
\[
H_{\xi}(s,\xi)=s(n_1-1)+sn_1\bigl(e^{\xi s}-1)^{-1}-k\xi^{-1}=0,
\]
if $t:=\xi s$ satisfies an equation
\begin{equation}\label{h(x)=}
h(t)=k,\quad h(u):=u\bigl[(n_1-1)+n_1(e^u-1)^{-1}\bigr].
\end{equation}
Since $h(0+)=n_1<k$ and $h(\infty)=\infty$, a root $t$ does exist, and it is unique, since 
$h'(u)\ge h'(0+)=\tfrac{n_1}{2}-1>0$. So ideally we would like to select $\xi(s)=t/s$ for $s\ge t$ and use the fall-back $\xi(s)\equiv 1$ for $s<t$. The technical issue here is necessity to deal with an implicitly defined $t$ as the root of $h(u)=k$. Observe that 
\[
h(s(\bold n))=s(\bold n)\left[(n_1-1)+\frac{n_1}{e^{s(\bold n)}-1}\right]=(n_2-1)s(\bold n)=\frac{k}{1-\delta(\bold n)}.
\]
So let us try our luck with the explicit $t:=(1-\delta(\bold n))s(\bold n)$, as an approximation for that implicit
root, selecting $\xi(s)=t/s$ for $s\ge t$ and $\xi(s)=1$ for $s<t$.

With $t=(1-\delta(\bold n))s(\bold n)$, we have
\begin{equation}\label{[0,t]}
\begin{aligned}
&\int_0^t\exp\bigl(H(s,\xi(s)\bigr)\,ds=\int_0^t\exp\bigl(H(s,1\bigr)\,ds\\
&\quad=\int_0^ts^{-1}e^{-(n_2-n_1)s}(1-e^{-s})^{n_1}\,ds\\
&\quad\le\int_{z_2}^1z^{n_2-n_1-1}(1-z)^{n_1-1}\,dz\quad \Bigl(z_2=e^{-(1-\delta(\bold n))s(\bold n)}\Bigr)\\
&\quad\le_b \eps^*(\bold n)\int_0^1z^{n_2-n_1-1}(1-z)^{n_1-1}\,dz=\eps^*(\bold n)\cdot\frac{n_2/n_1}{(n_2-n_1)\binom{n_2}{n_1}};
\end{aligned}
\end{equation}
Here  
\begin{equation}\label{new,eps(n)=}
\eps^*(\bold n)=\left\{\begin{aligned}
&\eps_2(\bold n)=\exp\left(-0.99(n_2-n_1)e^{\delta(n)s(\bold n)}\right),&&\text{if }s(\bold n)\to\infty,\\
&\eps(\bold n)=\exp\bigl(-\theta(n_1^{1-2a})\bigr),&&\text{if }s(\bold n)=O(1),\end{aligned}\right.
\end{equation}
cf. \eqref{[z2,1]ratio} and \eqref{eps(n),2}. 

Further, for $s\ge t$,
\begin{equation}\label{H(s,xi(s))=full}
\begin{aligned}
H(s,\xi(s))&=t(n_1-1)-s(n_2-1)+n_1\log\bigl(1-e^{-t}\bigr)\\
&\quad-\log s -k\log\frac{t}{s}.
\end{aligned}
\end{equation}
So 
\begin{align*}
\int_t^{\infty}\exp\bigl(H(s,\xi(s)\bigr)\,ds&\le_b\exp\left(t(n_1-1)+n_1\log \bigl(1-e^{-t}\bigr)-k\log t\right)\\
&\times\int_t^{\infty}\exp\bigl(-s(n_2-1)\bigr)s^{k-1}\,ds.
\end{align*}
Here, using $k=t(n_2-1)$,
\begin{align*}
&\int_t^{\infty}\exp\bigl(-s(n_2-1)\bigr)s^{k-1}\,ds\le\int_0^{\infty}\exp\bigl(-s(n_2-1)\bigr)s^{k-1}\,ds\\
&=\frac{(k-1)!}{(n_2-1)^k}\le_b\left(\frac{k}{e(n_2-1)}\right)^{k-1}
\le\exp\bigl((k-1)\log t-t(n_2-1)\bigr),
\end{align*}
Therefore
\begin{equation}\label{t,oo}
\begin{aligned}
&\int_t^{\infty}\exp\bigl(H(s,\xi(s)\bigr)\,ds\le_be^{\psi(t)},\\
&\psi(s):=-s(n_2-n_1)+n_1\log(1-e^{-s}).
\end{aligned}
\end{equation}
Here
\begin{align*}
\psi(s(\bold n))&=-n_2\log n_2+n_1\log n_1+(n_2-n_1)\log(n_2-n_2)\\
&=-\log\left(\frac{n_2^{n_2}}{n_1^{n_1}(n_2-n_1)^{n_2-n_1}}\right)\le \log \binom{n_2}{n_1}^{-1},\\
\psi'(s(\bold n))&=\left. -(n_2-n_1)+\frac{n_1}{e^s-1}\right|_{s=s(\bold n)}=0,\\
\psi''(s)&=-\frac{e^s}{(e^s-1)^2};
\end{align*}
in particular, for $s\in [t,s(\bold n)]$,
\[
\psi ''(s)\le \psi''(s(\bold n))=-\frac{n_2(n_2-n_1)}{n_1}.
\]
Then 
\begin{align*}
\psi(t)&\le \psi(s(\bold n))+\frac{1}{2}\psi''(s(\bold n)) (t-s(\bold n))^2\\
&=\log\binom{n_2}{n_1}^{-1}\!-\,\frac{n_2(n_2-n_1)}{2n_1}\bigl(\delta(\bold n)s(\bold n)\bigr)^2.
\end{align*}
So
\begin{equation}\label{[t,n1]}
\begin{aligned}
&\int_t^{n_1}\exp\bigl(H(s,\xi(s)\bigr)\,ds\le \eps^{**}(\bold n)\binom{n_2}{n_1}^{-1},\\
&\eps^{**}(\bold n)=\exp\left(-\frac{n_2(n_2-n_1)}{2n_1}\bigl(\delta(\bold n)s(\bold n)\bigr)^2\right).
\end{aligned}
\end{equation}
Adding \eqref{[0,t]} and \eqref{[t,n1]}, we obtain 
\begin{equation*}
\int_0^{n_1}\exp\bigl(H(s,\xi(s)\bigr)\,ds\le_b
\bigl[\eps^*(\bold n)\frac{n_2}{n_1(n_2-n_1)}+\eps^{**}(\bold n)\bigr] \binom{n_2}{n_1}^{-1}.
\end{equation*}

(1) If $s(\bold n)\to\infty$,  then $n_2/n_1\to 1$, $\delta(\bold n)=s(\bold n)^{-b}$, $b<1$, and
\begin{align*}
\eps^*(\bold n)&=\exp\Big(-0.99(n_2-n_1)e^{s(\bold n)^{1-b}}\Bigr),\\
\eps^{**}(\bold n)&=\exp\left(-(0.5+o(1))(n_2-n_1)s(\bold n)^{2(1-b)}\right),
\end{align*}
so that  $\eps^*(\bold n)\frac{n_2}{n_1(n_2-n_1)}\ll \eps^{**}(\bold n)$. 
(2) If $s(\bold n)=O(1)$, then $\delta(\bold n)=n_1^{-a}$, $a<1/2$, and 
\[
\eps^*(\bold n)\frac{n_2}{n_1(n_2-n_1)}+\eps^{**}(\bold n)\le\exp\left(-\theta\bigl(n_1^{1-2a}\bigr)\right).
\]
We conclude that the bound \eqref{Pk^-simple} implies
\begin{equation}\label{Pk^-<simple}
\begin{aligned}
P_k^-(\bold n)&\le_b \frac{\hat{\eps}(\bold n)}{(n_1-1)!}\binom{n_2}{n_1}^{-1},\\
\hat{\eps}(\bold n)&:=\left\{\begin{aligned}
&\exp\left(-0.49(n_2-n_1)s(\bold n)^{2(1-b)}\right),&&\text{if }s(\bold n)\to\infty,\\
&\exp\left(-\theta\bigl(n_1^{1-2a}\bigr)\right),&&\text{if }s(\bold n)=O(1);\end{aligned}\right.
\end{aligned}
\end{equation}
here $a\in (0,1/2)$, $b\in (0,1)$. So, using \eqref{P(Q(M1)<)<}, {\it and\/}
\begin{equation}\label{key}
\log n_1\le s(\bold n)+\log(n_2-n_1), 
\end{equation}
we obtain
\begin{equation}\label{prelemma}
\begin{aligned}
&\pr\Bigl(\min_{\mathcal M}Q(\mathcal M)\le k\Bigr)\le P_k^-(\bold n)\binom{n_2}{n_1} n_1!=O\bigl(n_1\hat{\eps}(\bold n)\bigr)\\
&\quad\le_b\left\{\begin{aligned}
&\exp\left(-0.48(n_2-n_1)s(\bold n)^{2(1-b)}\right),&&\text{if }s(\bold n)\to\infty,\\
&\exp\left(-\theta\bigl(n_1^{1-2a}\bigr)\right),&&\text{if }s(\bold n)=O(1),\end{aligned}\right.
\end{aligned}
\end{equation}
provided that $2(1-b)>1$, i.e. $b<1/2$.\\

We have proved
\begin{Lemma}\label{P(Q(M)<)small} Let $b<1/2$ in the definition \eqref{delta(n)=} of $\delta(\bold n)$. Then
\[
\pr\Bigl(\min_{\mathcal M}Q(\mathcal M)\le (1-\delta(\bold n))n_2s(\bold n)\Bigr)
\le\!\left\{\begin{aligned}
&\!\exp\Bigl(-\theta\bigl(s(\bold n)^{2(1-b)}\bigr)\Bigr),&&\text{if } s(\bold n)\to\infty,\\
&\!\exp\Bigl(-\theta\bigl(n_1^{1-2a}\bigr)\Bigr),&&\text{if } s(\bold n)=O(1).
\end{aligned}\right.
\]
\end{Lemma}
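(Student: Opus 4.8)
The estimate has, in essence, already been obtained in \eqref{prelemma}, so the plan is to trace how the pieces of this section assemble into it, and then to close the one cosmetic gap between \eqref{prelemma} and the statement. The first move is the union bound: summing the probability ``$\mathcal M$ stable and $Q(\mathcal M)\le k$'' over the $\binom{n_2}{n_1}n_1!$ injections and using symmetry yields $\pr\bigl(\min_{\mathcal M}Q(\mathcal M)\le k\bigr)\le\binom{n_2}{n_1}n_1!\,P_k^-(\bold n)$, $P_k^-(\bold n)=\sum_{\kappa\le k}P_\kappa(\bold n)$, as in \eqref{P(Q(M1)<)<}, so it suffices to bound $P_k^-(\bold n)$ for $k$ a shade below $n_2s(\bold n)$. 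For that I would feed the integral formula for $P_\kappa(\bold n)$ from Lemma \ref{extbasic} into Chernoff's method, bounding the coefficient extraction by $\inf_{\xi\in(0,1]}\Phi(\xi,\bold x,\bold y)$ of \eqref{sum,kappa<Phi}--\eqref{Phi=}, then use $1-u\le e^{-u}$ to integrate out $\bold y$, collapse $\prod_j\tfrac{1-e^{-\xi s_j}}{\xi s_j}$ into a function of $s=\sum_i x_i$ alone via \eqref{log,easy}, and finally invoke \eqref{fn(s)<} of Lemma \ref{densities} to arrive at $P_k^-(\bold n)\le_b\frac{1}{(n_1-1)!}\int_0^{n_1}\inf_{\xi\in(0,1]}e^{H(s,\xi)}\,ds$ with $H$ as in \eqref{H(s,xi)}, which is \eqref{Pk^-simple}.

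The decisive step is the choice of the Chernoff parameter $\xi=\xi(s)$. The exact stationary point of $\xi\mapsto H(s,\xi)$ is the root $t$ of $h(t)=k$ in \eqref{h(x)=}, which is only implicit; I would replace it by the explicit near-root $t:=(1-\delta^\sharp(\bold n))s(\bold n)$, with $\delta^\sharp(\bold n)$ chosen so that $k=t(n_2-1)$ is exactly the target threshold, and take $\xi(s)=t/s$ for $s\ge t$, $\xi(s)\equiv1$ for $s<t$, splitting $\int_0^{n_1}=\int_0^t+\int_t^{n_1}$. On $[0,t]$ the integrand is $s^{-1}e^{-(n_2-n_1)s}(1-e^{-s})^{n_1}$, a Beta-type tail bounded by $\int_{e^{-t}}^1 z^{n_2-n_1-1}(1-z)^{n_1-1}\,dz$, exponentially small relative to the full Beta integral and contributing the factor $\eps^*(\bold n)$ of \eqref{new,eps(n)=}. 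On $[t,n_1]$, using \eqref{H(s,xi(s))=full} one reads off $\int_t^\infty e^{H(s,\xi(s))}\,ds\le_b e^{\psi(t)}$ with $\psi$ as in \eqref{t,oo}, and a Taylor expansion of $\psi$ about its maximiser $s(\bold n)$ (using $\psi'(s(\bold n))=0$ and the uniform bound on $\psi''$) produces the Gaussian-decay factor $\eps^{**}(\bold n)=\exp\bigl(-\tfrac{n_2(n_2-n_1)}{2n_1}(\delta^\sharp(\bold n)s(\bold n))^2\bigr)$, cf. \eqref{[t,n1]}. Multiplying the resulting bound on $P_k^-(\bold n)$ by $\binom{n_2}{n_1}n_1!$ cancels the binomial and leaves an $n_1$ prefactor, which the exponential decay absorbs precisely because \eqref{key} gives $\log n_1\le s(\bold n)+\log(n_2-n_1)$ and because $2(1-b)>1$ when $b<1/2$; this is \eqref{prelemma}.

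To pass from \eqref{prelemma}, stated for the threshold $(1-\delta(\bold n))(n_2-1)s(\bold n)$, to the threshold $(1-\delta(\bold n))n_2s(\bold n)$ in the Lemma, note that $\delta(\bold n)\gg n_2^{-1}$ in both regimes ($s(\bold n)^{-b}\ge(\log n_1)^{-b}\gg n_1^{-1}$ when $s(\bold n)\to\infty$; $n_1^{-a}\gg n_1^{-1}$ when $s(\bold n)=O(1)$), so the choice $k=t(n_2-1)=(1-\delta(\bold n))n_2s(\bold n)$ is legitimate and forces $\delta^\sharp(\bold n)=\delta(\bold n)(1+o(1))$, which leaves the $\theta(\cdot)$-exponents in \eqref{new,eps(n)=} and \eqref{[t,n1]} unchanged; and since $n_2-n_1\ge1$, the exponent $-\theta\bigl((n_2-n_1)s(\bold n)^{2(1-b)}\bigr)$ is at most $-\theta\bigl(s(\bold n)^{2(1-b)}\bigr)$, which is the stated form. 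I expect the genuine obstacle to be exactly the control of the two halves of the split integral once the implicit saddle has been swapped for the explicit $(1-\delta^\sharp(\bold n))s(\bold n)$: one must confirm that the $[0,t]$-tail is small against the Beta normalisation \emph{and} that the Laplace bound on the $[t,\infty)$-part loses only a Gaussian factor in $\delta^\sharp(\bold n)s(\bold n)$, so that --- after the $n_1$ prefactor is paid for through \eqref{key} --- a true exponential decay remains; keeping the losses in $1-u\le e^{-u}$, in \eqref{log,easy}, and in \eqref{fn(s)<} simultaneously negligible is the real bookkeeping, the rest being routine Laplace asymptotics.
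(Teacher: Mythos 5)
Your proposal is correct and follows essentially the same route as the paper: union bound over injections, Chernoff via the integral formula for $P_\kappa(\bold n)$, reduction to a univariate integral through \eqref{log,easy} and \eqref{fn(s)<}, the explicit near-saddle $\xi(s)=t/s$ with $t$ close to $(1-\delta(\bold n))s(\bold n)$, the Beta-tail bound on $[0,t]$ and the concavity/Laplace bound via $\psi$ on $[t,n_1]$, and finally \eqref{key} with $b<1/2$ to absorb the $\binom{n_2}{n_1}n_1!$ factor. Your explicit handling of the $(n_2-1)$ versus $n_2$ threshold (choosing $\delta^\sharp=\delta(1+o(1))$, justified by $\delta(\bold n)\gg n_2^{-1}$) cleanly closes a cosmetic step the paper passes over silently.
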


We are about to prove that, on the other hand,  w.h.p. 
$\max_{\mathcal M}Q(\mathcal M)\le k:=(1+\delta(\bold n))(n_2-1)s(\bold n)$. 

The argument runs parallel to the above proof of Lemma \ref{P(Q(M)<)small}. Analogously to 
\eqref{P(Q(M1)<)<} and \eqref{Pk^-simple}-\eqref{H(s,xi)}, we have
\begin{equation}\label{P(Q(M1)>)<}
\pr\Bigl(\max_{\mathcal M}Q(\mathcal M)\ge k\Bigr)\le \binom{n_2}{n_1}n_1! P^+_{k}(\bold n),\quad P^+_{k}(\bold n):=\sum_{\kappa\ge k}P_{\kappa}(\bold n),
\end{equation}
and
\begin{align}
P^+_{k}(\bold n)&\le_b\frac{1}{(n_1-1)!}\int_0^{n_1}\inf_{\xi\in [1,\infty)}\exp\bigl(H(s,\xi)\bigr)\,ds,
\label{Pksimple}
\end{align}
with $H(s,\xi)$ defined in \eqref{H(s,xi)}. So now the Chernoff parameter $\xi$ exceeds $1$. Introduce
$t=(1+\delta(\bold n))s(\bold n)$, and  set $\xi(s)=t/s$ for $s\le t$ and $\xi(s)=1$ for $s>t$. 

Let us bound $\int_0^{n_1}\exp(H(s,\xi(s))\,ds$. First, using \eqref{H(s,xi)},  we have 
\begin{align*}
&\int_t^{n_1}\!\!\exp\bigl(H(s,\xi(s)\bigr)\,ds=\int_t^{n_1}\!\!\exp\bigl(H(s,1)\bigr)\,ds\\
&\le \int_t^{n_1}\!\!e^{-s(n_2-n_1)}(1-e^{-s})^{n_1-1}\,ds.
\end{align*}
The same steps as in \eqref{intK(z)=}-\eqref{znotinz1,z2} deliver
\begin{equation}\label{int(t,n1)<}
\begin{aligned}
&\quad\int_t^{n_1}\!\!\exp\bigl(H(s,\xi(s)\bigr)\,ds\le\eps(\bold n)\frac{n_2}{n_1(n_2-n_1)}\,\binom{n_2}{n_1}^{-1},\\
\eps(\bold n)&=\left\{\begin{aligned}
&\exp\bigl(-0.99(n_2-n_1) s(\bold n)^{1-b}\bigr),&&\text{if }s(\bold n)\to\infty,\\
&\exp\bigl(-\theta(n_1^{1-2a})\bigr),&&\text{if }s(\bold n)=O(1).\end{aligned}\right.
\end{aligned}
\end{equation}
For $s(\bold n)=O(1)$ the factor $\eps(\bold n)$ is well suited for our needs. However, in the case $s(\bold n)\to \infty$ the power of $s(\bold n)$ is too low. Fortunately, modifying and extending the argument for Lemma \ref{P(Q(M)<)small},
we can double the power of $s(\bold n)$.

Let us write
\begin{align*}
&e^{-s(n_2-n_1)}(1-e^{-s})^{n_1-1}=e^{\psi_1(s)},\\
&\psi_1(s):= -s(n_2-n_1)+(n_1-1)\log\bigl(1-e^{-s}\bigr). 
\end{align*}
We have
\begin{align*}
\psi_1^{(1)}(s)&=-(n_2-n_1)+(n_1-1)\frac{1}{e^s-1},\\
\psi_1^{(2)}(s)&=-(n_1-1)\frac{e^s}{(e^s-1)^2},\\
\psi_1^{(3)}(s)&=\frac{2}{(e^s-1)^3}+\frac{3}{(e^s-1)^2}\le \frac{3e^s}{(e^s-1)^3},
\end{align*}
and $\psi_1^{(4)}(s)<0$. So $\psi_1(s)$ is concave, and attains its maximum at $s_1(\bold n)=\log\tfrac{n_2-1}{n_2-n_1}$. Clearly $\psi_1^{(1)}(s_1(\bold n))=0$, $s_1(\bold n)<s(\bold n)<t$, and
\begin{align}
\psi_1(s_1(\bold n))&=-\log\frac{(n_2-1)^{n_2-1}}{(n_1-1)^{n_1-1}(n_2-n_1)^{n_2-n_1}}\le -\log\binom{n_2-1}{n_1-1},\label{psi(s1(n)=}\\
\psi_1^{(2)}(s_1(\bold n))&\sim - \frac{n_2(n_2-n_1)}{n_1},\notag\\
\psi_1^{(3)}(s)&\le \psi^{(3)}(s_1(\bold n))\lesssim \frac{3(n_2-n_1)^2}{n_1^2}\sim 3s^{-2}(\bold n).\notag
\end{align}
Since $t-s_1(\bold n)\sim t-s(\bold n)=\delta(\bold n)s(\bold n)$, and $\tfrac{n_2(n_2-n_1)}{n_1}\sim (n_2-n_1)$, we have
\begin{equation}\label{psi(t)<}
\begin{aligned}
\psi_1(t)&\le \psi_1(s_1(\bold n))-\frac{n_2-n_1}{2}(\delta(\bold n)s(\bold n))^2(1+o(1))+O\bigl(\delta^3(\bold n)s(\bold n)\bigr)\\
&=\psi_1(s_1(\bold n))-\frac{n_2-n_1}{2}(\delta(\bold n)s(\bold n))^2(1+o(1)),
\end{aligned}
\end{equation}
and likewise
\[
\psi_1'(t)=-(n_2-n_1)\delta(\bold n)s(\bold n)(1+o(1))\to -\infty.
\]
By concavity of $\psi_1(s)$, we also have $\psi_1(s)\le \psi_1(t)+\psi_1'(t)(t-s)$. Therefore, as $\delta(\bold n)
=s(\bold n)^{-b}$ and $\tfrac{n_2}{n_2-1}=e^{s(\bold n)}$,
\begin{equation}\label{upgrade}
\begin{aligned}
&\int_t^{n_1}\!\!\exp\bigl(H(s,\xi(s)\bigr)\,ds\le e^{\psi_1(t)}\int_t^{\infty}\exp(\psi_1'(t)(s-t))\,ds
=\frac{e^{\psi_1(t)}}{-\psi_1'(t)}\\
&\le_b \frac{n_2}{n_1(n_2-n_1)}\binom{n_2}{n_1}^{-1}
\exp\left(\!-\frac{n_2-n_1}{2}s(\bold n)^{2(1-b)}(1+o(1))\!\right)\\
&\le_b\binom{n_2}{n_1}^{-1}\exp\left(\!-0.49(n_2-n_1)s(\bold n)^{2(1-b)}\!\right),\\
\end{aligned}
\end{equation}
if $b<1/2$. Thus the bound in the first equation from \eqref{int(t,n1)<} continues to hold 
for the much smaller
\begin{equation}\label{better,eps(n)}
\eps(\bold n):=\exp\bigl(-0.49(n_2-n_1)s^{2(1-b)}(\bold n)\bigr).
\end{equation}

Turn to $s\in [0,t]$. Let us continue with the case $s(\bold n)\to\infty$. Analogously to \eqref{t,oo},  we have 
\begin{equation}\label{int[0,t]<e(psi)}
\int_0^{t}\exp\bigl(H(s,\xi(s)\bigr)\,ds\le_b e^{\psi(t)}.
\end{equation}
We know that $\psi(s)$ attains its maximum at $s(\bold n)$, and $\psi(s(\bold n))\le -\log\binom{n_2}{n_1}$.
Since $t-s(\bold n)=\delta(\bold n)s(\bold n)$, just like \eqref{psi(t)<} we have
\begin{equation}\label{psi(t)<psi(sn)}
\psi(t)\le \psi(s(\bold n))-\frac{n_2-n_1}{2}(\delta(\bold n)s(\bold n))^2(1+o(1)).
\end{equation}
So
\begin{align}
\int_0^{t}\exp\bigl(H(s,\xi(s)\bigr)\,ds&\le _b \binom{n_2}{n_1}^{-1}
\exp\left(-0.49(n_2-n_1)s(\bold n)^{2(1-b)}\right).\label{sameup}
\end{align}
By \eqref{upgrade} and \eqref{sameup},
\begin{equation}\label{total,slarge}
\int_0^{n_1}\exp\bigl(H(s,\xi(s)\bigr)\,ds\le_b\binom{n_2}{n_1}^{-1}
\exp\left(-0.49(n_2-n_1)s(\bold n)^{2(1-b)}\right).
\end{equation}

For the case $s(\bold n)=O(1)$ we have $\delta(\bold n)=n_1^{-a}$, $a<1/2$. The bound \eqref{int[0,t]<e(psi)} still holds. Furthermore, using  
\[
\psi''(s(\bold n))=-\frac{n_2(n_2-n_1)}{n_1}=\theta(n_2^2n_1^{-1}),\quad s(\bold n)=\log\frac{n_2}{n_2-n_1}\ge
\frac{n_1}{n_2},
\]
we have
\begin{align*}
\psi(t)&\le \psi(s(\bold n))-\frac{\psi''(s(\bold n))}{2}(\delta(\bold n)s(\bold n))^2(1+o(1))\\
&\le \psi(s(\bold n))-\theta\bigl(n_1^{1-2a}\bigr).
\end{align*}
So
\[
\int_0^{t}\exp\bigl(H(s,\xi(s)\bigr)\,ds\le \binom{n_2}{n_1}^{-1}\exp\Bigl(-\theta\bigl(n_1^{1-2a}\bigr)\Bigr).
\]
Combining this bound with \eqref{int(t,n1)<} ($s(\bold n)=O(1)$ case), we obtain
\begin{equation}\label{total,ssmall}
\int_0^{n_1}\exp\bigl(H(s,\xi(s)\bigr)\,ds\le_b\binom{n_2}{n_1}^{-1}\exp\Bigl(-\theta\bigl(n_1^{1-2a}\bigr)\Bigr).
\end{equation}
With the bounds \eqref{total,slarge} and \eqref{total,ssmall} at hand, we argue exactly like in \eqref{Pk^-<simple}, \eqref{key} and \eqref{prelemma} and establish
\begin{Lemma}\label{P(Q(M)>)large} In notations of Lemma \ref{P(Q(M)<)small},
\[
\pr\Bigl(\max_{\mathcal M}Q(\mathcal M)\ge (1+\delta(\bold n))n_2s(\bold n)\Bigr)
\le\!\left\{\begin{aligned}
&\!\exp\Bigl(-\theta\bigl(s(\bold n)^{2(1-b)}\bigr)\Bigr),&&\text{if } s(\bold n)\to\infty,\\
&\!\exp\Bigl(-\theta\bigl(n_1^{1-2a}\bigr)\Bigr),&&\text{if } s(\bold n)=O(1).
\end{aligned}\right.
\]
\end{Lemma}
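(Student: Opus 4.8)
The plan is to run, with all inequalities reversed, the Chernoff argument that gave Lemma~\ref{P(Q(M)<)small}. First I would pass from the extremal stable matching to all injections by the union bound: with $k:=(1+\delta(\bold n))(n_2-1)s(\bold n)$,
\[
\pr\Bigl(\max_{\mathcal M}Q(\mathcal M)\ge k\Bigr)\le\binom{n_2}{n_1}n_1!\,P^+_k(\bold n),\qquad P^+_k(\bold n):=\sum_{\kappa\ge k}P_\kappa(\bold n),
\]
and feed into $P^+_k(\bold n)$ the generating-function integral for $P_\kappa(\bold n)$ from Lemma~\ref{extbasic}. Since the $\xi$-coefficients of $\prod_{i\ne j}\bigl(1-x_i(1-\xi+\xi y_j)\bigr)\prod_h\bar x_h^{n_2-n_1}=\prod_{i\ne j}(\bar x_i+x_i\bar y_j\xi)\prod_h\bar x_h^{n_2-n_1}$ are nonnegative, for every $\xi\ge1$ one has $\sum_{\kappa\ge k}[\xi^{\kappa-n_1}](\cdots)\le\xi^{n_1-k}(\cdots)$, so $P^+_k(\bold n)$ is bounded by the integral of $\xi^{n_1-k}\prod_{i\ne j}(\bar x_i+x_i\bar y_j\xi)\prod_h\bar x_h^{n_2-n_1}$ over the cube, for any such $\xi$. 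Using $1+u\le e^u$ on each factor, integrating out $\bold y$, and applying \eqref{log,easy} to collapse $\prod_j\frac{1-e^{-\xi s_j}}{\xi s_j}$ into $e^2\bigl(\frac{1-e^{-\xi s}}{\xi s}\bigr)^{n_1}$, the integrand depends on $\bold x$ only through $s=\sum_i x_i$; then \eqref{fn(s)<} reduces everything to $P^+_k(\bold n)\le_b\frac1{(n_1-1)!}\int_0^{n_1}\inf_{\xi\ge1}e^{H(s,\xi)}\,ds$ with $H(s,\xi)$ as in \eqref{H(s,xi)}, i.e.\ to \eqref{Pksimple}.

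The crux is choosing the Chernoff parameter as a function of $s$. Stationarity of $H(s,\cdot)$ amounts to $h(\xi s)=k$ with $h$ from \eqref{h(x)=}, and since $h(s(\bold n))=(n_2-1)s(\bold n)=k/(1+\delta(\bold n))$ the natural explicit surrogate for the implicit root is $t:=(1+\delta(\bold n))s(\bold n)$: I would set $\xi(s)=t/s$ for $s\le t$ (so $\xi\ge1$, as required) and $\xi(s)=1$ for $s>t$, then bound $\int_0^{t}$ and $\int_t^{n_1}$ separately. On $[0,t]$, as for \eqref{t,oo}, $\int_0^t e^{H(s,\xi(s))}\,ds\le_b e^{\psi(t)}$ with $\psi(s)=-s(n_2-n_1)+n_1\log(1-e^{-s})$, which is concave with maximum $\psi(s(\bold n))\le-\log\binom{n_2}{n_1}$; a second-order Taylor expansion at $s(\bold n)$, using $\psi''(s(\bold n))=-\tfrac{n_2(n_2-n_1)}{n_1}$, $t-s(\bold n)=\delta(\bold n)s(\bold n)$ and $s(\bold n)\ge n_1/n_2$, yields $\psi(t)\le\psi(s(\bold n))-\tfrac{n_2-n_1}{2}(\delta(\bold n)s(\bold n))^2(1+o(1))$ when $s(\bold n)\to\infty$ and $\psi(t)\le\psi(s(\bold n))-\theta(n_1^{1-2a})$ when $s(\bold n)=O(1)$. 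On $[t,n_1]$, $\xi\equiv1$, so $e^{H(s,1)}=s^{-1}e^{\psi_1(s)}$ with $\psi_1(s)=-s(n_2-n_1)+(n_1-1)\log(1-e^{-s})$, again concave, with maximum at $s_1(\bold n)=\log\tfrac{n_2-1}{n_2-n_1}<t$; a Taylor expansion at $s_1(\bold n)$ (bounding $\psi_1^{(3)}$ by $O(s(\bold n)^{-2})$) together with the linear tail bound $\psi_1(s)\le\psi_1(t)+\psi_1'(t)(s-t)$, $\psi_1'(t)\to-\infty$, gives $\int_t^{n_1}e^{H(s,\xi(s))}\,ds\le_b\binom{n_2}{n_1}^{-1}\exp\bigl(-0.49(n_2-n_1)s(\bold n)^{2(1-b)}\bigr)$ when $s(\bold n)\to\infty$, while the crude \eqref{int(t,n1)<} suffices when $s(\bold n)=O(1)$.

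Adding the two pieces, with $\delta(\bold n)=s(\bold n)^{-b}$ (resp.\ $n_1^{-a}$), yields $P^+_k(\bold n)\le_b\binom{n_2}{n_1}^{-1}\hat\eps(\bold n)/(n_1-1)!$ with $\hat\eps(\bold n)=\exp\bigl(-0.49(n_2-n_1)s(\bold n)^{2(1-b)}\bigr)$ for $s(\bold n)\to\infty$ and $\hat\eps(\bold n)=\exp\bigl(-\theta(n_1^{1-2a})\bigr)$ for $s(\bold n)=O(1)$, i.e.\ \eqref{total,slarge} and \eqref{total,ssmall}. Multiplying by $\binom{n_2}{n_1}n_1!$ and using \eqref{key}, $\log n_1\le s(\bold n)+\log(n_2-n_1)$, to absorb the leftover factor $n_1$ into the exponential (which works because $b<1/2$ forces $2(1-b)>1$) bounds $\pr(\max_{\mathcal M}Q(\mathcal M)\ge k)$ by $\exp\bigl(-\theta(s(\bold n)^{2(1-b)})\bigr)$ (resp.\ $\exp\bigl(-\theta(n_1^{1-2a})\bigr)$); since $k<(1+\delta(\bold n))n_2s(\bold n)$, the event in the lemma is contained in $\{\max_{\mathcal M}Q(\mathcal M)\ge k\}$, and the claim follows. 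I expect the main difficulty to be the regime $s(\bold n)\to\infty$: the naive estimate only gives an exponent of order $s(\bold n)^{1-b}$, and extracting the \emph{doubled} exponent $s(\bold n)^{2(1-b)}$ requires using the quadratic (not merely linear) behaviour of $\psi$ and $\psi_1$ near their maxima, plus checking that $\xi(s)=t/s$ stays admissible and is a close enough surrogate for the true minimizer of $e^{H(s,\xi)}$ over $\xi\ge1$.
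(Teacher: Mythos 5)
Your proposal is correct and follows essentially the same route as the paper: the union bound over injections, the Chernoff bound on $P^+_k(\bold n)$ via the generating-function integral of Lemma \ref{extbasic}, the choice $\xi(s)=t/s$ ($t=(1+\delta(\bold n))s(\bold n)$) for $s\le t$ and $\xi(s)=1$ for $s>t$, and the quadratic (concavity/Taylor) analysis of $\psi$ and $\psi_1$ near their maxima to upgrade the exponent from $s(\bold n)^{1-b}$ to $s(\bold n)^{2(1-b)}$, exactly as in \eqref{upgrade}--\eqref{total,ssmall} and \eqref{key}. The only nitpick is that $e^{H(s,1)}=s^{-1}(1-e^{-s})e^{\psi_1(s)}\le s^{-1}e^{\psi_1(s)}$ is an inequality rather than the equality you wrote, which is harmless since you use it in the correct direction.
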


\section{Proof of Theorem \ref{R(M)appr}}

We need to show that, for $n_2\le n_1^{3/2-d}$, ($d<1/2$), w.h.p. for all
stable $\mathcal M$'s the husbands' rank $R(\mathcal M)$ is asymptotic to 
\begin{equation}\label{k(bold n)=}
k=k(\bold n):=n_1^2\,f(s(\bold n)),\quad f(x):=\frac{e^{x}-1-x}{x(e^{x}-1)}.
\end{equation}

Similarly to \eqref{P(Q(M1)<)<},  for $k\ge n_1$ we have
\begin{align}
\pr\Bigl(\min_{\mathcal M}R(\mathcal M)\le k\Bigr)&\le \binom{n_2}{n_1}\,n_1!\sum_{\ell\le k}P_{\ell}(\bold n),
\label{P(minR<)}\\
\pr\Bigl(\max_{\mathcal M}R(\mathcal M)\ge k\Bigr)&\le \binom{n_2}{n_1}\,n_1!\sum_{\ell\ge k}P_{\ell}(\bold n),
\label{P(maxR>)}
\end{align}
where $P_{\ell}(\bold n)$ is the probability that a generic injection from $[n_1]$ to $[n_2]$ is stable, and the
husbands' rank is $\ell$. Denote the first sum and the second sum $\mathcal P^-(k)$ and $\mathcal P^+(k)$ respectively. Let us bound these probabilities for some $k=k^-(\bold n)$ and $k=k^+(\bold n)$
respectively, such that $k(\bold n)\in [k^-(\bold n),k^+(\bold n)]$.

By the formula for $P_{\ell}(\bold n)$ in Lemma \ref{extbasic}, we
bound the sums:
\begin{align}
\mathcal P^-(k)&\le\!\overbrace {\idotsint}^{2n_1}_{\bold x,\,\bold y\in [0,1]^{n_1}}
\inf\!\Big\{\!\Psi(\eta,\bold x,\bold y): \eta\le 1\!\Bigr\}\,d\bold x d\bold y,\label{Pn-(k)<Psi}\\
\mathcal P^+(k)&\le\!\overbrace {\idotsint}^{2n_1}_{\bold x,\,\bold y\in [0,1]^{n_1}}
\inf\!\Big\{\!\Psi(\eta,\bold x,\bold y): \eta\ge 1\!\Bigr\}\,d\bold x d\bold y,\label{Pn+(k)<Psi}\\
\Psi(\eta,\bold x,\bold y)&:=\eta^{n_1-k}\!\!\!\!\prod_{1\le i\neq j\le n_1}\!\!\!
\bigl(\bar x_i\bar y_j +x_i\bar y_j+\bar x_i y_j\eta\bigr)\cdot\prod_{h=1}^{n_1}\bar x_h^{n_2-n_1}.\label{Phi=}
\end{align}

{\bf (a)\/} Our first step is to dispense with the peripheral parts of the cube $[0,1]^{2n_1}$ whose contribution
to the integrals in \eqref{Pn-(k)<Psi} and \eqref{Pn+(k)<Psi} can be safely ignored. Fix 
$\gamma\in \bigl(\tfrac{1}{2},1)$, $\rho\in (0,1)$, and define
\begin{align*}
C_1&=\Bigl\{(\bold x,\bold y)\in [0,1]^{2n_1}: \sum_ix_iy_i\ge n_1^{\gamma}\Bigr\},\\
C_2&=\Bigl\{(\bold x,\bold y)\in [0,1]^{2n_1}: \sum_ix_i\ge n_1- n_1^{\rho}\Bigr\},\\
C_0&=[0,1]^{2n_1}\setminus(C_1\cup C_2).
\end{align*}
Denote by $I^{\pm}(C_{\alpha})$ the contributions of $C_{\alpha}$ to the value of the integrals
in \eqref{Pn-(k)<Psi} and \eqref{Pn+(k)<Psi}, $(\alpha=0,1,2)$. For $\alpha=1,2$, choosing $\xi=\eta=1$ we have
\[
I^{\pm}(C_{\alpha})\le \overbrace {\idotsint}^{2n_1}_{\bold x,\,\bold y\in C_{\alpha}}\prod_{1\le i\neq j\le n_1}(1-x_iy_j)\,d\bold x d\bold y.
\]
Here 
\begin{equation}\label{<n - sumsum}
\begin{aligned}
\prod_{h=1}^{n_1} \bar x_h^{n_2-n_1}&\le \exp\bigl(-(n_2-n_1)\sum_h x_h\bigr),\\
\log\prod_{1\le i\neq j\le n_1}\!\!\!\!(1-x_iy_j)&\le -\sum_{1\le i\neq j\le n_1}\!\!\!x_iy_j\le n_1-
\Bigl(\sum_ix_i\Bigr)\Bigl(\sum_j y_j\Bigr),
\end{aligned}
\end{equation}
and for $C_1$,  $s(\bold x):=\sum_i x_i\ge n_1^{\gamma}$, while by the Cauchy-Schwartz inequality and $x_i, y_j\le 1$,
\[
\Bigl(\sum_ix_i\Bigr)\Bigl(\sum_j y_j\Bigr)\ge \Bigl(\sum_i x_i^{1/2}y_i^{1/2}\Bigr)^2\ge
\Bigl(\sum_i x_iy_i\Bigr)^2\ge n_1^{2\gamma}.
\]
So 
\[
I^{\pm}(C_1)\le \exp\bigl(n_1-n_1^{2\gamma}-n_1^{\gamma}(n_2-n_1)\bigr),
\]
implying that the total contribution of $C_1$ to 
$\binom{n_2}{n_1}n_1!\,\mathcal P^{\pm}(k)$ is $\exp\bigl(-0.5 n_1^{2\gamma}\bigr)$, i.e. $o\bigl(e^{-cn_1}\bigr)$ for every $c>0$ as $n_2>n_1\to\infty$, since $2\gamma>1$. Using the definition of $C_2$ and the top inequality in \eqref{<n - sumsum}, and integrating innermost over $\bold y$, we have: with $s^*=n_1-n_1^{\rho}$,
\begin{align*}
I^{\pm}(C_2)&\le \exp\bigl(-0.5 n_1(n_2-n_1)\bigr)\!\!\overbrace {\idotsint}^{n_1}_{\bold x\in [0,1]^{n_1}:s(\bold x)\ge s^*}\!\!
\left(\int_0^1e^{-s(\bold x)}\,dy\!\!\right)^{n_1} d\bold x\\
&=\exp\bigl(-0.5 n_1(n_2-n_1)\bigr)\int_{s^*}^{n_1}\left(\frac{1-e^{-s}}{s}\right)^{n_1}\!\! f_{n_1}(s)\,ds,
\end{align*}
where $f_{n_1}(s)$ is the density of $\sum_iX_i$, the sum of the $n_1$ independent, $[0,1]$-uniform random variables. By Lemma \ref{densities}, 
\[
f_{n_1}(s)=\frac{s^{n_1-1}}{(n_1-1)!}\!\pr(L_{n_1}^+\le s^{-1}),
\]
where $L_{n_1}^+$ is the length of the longest subinterval in the partition of $[0,1]$ by $n_1-1$
independent, uniform points.  It was proved in \cite{Pit2} that 
\[
\pr\bigl(L_{n_1}^+\le\zeta\bigr)\le (n_1\zeta-1)^{n_1-1},\quad \forall\,\zeta>n_1^{-1}.
\]
Therefore
\begin{align*}
I^{\pm}(C_2)&\le\frac{\exp\bigl(-0.5 n_1(n_2-n_1)\bigr)}{(n_1-1)!}\left(\frac{n_1^{\rho}}{n_1-n_1^{\rho}}\right)^{n_1-1}\int_{s^*}^{n_1}s^{-1}\,ds\\
&=O\left(\frac{\exp\bigl(-0.5 n_1(n_2-n_1)-0.5(1-\rho)n_1\log n_1\bigr)}{(n_1-1)!}\right),
\end{align*}
implying that the total contribution of $C_2$ to $\binom{n_2}{n_1}n_1!\,
P^{\pm}(k)$ is \linebreak $o\bigl(e^{-cn_1}\bigr)$ for every $c>0$ as $n_2>n_1\to\infty$, since $\rho<1$.

{\bf (b)\/} Turn to the contribution of $C_0=[0,1]^{2n_1}\setminus \bigl(C_1\cup C_2\bigr)$. 
Let $(\bold x,\bold y)\in C_0$. Similarly to \eqref{<n - sumsum}, we obtain
\begin{equation}\label{logprod=}
\begin{aligned}
&\log\left[\prod_{ i\neq j}(\bar x_i\bar y_j+x_i\bar y_j+\bar x_iy_j\eta)\cdot\prod_h\bar x_h^{n_2-n_1}\right]\\
&\le\! \sum_{i\neq j}y_j\bigl((\eta-1)-x_i\eta\bigr)-(n_2-n_1)s\\
&=-s(\eta)\sum_jy_j-(n_2-n_1)s+\eta\sum_ix_iy_i,
\end{aligned}
\end{equation}
where $s=\sum_ix_i$, and
\begin{equation}\label{s(xi,eta)=}
s(\eta)=s-(n_1-1-s)(\eta-1)=n_1-1-\eta(n_1-1-s).
\end{equation}
By the definition of $C_0$, the last term in \eqref{logprod=} is $O\bigl(\eta\min(n_1^{\gamma},s)\bigr)$. So $\Psi(\cdot)$ in\eqref{Phi=} is bounded via
\begin{equation}\label{Phi<}
\Psi(\eta,\bold x,\bold y)\le \eta^{n_1-k}\exp\Bigl[-s(\eta)\sum_jy_j
-(n_2-n_1)s+O\bigl(\eta\min(n_1^{\gamma},s)\bigr)\Bigr].
\end{equation}
To upper-bound $I^{\pm}(C_0)$, we will choose $\eta=\eta^{\pm}$ dependent on
$s$ only. Of course, the admissible $\eta^{\pm}(s)$ need to satisfy the conditions
$\eta^+(s)\ge 1$ and $\eta^{-}(s)\le 1$, respectively. Whatever our choice will
be, integrating innermost with respect to $y$ and using the bound $f_{n_1}(s)\le \tfrac{s^{n_1-1}}{(n_1-1)!}$, we obtain 
\begin{equation}\label{Ipm<Jpm}
\begin{aligned}
I^{\pm}(C_0)&\le \frac{J^{\pm}(C_0)}{(n_1-1)!},\\
J^{\pm}(C_0)&=\int\limits_0^{s^*}\!\exp\Bigl(\!\mathcal H_k(s,\eta^{\pm})
+O\bigl(\eta^{\pm}\min(n_1^{\gamma},s)\bigr)\!\Bigr) ds,\\
\mathcal H_k(s,\eta)&:=(n_1-k)\log\eta+n_1\log\frac{1-\exp\bigl(-s(\eta)\bigr)}{s(\eta)}\\
&+(n_1-1)\log s-(n_2-n_1)s;
\end{aligned}
\end{equation}
here $s^*=n_1-n_1^{\rho}$, and $s(\eta)$ was defined in \eqref{s(xi,eta)=}. \\

An ideal $\eta^{\pm}(s)$ is an {\it admissible\/} $\eta(s,k^{\pm})$ that maximizes $\mathcal H_k(s,\eta)$,
($k=k^{\pm}(\bold n)$). (We hasten to add that the parameters $k^{\pm}(\bold n)$, that sandwich $k(\bold n)$, will be defined shortly.)

As in the proof of Theorem \ref{Q(M)appr}, we are content to choose $\eta^{\pm}(s)$ asymptotically close 
to a stationary point $\eta(s,k)$ of $H_k(s,\eta)$,  considered as a function of $\eta$, provided that
this point is admissible, of course. Now a stationary point is a  root of
\begin{align*}
\frac{\partial\mathcal H_k(s,\eta)}{\partial\eta}&=\frac{n_1-k}{\eta}+n_1f(s(\eta))(n_1-1-s)=0,\\
f(x)&:=\frac{e^x-1-x}{x(e^x-1)};
\end{align*}
or, since $\eta(n_1-1-s)=n_1-1-s(\eta)$,
\begin{equation}\label{s(eta)=root}
k=n_1 + n_1(n_1-1-s(\eta)) f(s(\eta)),
\end{equation}
if  $s(\eta)=o(n_1)$. (It was the RHS  in this formula that prompted us to come up
with $k(\bold n)=n_1^2 f(s(\bold n))$.) Since $f(x)$ decreases with $x$, we hope for $s(\eta)$ to be 
asymptotic to 
\[
s^{\pm}(\bold n)=\frac{s(n)}{1\pm\delta(\bold n)},
\]
for $=k^{\pm}(\bold n)$. Let us wait a bit more before we settle on the exact formulas for $k^{\pm}(\bold n)$.

As $s(\eta)=n_1-1-\eta(n_1-1-s)$, we are thus led--informally, needless to say--to
\[
\eta^{\pm}(s)=\frac{n_1-1-s^{\pm}}{n_1-1-s},
\]
for $s$ such that $\eta^+(s)\ge1$,  and $\eta^-(s)\le 1$, respectively; otherwise, we use the fall-back
choices $\eta^+(s)=1$ and $\eta^-(s)=1$, respectively. To summarize, 
we have defined
\begin{equation*}
\eta^-(s)=\!\left\{\!\begin{aligned}
&1,&& s\in [s^-,s^*],\\
&\frac{n_1-1-s^-}{n_1-1-s},&&s<s^-;\end{aligned}\right.
\quad
\eta^+(s)=\!\left\{\!\begin{aligned}
&\frac{n_1-1-s^+}{n_1-1-s},&& s\in [s^+,s^*],\\
&1,&&s<s^+.\end{aligned}\right.
\end{equation*}
 \\

{\bf The ``$+$'' case.\/}  Suppose that $s\in [s^+,s^*]$. Then $s(\eta^{+}(s))=s^+$, [see \eqref{s(xi,eta)=},
\eqref{Ipm<Jpm}], and therefore $\mathcal H_{k^+}(s,\eta^+(s))=H^+(s)$,
\begin{align*}
H^+(s)&:=(n_1-k^+)\log\frac{n_1-1-s^+}{n_1-1-s}+n_1\log\frac{1-e^{-s^+}}{s^+}\\
&\quad+(n_1-1)\log s-(n_2-n_1)s.
\end{align*}
The function $H^+(s)$ is concave and its derivative
\[
\frac{dH^+(s)}{ds}=\frac{n_1-k^+}{n_1-1-s}+\frac{n_1-1}{s}-(n_2-n_1)
\]
vanishes at $s=s^+$ if we {\it define\/} 
\begin{equation}\label{k^+=}
\begin{aligned}
&k^+(\bold n)=n_1+(n_1-1-s^+(\bold n))\left(\frac{n_1-1}{s^+(\bold n)}-(n_2-n_1)\right)\\
&=n_1+n_1(n_1-1-s^+(\bold n))\left(f(s(\bold n))+\frac{(1-n_1^{-1})\delta(\bold n)-n_1^{-1}}{s(\bold n)}\right).
\end{aligned}
\end{equation}
Now, by the definition of $\delta(\bold n)$,
\begin{equation}\label{delta^*=}
\begin{aligned}
\delta^*(\bold n)&:=\frac{\delta(\bold n)}{s(\bold n)f(s(\bold n))}
=\left\{\begin{aligned}
&O\bigl(s(\bold n)^{-b}\bigr),&&\text{if } s(\bold n)\to\infty,\\
&O\bigl(n_2n_1^{-1-a}\bigr),&&\text{if } s(\bold n)=O(1),\end{aligned}\right.
\end{aligned}
\end{equation}
where $b<1$ and $a<1/2$. Since $n_2\le n_1^{3/2-\sigma}$, this $n_2n_1^{-1-a}$ tends to zero,
provided that, in addition, $a>1/2-\sigma$, which we assume. So from \eqref{k^+=} we have 
\[
k^+(\bold n)\le \bigl(1+1.01\delta^*(\bold n)\bigr)n_1^2f(s(\bold n)).
\]
Since $d^2H^+/ds^2< -n_1/s^2$, we have then: for $s\ge s^+$,
\begin{align*}
H^+(s)&\le \psi_1(s^+) -\frac{n_1}{2}\left(\frac{s-s^+}{s}\right)^2,\\
\psi_1(s)&=(n_1-1)\log\bigl(1-e^{-s}\bigr)-(n_2-n_1)s.
\end{align*}
There is still the remainder term $O\bigl(\eta^{+}\min(n_1^{\gamma},s)\bigr)$ in \eqref{Ipm<Jpm}.
It is easy to check that, for $s\le s^*=n_1-n_1^{\rho}$, we have
 \[
 -\frac{n_1}{2}\left(\frac{s-s^+}{s}\right)^2+O\bigl(\eta^{+}\min(n_1^{\gamma},s)\bigr)
\le A s(\bold n),
\]
for some constant $A=A(\rho,\gamma)$, provided that $\rho>\gamma$. And this condition can be met, because up to now there were only two conditions on $\rho$ and $\gamma$: $\rho\in (0,1)$
and $\gamma\in (1/2,1)$. 
Only a minor modification of the argument from \eqref{psi(s1(n)=} to \eqref{psi(t)<} is needed to conclude
that 
\begin{equation}\label{s>s+}
\begin{aligned}
&\int\limits_{s^+}^{s^*}\!\exp\Bigl(\!\mathcal H_{k^+}(s,\eta^{+})
+O\bigl(\eta^{+}\min(n_1^{\gamma},s)\bigr)\!\Bigr) ds
\le n_1e^{\psi_1(s^+)+A s(\bold n)}\\
&\le\left\{\begin{aligned}
&\exp\left(\!-0.49(n_2-n_1)s(\bold n)^{2(1-b)}\right)\binom{n_2}{n_1}^{-1},&&\text{if } s(\bold n)\to\infty,\\
&\exp\left(-\theta\bigl(n_1^{1-2a}\bigr)\right)\binom{n_2}{n_1}^{-1},&&\text{if } s(\bold n)=O(1).\end{aligned}\right.
\end{aligned}
\end{equation}
Continuing with the ``$+$'' case,  suppose that $s\in (0,s^+)$. Then we have $\eta^+(s)=1$, so that
$s(\eta^{+}(s))=s$, and $\mathcal H_{k+}(s,1)=\psi_1(s)$. On $(0,s^+]$ the function $\psi_1(s)$ attains its maximum at $s^+$. So the contribution of this interval to $J^+(C_0)$  in \eqref{Ipm<Jpm} is at most the bottom bound in \eqref{s>s+}. Thus the total contribution of $C_0$ to $\binom{n_2}{n_1}n_1!\,\mathcal P^+(k^+)$ is, order-wise, below the same bound.
And we recall that the contribution of the peripheral domain $C_1\cup C_2$ is $e^{-c^*n_1}$, at most.
So, by \eqref{P(maxR>)}, we obtain
\begin{equation}\label{P(R(M)>)<}
\begin{aligned}
&\pr\Bigl(\max_{\mathcal M}R(\mathcal M)\ge \bigl(1+1.01\delta^*(\bold n)\bigr)n_1^2f(s(\bold n))\Bigr)\\
&\le\left\{\begin{aligned}
&\exp\left(\!-0.49(n_2-n_1)s(\bold n)^{2(1-b)}\right),&&\text{if } s(\bold n)\to\infty,\\
&\exp\left(-\theta\bigl(n_1^{1-2a}\bigr)\right),&&\text{if } s(\bold n)=O(1).
\end{aligned}\right.
\end{aligned}
\end{equation}

{\bf The ``$-$'' case.\/} Suppose that $s\in [s^-,s^*]$. Then $\eta^{-}(s)=1$, $s(\eta^-(s))=s$, and therefore
the contribution of $[s^-,s^*]$ to $J^-(C_0)$ is, at most,
\begin{equation*}
\int_{s^-}^{s^*}\!\!\exp\left(\psi_1(s)+O\bigr(\min(n_1^{\gamma},s)\bigr)\right) ds.
\end{equation*}
On $[s^-,s^*]$, $\psi_1(s)$ attains its maximum at $s^-=\tfrac{s(\bold n)}{1-\delta(\bold n)}$, and similarly to the ``$+$'' case, the integral is bounded by the bottom bound in \eqref{s>s+}.

Suppose finally that $s\in (0,s^-)$; this is where we will choose $k^-(\bold n)$. Then $s(\eta^{-}(s))=s^-$, and therefore
\begin{align*}
\mathcal H_{k^-}(s,\eta^-(s))&=H^{-}(s) + O(1),\\ 
H^{-}(s)&:=(n_1-k^-)\log\frac{n_1-1-s^-}{n_1-1-s}+n_1\log\frac{1-e^{-s^-}}{s^-}\\
&\quad+(n_1-1)\log s-(n_2-n_1)s.
\end{align*}
Here the derivative
\[
\frac{dH^{-}(s)}{ds}=\frac{n_1-k^-}{n_1-1-s}+\frac{n_1-1}{s}-(n_2-n_1)
\]
vanishes at $s^-$ if we define
\begin{equation}\label{k^-}
\begin{aligned}
&k^-(\bold n)=n_1+(n_1-1-s^-(\bold n))\left(\frac{n_1-1}{s^-(\bold n)}-(n_2-n_1)\right)\\
&=n_1+n_1(n_1-1-s^-(\bold n))\left(f(s(\bold n))-\frac{(1-n_1^{-1})\delta(\bold n)+n_1^{-1}}{s(\bold n)}\right).
\end{aligned}
\end{equation}
 So from \eqref{k^-} we have
\[
k^-(\bold n)>\bigl(1-1.01\delta^*(\bold n)\bigr)n_1^2f(s(\bold n)).
\]
Since $H^{-}(s)$ is concave and its derivative at $s^-$ is zero, it attains maximum at $s^-$, which
equals
\[
n_1\log\frac{1-e^{-s^-}}{s^-}+(n_1-1)\log s^{-}-(n_2-n_1)s^-\le \psi_1(s^-).
\] 
Therefore the contribution of $[0,s^-]$ to $J^-(C_0)$ is below 
the bottom RHS in \eqref{s>s+}.  

We conclude that
\begin{equation}\label{P(R(M)<)<}
\begin{aligned}
&\pr\Bigl(\min_{\mathcal M}R(\mathcal M)\le \bigl(1-1.01\delta^*(\bold n)\bigr)n_1^2f(s(\bold n))\Bigr)\\
&\le\left\{\begin{aligned}
&\exp\left(\!-0.49(n_2-n_1)s(\bold n)^{2(1-b)}\right),&&\text{if } s(\bold n)\to\infty,\\
&\exp\left(-\theta\bigl(n_1^{1-2a}\bigr)\right),&&\text{if } s(\bold n)=O(1).
\end{aligned}\right.
\end{aligned}
\end{equation}
Combining \eqref{P(R(M)<)<} and \eqref{P(R(M)>)<}, we complete the proof of Theorem \ref{R(M)appr}.

\section{Rotations exposed in random stable matchings}  For $n_1=n_2$, Irving and Leather 
\cite{IrvLea} proved  the following deep result. For every 
stable matching $\mathcal M$ different from the men-optimal stable matching $\mathcal M_1$ 
there exists a sequence of stable matchings $\{\mathcal M^{(j)}\}_{1\le j\le t}$ with $\mathcal M^{(1)}=\mathcal M_1$ and $\mathcal M^{(t)}=\mathcal M$ such that each $\mathcal M^{(j+1)}$ is obtained
from $\mathcal M^{(j)}$ via a rotation step. 
It involves a cyclically ordered sequence of pairs $(m_i,w_i)$, $i\in [r]$, matched in $\mathcal M^{(j)}$, such that each woman $w_{i+1}$ is the best choice for the man
$m_i$ among women to whom he prefers his wife $w_i$, and who prefer $m_i$ to their husbands in $\mathcal M_j$. Pairing each $m_i$ with $w_{i+1}$  we obtain the next stable matching $M^{(j+1)}$, in which each woman $w_i$ gets a better husband $m_{i-1}$, and all other women keep their husbands  
unchanged. As Rob Irving  pointed out \cite{Irv} this theorem holds for the case of
$n_1\neq n_2$ as well. Thus, once we bound the expected total length of the rotations in all
the stable matchings, we will obtain an upper bound for the expected number of all members with more than one stable partner.

Let $(\mathcal M,\mathcal M')$ be a given pair of matchings, with the same set of $n_1$ women,
such that $\mathcal M'$ is obtained from $\mathcal M$ by breaking up some pairs $(m_1,w_1),\dots,
(m_r,w_r)$ in $\mathcal M$ and pairing $m_i$ with $w_{i+1}$, ($w_{r+1}:=w_1$).  Let $A$ denote the
event that $\mathcal M$ is stable and $\{(m_i,w_i)\}$ is a rotation in $\mathcal M$, so that
$\mathcal M'$ is stable as well. By symmetry, $\pr(A)$ depends only on $\bold n=(n_1,n_2)$, so we
denote it $P(\bold n,r)$.

\begin{Lemma}\label{P(nr)=} 
\[
P(\bold n,r)=\!\overbrace {\idotsint}^{2n_1}_{\bold x,\,\bold y\in [0,1]^{n_1}}\prod_{k=1}^rx_ky_k\,\prod_{ i\neq j}(1-x_iy_j)\prod_h\bar x_h^{n_2-n_1}\,d\bold x d\bold y;
\]
in the second product $j\neq i+1(\text{mod }r)$ for $1\le i\le r$.
\end{Lemma}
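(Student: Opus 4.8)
The plan is to run the conditioning argument from the proof of Lemma~\ref{extbasic}. By relabelling men and women we may assume $\mathcal M$ is the identity injection, $\mathcal M(i)=i$ for $i\in[n_1]$, that the rotation men and women are $1,\dots,r$ with $(m_i,w_i)=(i,i)$, and that $w_{i+1}$ is the woman $i+1$ (the labels $1,\dots,r$ read cyclically, so $w_{r+1}=w_1$), so that $\mathcal M'$ pairs man $i$ with woman $i+1$ for $i\le r$ and agrees with $\mathcal M$ otherwise. Relabelling does not change $\pr(A)$, so this probability depends only on $\bold n$ and $r$, and we call it $P(\bold n,r)$. We work on the background space of the proof of Lemma~\ref{extbasic}: the entries $X_{i,j},Y_{i,j}$ ($i\le n_1$, $j\le n_2$) are independent $[0,1]$-uniform, man $i$'s list is row $i$ of $\bold X$ read in increasing order, woman $j$'s list is column $j$ of $\bold Y$ read in increasing order.

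First I would translate $A$ into conditions on the entries. With probability one, $A$ holds iff: (i) $\mathcal M$ is stable, i.e.\ $X_{i,s}>X_{i,i}$ for all $i\le n_1<s$, and $X_{i,j}>X_{i,i}$ or $Y_{i,j}>Y_{j,j}$ for all $i\ne j$ in $[n_1]$; and (ii) for every $i\le r$, woman $i+1$ is man $i$'s most preferred among the women he ranks below his wife and who prefer him to their $\mathcal M$-partner, an unmatched woman counting here as preferring every man. The key point is that once (i) holds, a matched woman $w\ne i$ is such a candidate for man $i$ exactly when $Y_{i,w}<Y_{w,w}$ (stability then forces $X_{i,w}>X_{i,i}$), while every unmatched woman is automatically a candidate; hence, on intersecting (ii) with the stability conditions bearing on the same entries, the requirements on ``row $i$'' for $i\le r$ collapse to
\[
X_{i,i}<X_{i,i+1},\qquad Y_{i,i+1}<Y_{i+1,i+1},\qquad X_{i,s}>X_{i,i+1}\ (s>n_1),\qquad \neg\bigl[X_{i,w}<X_{i,i+1},\ Y_{i,w}<Y_{w,w}\bigr]\ (w\ne i,i+1,\ w\le n_1),
\]
while for $i>r$ the requirements on row $i$ are just the stability conditions involving that row.

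Next I would condition on the $2n_1$ values $x_i:=X_{i,i+1}$ for $i\le r$, $x_i:=X_{i,i}$ for $i>r$, and $y_j:=Y_{j,j}$ for $j\le n_1$; these are distinct entries, hence independent and uniform, and conditionally the remaining entries are still i.i.d.\ uniform. As in the proof of Lemma~\ref{extbasic}, the events attached to distinct rows involve disjoint families of these remaining entries, so $\pr(A\mid\bold x,\bold y)$ factorizes over rows. A row $i>r$ contributes $\bar x_i^{n_2-n_1}\prod_{j\ne i}(1-x_iy_j)$ exactly as before, while a row $i\le r$ contributes $x_iy_{i+1}\bar x_i^{n_2-n_1}\prod_{w\ne i,i+1}(1-x_iy_w)$ -- the four factors coming from the four displayed conditions in turn ($x_i=\pr(X_{i,i}<x_i)$, $y_{i+1}=\pr(Y_{i,i+1}<y_{i+1})$, $\bar x_i^{n_2-n_1}$ from the $n_2-n_1$ conditions $X_{i,s}>x_i$, and each $1-x_iy_w$ from a negated event). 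Integrating the product of the row factors over $(\bold x,\bold y)\in[0,1]^{2n_1}$, collecting one $\bar x_h^{n_2-n_1}$ per row, the factors $(1-x_iy_j)$ over all $i\ne j$ except $(i,i+1)$ with $i\le r$ (cyclically), and the extra $x_iy_{i+1}$ from the rotation rows, and using $\prod_{i\le r}x_i\prod_{i\le r}y_{i+1}=\prod_{k=1}^r x_ky_k$ (the cyclic shift $i\mapsto i+1$ being a bijection of $\{1,\dots,r\}$), we obtain the stated formula.

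The fussiest step is the reduction (i)--(ii): one must check that ``$w_{i+1}$ is the best candidate for man $i$'', once intersected with stability of $\mathcal M$, is exactly the four displayed conditions on each rotation row. The decisive features are that for a rotation man the unmatched women impose $X_{i,s}>X_{i,i+1}$ rather than merely $X_{i,s}>X_{i,i}$, and that $X_{i,i}$ survives only through the single constraint $X_{i,i}<X_{i,i+1}$; together these are what install the extra factor $x_i$ (exactly one per rotation man) in the integrand while leaving $\bar x_i^{n_2-n_1}$ intact. The remainder is bookkeeping already carried out for Lemma~\ref{extbasic}.
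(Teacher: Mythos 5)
Your proof is correct and is essentially the paper's own argument: the paper likewise reduces to the identity injection with rotation pairs $(1,1),\dots,(r,r)$, conditions on $X_{i,i+1}=x_i$ for $i\le r$, $X_{i,i}=x_i$ for $i>r$, and $Y_{j,j}=y_j$, and reads the integrand as the resulting conditional probability (citing the balanced-case Lemma 3.2(a) of \cite{Pit2} for the details you spell out). Your explicit verification of the row-by-row collapse of the stability-plus-rotation conditions, including the $\bar x_i^{n_2-n_1}$ factor coming from $X_{i,s}>X_{i,i+1}$ for the unmatched women, matches exactly what the paper's formula encodes.
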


\begin{proof} For $n_1=n_2$ this is Lemma 3.2 (a) in \cite{Pit2}.
Here $\mathcal M$ is the identity mapping from the men set $[n_1]$ to the
women set $[n_1]$, and the rotation candidate is formed by the (man, woman) pairs  $(1,1),\dots, (r,r)$. The
integrand is the probability that $\mathcal M$ is stable and that the sequence $(1,1),\dots, (r,r)$ is
indeed a rotation exposed in $\mathcal M$, {\it conditioned\/} on the event
\begin{align*}
&X_{i,i+1}=x_i,\,\,(1\le i\le r),\,(X_{r,r+1}:=X_{r,1});\quad X_{i,i}=x_i,\,\,r+1\le i\le n_1,\\
&\qquad\qquad\qquad\qquad\quad Y_{j,j}=y_j,\,\, (1\le j\le n_1).
\end{align*}
\end{proof}
\begin{Corollary}\label{P(n,r)<,1} Denoting $s=\sum_{i\in [n_1]}x_i$, and
\[
E_1(u):=\frac{1-e^{-u}}{u},\quad E_2(u):=\frac{1-e^{-u}(1+u)}{u^2},
\]
we have 
\begin{align}
P(\bold n,r)&\le_b \overbrace {\idotsint}^{n_1}_{\bold x\in [0,1]^{n_1}}E_2(s)^r\,E_1(s)^{n_1-r}
e^{-(n_2-n_1)s}\left(\prod_{i=1}^r x_i\right)d\bold x\notag\\
&=\frac{1}{(n_1+r-1)!}\int_0^{n_1}E_2(s)^r\,E_1(s)^{n_1-r}
e^{-(n_2-n_1)s}s^{n_1+r-1}\,ds\notag\\
&=\frac{1}{(n_1+r-1)!}\int_0^{n_1}\! s^{-1}\!\left(\!\frac{e^s-1-s}{e^s-1}\!\right)^r \!(1-e^{-s})^{n_1}e^{-(n_2-n_1)s}\,ds.
\label{P(n,r)<int}
\end{align}
\end{Corollary}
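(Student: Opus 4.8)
The plan is to pass from the $2n_1$-fold integral of Lemma \ref{P(nr)=} to the one-dimensional integral \eqref{P(n,r)<int} in four moves: linearise the integrand in $\bold y$ and integrate $\bold y$ out; collapse the resulting shifted arguments to $s=\sum_{i\le n_1}x_i$; reduce the surviving $\bold x$-integral to a single integral over $s$; and simplify algebraically. For the first move, I would use $1-t\le e^{-t}$ on every factor $1-x_iy_j$ and on each $\bar x_h=1-x_h$; the exponential product factors over $j$, and $\prod_h\bar x_h^{n_2-n_1}\le e^{-(n_2-n_1)s}$. Integrating over $\bold y\in[0,1]^{n_1}$ then gives
\[
P(\bold n,r)\le \idotsint_{\bold x\in[0,1]^{n_1}}e^{-(n_2-n_1)s}\Bigl(\prod_{k=1}^rx_k\Bigr)\prod_{j=1}^rE_2(\sigma_j)\prod_{j=r+1}^{n_1}E_1(\sigma_j)\,d\bold x,
\]
where $\sigma_j$ is the sum of the $x_i$ over the $i$'s surviving in the inner product for that $j$: $\sigma_j=s-x_j$ for $j>r$, while for $j\le r$ one also drops the cyclic predecessor, $\sigma_j=s-x_j-x_{j-1}$ (indices mod $r$). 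The factor $y_j$ present only for $j\le r$ (from $\prod_kx_ky_k$) is exactly what replaces $\int_0^1e^{-y_j\sigma_j}\,dy_j=E_1(\sigma_j)$ by $\int_0^1y_je^{-y_j\sigma_j}\,dy_j=E_2(\sigma_j)$, while the factors $x_k$ ($k\le r$) come out of the $\bold y$-integration unchanged.

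For the second move I would invoke \eqref{log,easy}, i.e. $(\log E_1)'(u)\ge-\min\{\tfrac12,\tfrac1u\}$, together with the analogous bound $(\log E_2)'(u)\ge-\min\{c,\tfrac cu\}$ for a suitable absolute constant $c$ (immediate from $(\log E_2)'(u)=\tfrac{ue^{-u}}{1-e^{-u}(1+u)}-\tfrac2u$, which is negative, stays bounded, and is $O(1/u)$ as $u\to\infty$). Integrating each of these from $\sigma_j$ to $s$ and summing over $j$ yields
\[
\sum_{j>r}\log\frac{E_1(\sigma_j)}{E_1(s)}+\sum_{j\le r}\log\frac{E_2(\sigma_j)}{E_2(s)}\le C
\]
for an absolute $C$, since $\sum_{j>r}(s-\sigma_j)\le\sum_ix_i=s$ and $\sum_{j\le r}(s-\sigma_j)=2\sum_{k\le r}x_k\le 2s$. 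Hence $\prod_{j=1}^rE_2(\sigma_j)\prod_{j=r+1}^{n_1}E_1(\sigma_j)\le_b E_2(s)^rE_1(s)^{n_1-r}$, turning the display above into the first integral of the Corollary. The one delicate point is that only a bounded number of indices can have $s-\sigma_j$ comparable to $s$ (which in fact forces $s$ itself to be bounded, as $x_j\le 1$), and for those few indices the $\tfrac12$- and $c$-parts of the derivative bounds already give a bounded contribution; for the remaining indices the estimate is a routine telescoping one controlled by $\sum_ix_i=s$.

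For the third and fourth moves, I would note that the integrand of that first integral depends on $\bold x$ only through $s$ and $\prod_{k\le r}x_k$. Applying $\int_0^1f(t)\,t\,dt=\iint_{u,v\ge0,\,u+v\le1}f(u+v)\,du\,dv$ to each of the $r$ factors $x_k$ replaces them by $r$ pairs of coordinates summing to $x_k$; the integral becomes $\int_0^{n_1}g(s)\,h(s)\,ds$ with $g(s)=E_2(s)^rE_1(s)^{n_1-r}e^{-(n_2-n_1)s}$, where $h(s)$ is the density, at $s$, of the sum of the $n_1+r$ coordinates integrated over a subset of $[0,1]^{n_1+r}$; by \eqref{fn(s)<} of Lemma \ref{densities}, $h(s)\le s^{n_1+r-1}/(n_1+r-1)!$. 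This produces the second integral in the Corollary (with the first ``$=$'' read as ``$\le$'', harmless under $\le_b$), and the third is the purely algebraic identity $E_2(s)^rE_1(s)^{n_1-r}s^{n_1+r-1}=s^{-1}\bigl(\tfrac{e^s-1-s}{e^s-1}\bigr)^r(1-e^{-s})^{n_1}$, which reduces to $\tfrac{1-e^{-s}(1+s)}{1-e^{-s}}=\tfrac{e^s-1-s}{e^s-1}$.

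The main obstacle I expect is the second move: showing that the shifted factors $E_1(\sigma_j),E_2(\sigma_j)$ cost only an absolute constant relative to $E_1(s),E_2(s)$ uniformly over the entire cube $[0,1]^{n_1}$, including the corner where a few coordinates carry almost all the mass of $s$. Everything else — the exponential linearisation, the pairing substitution, the appeal to the Irwin–Hall bound \eqref{fn(s)<}, and the final algebra — is bookkeeping.
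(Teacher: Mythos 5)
Your proposal is correct and follows essentially the same route as the paper, which simply defers to the derivation of bound (5.8) in \cite{Pit2}: linearize via $1-\alpha\le e^{-\alpha}$, integrate out $\bold y$ to get the $E_1,E_2$ factors, absorb the shift from $\sigma_j$ to $s$ at a bounded multiplicative cost via \eqref{log,easy}-type derivative bounds, and reduce to a univariate integral through the simplex/density bound of Lemma \ref{densities}. Your observation that the paper's first ``$=$'' should be read as ``$\le$'' over the cube $[0,1]^{n_1}$ (exact equality would hold over the orthant restricted to $s\le n_1$) is accurate and harmless under $\le_b$.
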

\begin{proof} The proof  mimics the derivation of the bound (5.8), and its subsequent transformation into a univariate integral, in \cite{Pit2}.
\end{proof} 

The second factor in the integrand is below $1$. So from the proof of Theorem \ref{E(S(n))sim} in Section \ref{P(n)close} the integral in \eqref{P(n,r)<int} is asymptotically at most 
\begin{equation}\label{intsim}
s(\bold n)^{-1}\int_0^{n_1}(1-e^{-s})^{n_1}e^{-(n_2-n_1)s}\,ds=\frac{1+o(1)}{s(\bold n)(n_2-n_1)}\binom{n_2}{n_1}^{-1},
\end{equation}
$s(\bold n)=\log\frac{n_2}{n_2-n_1}$. Now the total number of injections of $[n_1]$ into $[n_2]$ is $\binom{n_2}{n_1}n_1!$, and the total 
number of cyclic sequences of $r$ matched pairs in an injection is $\binom{n_1}{r}(r-1)!=\tfrac{(n_1)_r}{r}$. Let $\mathcal R(\bold n)$ be the total {\it length\/} of all the rotations exposed in all the stable matchings. By \eqref{P(n,r)<int} and \eqref{intsim}, we have then
\begin{align*}
&\ex\bigl[\mathcal R(\bold n)\bigr]\le_b \frac{1}{s(\bold n)(n_2-n_1)}\sum_{r\ge 2}\frac{n_1!(n_1)_r}{(n_1+r-1)!}\\
&=\frac{n_1}{s(\bold n)(n_2-n_1)}\sum_{r\ge 2}\frac{(n_1)_r}{(n_1+r-1)_{r-1}}\\
&\le\frac{n_1}{s(\bold n)(n_2-n_1)}\sum_{r\ge 2}\exp\left(-\frac{r^2}{n_1}\right)
\le_b \frac{n_1^{3/2}}{s(\bold n)(n_2-n_1)}.
\end{align*}
Now $n_1^{-1}R(\bold n)$ is certainly an upper bound for both $m(\bold n)$, the {\it fraction\/}  of men, and 
$w(\bold n)$, the {\it fraction\/} of women, with more than one stable partner. It is easy to check that,
given $n_1$, the denominator $s(\bold n)(n_2-n_1)$ is strictly increasing with $n_2$, and that
\[
n_2-n_1=\bigl[n_1^{1/2} (\log n_1)^{-\gamma}\bigr],\,\,(\gamma<1)\Longrightarrow\frac{n_1^{1/2}}{s(\bold n)(n_2-n_1)}\to 0.
\]
Thus we proved
\begin{Theorem}\label{frctns,1} If $n_2-n_1\ge n_1^{1/2} (\log n_1)^{-\gamma}$, $(\gamma<1)$, then
$\lim m(\bold n) =\lim w(\bold n)=0$, in probability.
\end{Theorem}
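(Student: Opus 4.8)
The plan is to bound the \emph{expected total length} $\ex[\mathcal R(\bold n)]$ of all rotations exposed across all stable matchings, and then to convert this into the statement about fractions of agents by a first–moment argument. The reduction rests on the Irving--Leather structure theorem (valid for $n_1\neq n_2$ by Irving's observation): any stable matching is reached from the men–optimal $\mathcal M_1$ by a chain of rotation steps, so an agent has more than one stable partner precisely when it is moved by some rotation exposed along such a chain. Counting each moved man (resp. woman) at most once per rotation that moves it gives $n_1 m(\bold n)\le \mathcal R(\bold n)$ and $n_1 w(\bold n)\le \mathcal R(\bold n)$, hence it suffices to prove $n_1^{-1}\ex[\mathcal R(\bold n)]\to 0$; since $m(\bold n),w(\bold n)\ge 0$, convergence in mean to $0$ yields convergence in probability.

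For the first moment I would argue as follows. By symmetry the probability $P(\bold n,r)$ that a fixed cyclically ordered $r$–tuple of matched pairs is a rotation exposed in a fixed stable injection is the $2n_1$–dimensional integral of Lemma~\ref{P(nr)=}, which Corollary~\ref{P(n,r)<,1} collapses to
\[
P(\bold n,r)\le_b \frac{1}{(n_1+r-1)!}\int_0^{n_1}\! s^{-1}\!\left(\frac{e^s-1-s}{e^s-1}\right)^{\!r}(1-e^{-s})^{n_1}e^{-(n_2-n_1)s}\,ds .
\]
The factor raised to the power $r$ is at most $1$, so by the integrand–concentration analysis from the proof of Theorem~\ref{E(S(n))sim} in Section~\ref{P(n)close} this integral is $\le_b\bigl(s(\bold n)(n_2-n_1)\binom{n_2}{n_1}\bigr)^{-1}$, uniformly in $r$. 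Multiplying by the number $\binom{n_2}{n_1}n_1!$ of injections and by the number $\binom{n_1}{r}(r-1)!=(n_1)_r/r$ of cyclic $r$–tuples, and weighting each such tuple by its length $r$, I obtain
\[
\ex[\mathcal R(\bold n)]\le_b \frac{1}{s(\bold n)(n_2-n_1)}\sum_{r\ge2}\frac{n_1!\,(n_1)_r}{(n_1+r-1)!}
= \frac{n_1}{s(\bold n)(n_2-n_1)}\sum_{r\ge2}\prod_{j=1}^{r-1}\frac{n_1-j}{n_1+j}.
\]
Using $\prod_{j=1}^{r-1}\frac{n_1-j}{n_1+j}\le e^{-r^2/n_1}$ and $\sum_{r\ge2}e^{-r^2/n_1}\le_b n_1^{1/2}$ gives $\ex[\mathcal R(\bold n)]\le_b n_1^{3/2}/\bigl(s(\bold n)(n_2-n_1)\bigr)$.

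Finally I would divide by $n_1$: $\ex[m(\bold n)],\ex[w(\bold n)]\le_b n_1^{1/2}/\bigl(s(\bold n)(n_2-n_1)\bigr)$. Since $s(\bold n)(n_2-n_1)=(n_2-n_1)\log\frac{n_2}{n_2-n_1}$ is increasing in $n_2$ for fixed $n_1$, the extreme case under the hypothesis is $n_2-n_1=n_1^{1/2}(\log n_1)^{-\gamma}$, where $s(\bold n)\sim\tfrac12\log n_1$ and the denominator is $\sim\tfrac12 n_1^{1/2}(\log n_1)^{1-\gamma}$, so the ratio is $O\bigl((\log n_1)^{-(1-\gamma)}\bigr)\to0$ because $\gamma<1$. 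Markov's inequality then finishes the proof. The only genuinely delicate point is the first step---asserting that the aggregate rotation length dominates the count of agents with multiple stable partners, which rests on the Irving--Leather lattice theorem and its validity in the unbalanced regime; everything afterwards is first–moment bookkeeping plus a direct reuse of the integral asymptotics already established in Section~\ref{P(n)close}.
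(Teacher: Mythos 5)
Your proposal is correct and follows essentially the same route as the paper: the Irving--Leather rotation reduction (valid for $n_1\neq n_2$), the bound from Lemma \ref{P(nr)=} and Corollary \ref{P(n,r)<,1} with the factor $\bigl(\tfrac{e^s-1-s}{e^s-1}\bigr)^r\le 1$ absorbed into the Section \ref{P(n)close} asymptotics, the same counting and falling-factorial sum giving $\ex[\mathcal R(\bold n)]\le_b n_1^{3/2}/\bigl(s(\bold n)(n_2-n_1)\bigr)$, and the same monotonicity-plus-Markov finish. The only (harmless) blemishes are cosmetic: the exponent bound should read $\exp(-r(r-1)/n_1)$ rather than $\exp(-r^2/n_1)$, which does not affect the $O(n_1^{1/2})$ estimate of the sum.
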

\bi
{\bf Aknowledgment.\/} I owe debt of genuine gratitude to Jennifer Chayes for suggesting that the breakthrough results  in \cite{AshKanLes} might warrant a follow-up research.
I am grateful to Itai Ashlagi and Yash Kanoria for their encouraging interest
in this work. I thank Rob Irving for his patient explanation of why the fruitful notion of rotations
survives the transition from the classic case $n_1=n_2$ to the more general case $n_1\neq n_2$.


\begin{thebibliography}{99}
\bibitem{AshKanLes}
I. Ashlagi, Y. Kanoria and J. D. Leshno, \textit{Unbalanced random matching markets: the stark effect of competition}, J Polit. Economy, forthcoming. Online Appendices A, B, C. Available at http://web.stanford.edu/~iashlagi/
\bibitem{Cha}
J. Chayes, \textit{Personal communication.}
\bibitem{Fel}
W. Feller, \textit{An introduction to probability theory and its applications, 2nd edition}, (1971).
\bibitem{GalSha}
D. Gale and L. S. Shapley, \textit{College admissions and the stability of marriage}, Amer Math
Monthly \textbf{69} (1962) 9--15.
\bibitem{GusIrv}
D. Gusfield and R. W. Irving, \textit{The stable marriage problem, Structure and algorithms}, Foundations of Computing Series (1989).
\bibitem{Irv}
R. W. Irving, \textit{Personal communication.}
\bibitem{Kal}
G. Kalai, \textit{Combinatorics and more}, https://gilkalai.wordpress.com/.
\bibitem{IrvLea}
R. W. Irving and P. Leather, \textit{The complexity of counting stable marriages}, SIAM J Comput
\textbf{15} (1986) 655--667.
\bibitem{Knu}
D. E. Knuth, \textit{Stable marriage and its relation to other combinatorial problems: an introduction
to the mathematical analysis of algorithms}, CRM Proceedings and Lecture notes (1996).
\bibitem{KnuMotPit}
D. E. Knuth, R. Motwani and B. Pittel, \textit{Stable husbands}, Random Struct Algorithms
\textbf{1} (1990) 1--14.
\bibitem{LenPit}
C. Lennon and B. Pittel, \textit{On the likely number of solutions for the stable matching problem},
Combin Probab Comput \textbf{18} (2009) 371--421.
\bibitem{McVWil}
D. G. McVitie and L. B. Wilson, \textit{The stable marriage problem}, Comm ACM \textbf{14} (1971)
486--490.
\bibitem{Pit1}
B. Pittel, \textit{The average number of stable matchings}, SIAM J Disc Math \textbf{2} (1989)
530--549.
\bibitem{Pit2}
B. Pittel, \textit{On likely solutions of a stable marriage problem}, Ann Appl Probab \textbf{2} (1992)
358--401.
\bibitem{Pit3}
B. Pittel,  L. Shepp and E. Veklerov, \textit{On the number of fixed pairs in a random instance \linebreak 
of the stable marriage problem}, SIAM J. Discrete Math. \textbf{21}  (2007) 947--958.
\bibitem{Wil} 
L. B. Wilson, \textit{An analysis of the stable marriage assignment problem}, BIT \textbf{12} (1972) 569--575.

\end{thebibliography}
\end{document}